\documentclass{article}
\usepackage[top=4cm, bottom=4cm, left=3cm, right=3cm]{geometry}
\usepackage{graphics}
 \usepackage{graphicx}
 \usepackage{epsfig}
 \usepackage{amsmath}
\usepackage{amsfonts}
\usepackage{amssymb}
\usepackage{xcolor}
\usepackage{enumerate}
\usepackage{hyperref}
\hypersetup{colorlinks=true,linkcolor=blue}
\numberwithin{equation}{section}

\newtheorem{theorem}{Theorem}[section]

\newtheorem{prop}{Proposition}[section]
\newtheorem{leme}{Lemma}[section]
\newtheorem{dfnt}{Definition}[section]
\newtheorem{remark}{Remark}

\newtheorem{problem}{Problem}
\newtheorem{assumption}{Assumption}[section]

\newenvironment{proof}[1][Proof]{\textbf{#1.} }{\ \rule{0.5em}{0.5em}}

\def \dis {\displaystyle}

\def \R {\mathbb{R}}

\def \O {\mathcal{O}}

\def\dT{dx\, dt}

\def \hvarphi \widehat{\varphi}

\def\dq{dx\,dt}

\def \hvarphi \widehat{\varphi}

%

  \begin{document}
  \title{Stackelberg strategy on a degenerate parabolic equation with missing data}
 \author{{{Landry Djomegne} \thanks{{\it
 				University of Dschang, BP 67 Dschang, Cameroon, West region,
 				email~: {\sf landry.djomegne\char64gmail.com} }}}
 	{{\quad Cyrille Kenne} \thanks{{\it Laboratoire LAMIA, Universit\'e des Antilles,
 				Campus Fouillole, 97159 Pointe-{\`a}-Pitre  Guadeloupe (FWI)-	
 				Laboratoire L3MA, UFR STE et IUT, Universit\'e
 				des Antilles,97275 Schoelcher, Martinique,
 				email~: {\sf kenne853\char64gmail.com} }}}
 {{\quad Romario Foko Tiomela} \thanks{{\it Department of Mathematics, Morgan State University, Baltimore, Maryland,USA,
 			email~: {\sf romario.foko\char64morgan.edu} }}}
 }

  \date{\today}
  
  \maketitle	
  	

\begin{abstract}
This paper deals with the hierarchic control of a degenerate parabolic equation with missing initial condition. We present a Stackelberg strategy combining the concept of null controllability with low-regret control. We assume that we can act on the system through a set of hierarchic controls. The main control called the leader is in charge of the null controllability while the second control named the follower solves an optimal control problem involving a missing data. The main novelty of this work is the derivation of a new Carleman inequality for a degenerate system, which is used in a standard way to show observability inequality of the adjoint degenerate systems.\\
\end{abstract}

\noindent
\textbf{Mathematics Subject Classification}. {35K05; 35K65; 49J20; 49N30; 90C29; 93B05.}\par
\noindent
{\textbf {Key-words}}~:~Degenerate parabolic equation; Carleman inequality; Null controllability;  Incomplete data; Stackelberg control.

\section{Introduction}\paragraph{}
Let $\Omega=(0,1)$ be a bounded open set of $\R$. Let also $\mathcal{O}$ and $\omega$ be two non empty open subsets of $\Omega$ such that $\omega\varsubsetneq\O$.  For $T > 0$, we set $Q =(0, T )\times \Omega$,
$\omega_T =(0, T )\times\omega $ and $\mathcal{O}_T=(0,T)\times\mathcal{O}$. Then, we are interested in a hierarchical strategy of the following linear degenerate parabolic equation:

  \begin{equation}\label{eq}
  \left\{
  \begin{array}{rllll}
  \dis y_t-\left(k(x)y_x\right)_{x}+a_0y  &=&v\chi_{\mathcal{O}}+h\chi_{\omega}& \mbox{in}& Q,\\
  \dis y(t,0)=y(t,1)&=&0& \mbox{on}& (0,T), \\
  \dis y(0,\cdot)&=&g&\mbox{in}&\Omega,
  \end{array}
  \right.
  \end{equation}
  where $g\in L^2(\Omega)$ is the unknown initial condition, the potential $a_0\in L^\infty(Q)$ is given. We denote by $y_t$ and $y_x$ the partial derivatives of $y$ with respect to $t$ and $x$ respectively.\par 
  In the system \eqref{eq}, $y=y(t,x)=y(t,x;h;v,g)$ is the state while $v=v(t,x)$ and $h=h(t,x)$ are two different control functions  applied respectively on $\O$ and $\omega$. These functions $v$ and $h$ are the follower and leader controls respectively. Here $\chi_{\mathcal{O}}$ and $\chi_{\omega}$ are respectively the characteristic function of the control set $\mathcal{O}$ and $\omega$.\par 
  
  The system \eqref{eq} can be used to model the dispersion of a gene in a given population (invasive species for instance). In this case, $x$ represents the gene type, and $y(t,x)$ denote the density of individuals at time $t$ and of gene type $x$ \cite{younes2014}. In this paper, the function $k$ is the diffusion coefficient which depends on the gene type and degenerate at the left hand side of its domain, i.e. $k(0)=0$, (e.g $k(x)=x^\alpha,\ \alpha>0$). In this case, we say that the system \eqref{eq} is a degenerate parabolic equation. Genetically speaking, such a property of degeneracy means that if each population is not of gene type, it cannot be transmitted to its offspring \cite{younes2014}.\par
  
  The model \eqref{eq} is also called a system with incomplete data because, the information on the initial data is not completely known. The missing term in the initial condition may expresses the fact that we have no information on the density of population at the beginning of the study.
  
 In this article, unlike the single objective case, we are in a situation that we have two objectives to achieve and for that the introduction of bi-objective optimization is essential. More precisely, we use the concept of Stackelberg competition introduced in 1934 \cite{Von1934Stackelberg}. It represents a strategy  game between two firms in which one of the firms (the leader) moves first and the second firm (the follower) moves according to the leader's strategy.\par 
 
 In the framework of PDEs, the hierarchic control was introduced by J-L. Lions in 1994 \cite{Lions1994Stackelberg1, Lions1994Stackelberg2} to study a bi-objective control problem for the wave and heat equation respectively. In theses works, the author acted on the different systems with two controls. The leader solving an approximate controllability problem while the follower solves an optimal control problem. In recent years, many other researchers have used hierarchic control in the sense of Lions; see for instance \cite{djomegnebackward, kere2017coupled, mercan1, mercan2, mercan3, Nakoulima2007}. In \cite{teresa2018, liliana2020, teresabis}, the authors used the hierarchic control which combine the controllability problem with robustness.\par 
 
 Recently in 2020 \cite{romario2018}, the authors combined the concepts of hierarchic control and low-regret control on a linear heat equation with a missing initial condition. The leader was in charge of a null controllability problem while the follower solved an optimal control problem in presence of the missing data. The authors in \cite{djomegne2018} extended the previous work to a population dynamics model with an unknown birth rate. In that work, the goal of the leader was to bring the population to extinction at time $T>0$ while the follower solved an optimal control problem with missing data consisting to bring the state of the system to a desired state.\par 
 
 All the above cited works share one thing in common: they deal with hierarchic strategies associated with non degenerate systems. To the best of our knowledge, the only work dealing with the hierarchical strategy associated to degenerate systems is the one in \cite{araruna2018stackelberg}, where the authors studied the Stackelberg-Nash strategy for some linear and semi-linear degenerate parabolic equations.\par
 
 In this paper, we extend the results obtained in \cite{romario2018} to the hierarchic strategy for a degenerate parabolic equation with missing data. This has not been done before and those changes introduce additional difficulties mainly when establishing a new Carleman inequality for the degenerate system \eqref{eq} in weighted Sobolev spaces.\par
 
We assume that we have a hierarchy in our wishes and we will describe the Stackelberg strategy for system \eqref{eq}. At this level, we assume that the degenerate system \eqref{eq} is well posed. We will prove it later.\par 
   Let $\O_d\subset \Omega$ be an open set representing an observation domain of the follower. We define the follower cost functional $J_1$ by
 
  \begin{equation}\label{all16}
  J_1(h;v,g)=J(h;v,g)-J(0;0,g)-\gamma\int_{\Omega}|g|^2\ dx,
  \end{equation}
  where
  \begin{equation}\label{secon2}
  J(h;v,g)=\int_{\O_d^T}|y(h;v,g)-z_d|^2\ dxdt+\mu\int_{\O_T}|v|^2\ dxdt.
  \end{equation}
  Here, $\alpha$ and $\gamma$ are two positive constants and $z_d\in L^2(\O_d^T)$ is the desired state with $\O_d^T=(0,T)\times \O_d$.\par 
  
 We also introduce the leader functional $J_2$
  \begin{equation}\label{main}
  J_2(h)=\int_{\omega_T}|h|^2\ dxdt.
  \end{equation}
  The aim is to choose the controls $v$ and $h$ in order to achieve two different objectives:
   \begin{itemize}
   	\item The main goal is to choose $h$ minimizing the functional $J_2$ given by \eqref{main} such that the following null controllability objective holds:
   	\begin{equation}\label{obj1}
   	y(T,\cdot;h;v,g)=0\ \mbox{in}\ \Omega.
   	\end{equation}
   	\item The second goal is the following: given the function $z_d$, we want to choose the control $v$  minimizing $J_1$ given by \eqref{all16}. This means that, throughout the interval $(0,T)$,
   	\begin{equation}\label{obj2}
   	\left.
   	\begin{array}{rll}
   	&&\mbox{the solution}\ y(t,x;h;v,g)\ \mbox{of}~~\eqref{eq}~~ \mbox{remains "not too far" from the desired target}\  z_d \ \mbox{in the }\\ 
   	&&\mbox{observability domain}\  \O_d\ \mbox{ even in presence of the unknown initial condition}\  g.
   	\end{array}
   	\right.
   	\end{equation}
   \end{itemize} 
  To achieve simultaneously \eqref{obj1} and \eqref{obj2}, the control process can be described in the following two problems: 
  
  \begin{problem}\label{pb1}$ $
  	
  	Let's fix a control $h\in L^2(\omega_T)$ (leader) and let $\gamma$ be a positive constant. Find the control $v^\gamma=v^\gamma(h)\in L^2(\O_T)$ solution of the following optimization problem
  	
  	\begin{equation}\label{lowregret}
  	\inf_{v\in L^2(\O_T)}\sup_{g\in L^2(\Omega)} J_1(h;v,g),
  	\end{equation}
  	where the functional $J_1(h;v,g)$ is given by \eqref{all16}.
  \end{problem}
  
  \begin{problem}\label{pb2}$ $
  	
  	Let $v^\gamma(h)$ be the control obtained in Problem \ref{pb1} and $y^\gamma=y(t,x;h;v^\gamma(h),0)$ be the associated state. We look for an optimal control $h^\gamma\in L^2(\omega_T)$ such that 
  	\begin{equation}
  	J_2(h^\gamma)=\min_{h\in L^2(\omega_T)}J_2(h),
  	\end{equation}
  	subject to the null controllability condition
  	\begin{equation}\label{y(T)}
  	y(T,\cdot;h;v^\gamma(h),0)=0\ \mbox{in}\  \Omega.
  	\end{equation}
  \end{problem}
 
 Problem \ref{pb1} (when $h\equiv 0$) is a classical low-regret control problem which looks for a control such that a given cost functional achieves its minimum in presence of a missing data. Such control was introduced by J-L. Lions in 1992 \cite{LJL} to solve problems with missing/incomplete data. Using the notions of no-regret and low-regret control, the author proved that the solution for a low-regret problem of type \eqref{lowregret} converges to the no-regret control solution of a problem of type \eqref{lowregret} with $\gamma=0$ called no-regret problem. There are many results in the literature using these concepts of optimal control. We refer for instance to \cite{nakoulima2004, mophou2020, nakoulima2003, nakoulima2000, nakoulima2002} for non degenerate parabolic equations with incomplete data, and to \cite{Jacob2010, kenne2020} for non degenerate population dynamics models with missing data. In \cite{kenne2021}, the authors study coupled degenerate population dynamics models with missing data. However, for a quasilinear degenerate elliptic equation, see \cite{velin2004}. We also refer to \cite{baleanu, mophoulow} for time fractional diffusion equation with incomplete data.\par 
 
 Problem \ref{pb2} (when $h\equiv g\equiv 0$) is a classical null controllability problem associated with degenerate systems. Actually, after solving the first problem, the second consists in solving a null controllability problem associated to a combination of degenerate systems. The degeneracy occurs at the boundary of the space domain. To solve the controllability problems for degenerate systems, new Carleman estimates were developed for degenerate case and used to show observability inequalities of the adjoint system and then obtain the null controllability of the desired system. We refer for instance to \cite{alabau2006, birba2016, cannarsa2005, cannarsa2008, cannarsa2016, martinez2016} and the references therein.\par 
  
 In application, the hierarchic control described in this paper assume that we can act on the system at two different locations and our objectives are the following: we want to lead the system to rest at time $T$ and additionally, we wish to maintain the state of the system not too far from $z_d$ in $\mathcal{O}_d$, along $(0,T)$.
 
\subsection{Main results}
 The first result concerning the low-regret problem (i.e. Problem \ref{pb1}) is as follows:
 \begin{theorem} \label{theo1} $ $
 	
 	Let $\Omega=(0,1)$ be a bounded subset of $\R$ with $\omega $, $\mathcal{O}$ and $\O_d$ three non empty subsets of $\Omega $ with $ \omega\varsubsetneq \mathcal{O} $. Let also  $h\in L^2(\omega_T)$. Then, for any $\gamma >0$, there exist $q^\gamma\in L^2((0,T);H^1_{k}(\Omega))$ and $p^\gamma\in L^2((0,T);H^1_{k}(\Omega)) $ such that the optimization problem \eqref{lowregret} has a unique solution $v^\gamma=v^\gamma(h)\in L^2(\O_T)$	which is characterized  by the following optimality system:
 \begin{equation}\label{II.27}
 v^{\gamma}=-\frac{1}{\mu}q^\gamma\ \mbox{in}\ \O_T,
 \end{equation}
 where $y^\gamma=y(t,x;h;v^\gamma(h),0)$, $S^\gamma=S(t,x;h;v^\gamma(h))$, $p^\gamma=p^\gamma(t,x;h)$ and $q^\gamma=q^\gamma(t,x;h)$ are respectively solutions of the following optimality system:
 
 \begin{equation}\label{II.28}
 \left\{
 \begin{array}{rllll}
 \dis y_t^\gamma-\left(k(x)y_x^\gamma\right)_{x}+a_0y^\gamma &=&\dis-\frac{1}{\mu}q^\gamma\chi_{\mathcal{O}}+h\chi_{\omega}& \mbox{in}& Q,\\
 \dis y^\gamma(t,0)=y^\gamma(t,1)&=&0& \mbox{on}& (0,T), \\
 \dis y^\gamma(0,\cdot)&=&0&\mbox{in}&\Omega,
 \end{array}
 \right.
 \end{equation}
 
 \begin{equation} \label{II.29}
 \left\{
 \begin{array}{rllll}
 \dis -S_t^\gamma-\left(k(x)S_x^\gamma\right)_{x}+a_0S^\gamma &=&y^\gamma\chi_{\O_d}& \mbox{in}& Q,\\
 \dis S^\gamma(t,0)=S^\gamma(t,1)&=&0& \mbox{on}& (0,T), \\
 \dis S^\gamma(T,\cdot)&=&0&\mbox{in}&\Omega,
 \end{array}
 \right.
 \end{equation}
 
 \begin{equation} \label{II.31}
 \left\{
 \begin{array}{rllll}
 \dis p_t^\gamma-\left(k(x)p_x^\gamma\right)_{x}+a_0p^\gamma &=&0& \mbox{in}& Q,\\
 \dis p^\gamma(t,0)=p^\gamma(t,1)&=&0& \mbox{on}& (0,T), \\
 \dis p^\gamma(0,\cdot)&=&\dis\frac{1}{\sqrt{\gamma}}S^\gamma(0,\cdot)&\mbox{in}&\Omega
 \end{array}
 \right.
 \end{equation}
 and
 \begin{equation} \label{II.30}
 \left\{
 \begin{array}{rllll}
 \dis -q_t^\gamma-\left(k(x)q_x^\gamma\right)_{x}+a_0q^\gamma &=&\dis \left(y^\gamma-z_d+\frac{1}{\sqrt{\gamma}}p^\gamma\right)\chi_{\O_d}& \mbox{in}& Q,\\
 \dis q^\gamma(t,0)=q^\gamma(t,1)&=&0& \mbox{on}& (0,T), \\
 \dis q^\gamma(T,\cdot)&=&0&\mbox{in}&\Omega.
 \end{array}
 \right.
 \end{equation}
 	
 	Moreover there exists a constant $C=C(\mu)>0$ independent of $\gamma$ such that
 	\begin{equation}\label{vgamma}
 	\|v^\gamma\|_{L^2(\O_T)}\leq C\left(\|z_d\|_{L^2(\O_d^T)}+\|h\|_{L^2(\omega_T)}\right).
 	\end{equation}
 \end{theorem}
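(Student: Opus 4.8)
The plan is to reduce the inf–sup problem \eqref{lowregret} to the unconstrained minimization of an explicit \emph{low-regret} functional, solve the latter by convexity to obtain the optimality system \eqref{II.27}--\eqref{II.30}, and then extract the uniform estimate \eqref{vgamma} from an energy identity for that system rather than from any comparison argument. First I would use the linearity of \eqref{eq}: writing $y(h;v,g)=y(h;v,0)+w(g)$, where $w(g)$ solves the state equation with zero controls and initial datum $g$, and observing that $y(0;0,g)=w(g)$, I expand $J_1$ from \eqref{all16}. The quadratic-in-$g$ contributions $\int_{\O_d^T}|w(g)|^2$ arising from $J(h;v,g)$ and from $J(0;0,g)$ cancel by construction, and the cross terms collapse, leaving
\[
J_1(h;v,g)=\int_{\O_d^T}|y(h;v,0)-z_d|^2+\mu\int_{\O_T}|v|^2-\int_{\O_d^T}|z_d|^2+2\int_{\O_d^T}y(h;v,0)\,w(g)-\gamma\int_\Omega|g|^2 .
\]
Introducing the adjoint state $S=S(h;v)$ of \eqref{II.29} (with source $y(h;v,0)\chi_{\O_d}$), a weighted integration by parts against $w(g)$ converts the only $g$-dependent linear term into $\int_\Omega S(0,\cdot)\,g\,dx$. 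Thus $g\mapsto J_1$ is strictly concave, the supremum is attained at $g=\tfrac1\gamma S(0,\cdot)$, and
\[
\sup_{g\in L^2(\Omega)} J_1(h;v,g)=\int_{\O_d^T}|y(h;v,0)-z_d|^2+\mu\int_{\O_T}|v|^2+\frac1\gamma\|S(h;v)(0,\cdot)\|_{L^2(\Omega)}^2-\int_{\O_d^T}|z_d|^2=:\mathcal J^\gamma(v).
\]

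Next I would solve $\inf_v\mathcal J^\gamma(v)$. Since $v\mapsto y(h;v,0)$ and $v\mapsto S(h;v)(0,\cdot)$ are affine, $\mathcal J^\gamma$ is continuous, strictly convex through the term $\mu\|v\|^2$, and coercive, so it has a unique minimizer $v^\gamma\in L^2(\O_T)$. Differentiating along a direction $\hat v$, with linearized states $\psi$ (solving the state equation with source $\hat v\chi_{\O}$ and $\psi(0)=0$) and its adjoint $\sigma$, the Euler--Lagrange equation reads
\[
\int_{\O_d^T}(y^\gamma-z_d)\psi+\mu\int_{\O_T}v^\gamma\hat v+\frac1\gamma\int_\Omega S^\gamma(0,\cdot)\,\sigma(0,\cdot)\,dx=0 .
\]
I then introduce $p^\gamma$ as in \eqref{II.31} (forward, with $p^\gamma(0,\cdot)=\tfrac1{\sqrt\gamma}S^\gamma(0,\cdot)$) to rewrite $\tfrac1\gamma\int_\Omega S^\gamma(0)\sigma(0)=\tfrac1{\sqrt\gamma}\int_{\O_d^T}p^\gamma\psi$, and $q^\gamma$ as in \eqref{II.30} to rewrite $\int_{\O_d^T}\!\bigl(y^\gamma-z_d+\tfrac1{\sqrt\gamma}p^\gamma\bigr)\psi=\int_{\O_T}q^\gamma\hat v$; each identity is a weighted integration by parts using $\psi(0)=0$ and $\sigma(T)=q^\gamma(T)=0$. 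The condition becomes $\int_{\O_T}(q^\gamma+\mu v^\gamma)\hat v=0$ for all $\hat v$, i.e. $v^\gamma=-\tfrac1\mu q^\gamma$, which is \eqref{II.27}, and substituting back produces \eqref{II.28}. Existence of $p^\gamma,q^\gamma\in L^2((0,T);H^1_{k}(\Omega))$ follows from the well-posedness of the degenerate problems with the stated right-hand sides.

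For the $\gamma$-independent bound \eqref{vgamma} I would \emph{not} compare $\mathcal J^\gamma(v^\gamma)$ with $\mathcal J^\gamma(0)$, since the term $\tfrac1\gamma\|S(h;0)(0)\|^2$ would blow up as $\gamma\to0$; instead I test the optimality system against itself. Multiplying \eqref{II.30} by $y^\gamma$ and \eqref{II.29} by $p^\gamma$, integrating over $Q$ and combining, the mixed terms telescope through $\tfrac1{\sqrt\gamma}\int_{\O_d^T}p^\gamma y^\gamma=\tfrac1\gamma\|S^\gamma(0,\cdot)\|^2$, yielding the energy identity
\[
\frac1\mu\|q^\gamma\|_{L^2(\O_T)}^2+\|y^\gamma\|_{L^2(\O_d^T)}^2+\frac1\gamma\|S^\gamma(0,\cdot)\|_{L^2(\Omega)}^2=\int_{\omega_T}q^\gamma h+\int_{\O_d^T}z_d\,y^\gamma .
\]
Estimating the right-hand side by Cauchy--Schwarz and Young, and using $\omega\subset\O$ so that $\|q^\gamma\|_{L^2(\omega_T)}\le\|q^\gamma\|_{L^2(\O_T)}$, I absorb $\tfrac1{2\mu}\|q^\gamma\|^2_{L^2(\O_T)}$ and $\tfrac12\|y^\gamma\|^2_{L^2(\O_d^T)}$ into the left-hand side, obtaining $\|q^\gamma\|_{L^2(\O_T)}^2\le\mu^2\|h\|^2_{L^2(\omega_T)}+\mu\|z_d\|^2_{L^2(\O_d^T)}$. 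Since $v^\gamma=-\tfrac1\mu q^\gamma$, this gives \eqref{vgamma} with $C=C(\mu)$ independent of $\gamma$.

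The hard part is justifying each weighted integration by parts in the degenerate regime: the flux $k\,u_x$ must vanish at the degeneracy point $x=0$ so that no boundary contribution survives there, and all four states must genuinely belong to $L^2((0,T);H^1_{k}(\Omega))$ with enough regularity for the pairings to be meaningful. This rests on the well-posedness theory and the integration-by-parts formulas in the weighted spaces $H^1_{k}(\Omega)$ set up earlier; granting those, the cancellations above are exact and the remaining steps are routine.
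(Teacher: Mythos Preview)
Your reduction of the inf--sup to the minimization of $\mathcal J^\gamma$ and your derivation of the optimality system via Euler--Lagrange and auxiliary adjoints $p^\gamma,q^\gamma$ match the paper's argument essentially step for step; the paper writes this out in Lemmas~3.1--3.2 and Step~2 of the proof of Theorem~\ref{pb1}.

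The genuine divergence is in how the $\gamma$-independent bound \eqref{vgamma} is obtained. You correctly observe that comparing with $v=0$ fails because $\tfrac1\gamma\|S(h;0)(0)\|^2$ blows up, and you instead derive the energy identity
\[
\frac1\mu\|q^\gamma\|_{L^2(\O_T)}^2+\|y^\gamma\|_{L^2(\O_d^T)}^2+\frac1\gamma\|S^\gamma(0,\cdot)\|_{L^2(\Omega)}^2=\int_{\omega_T}q^\gamma h+\int_{\O_d^T}z_d\,y^\gamma
\]
by testing the system against itself; this is correct and your absorption argument is clean. The paper, however, \emph{does} use a comparison argument, but with a different test function: since $\omega\varsubsetneq\O$, the choice $v=-h\chi_\omega$ yields $v\chi_\O+h\chi_\omega=0$, so $y(h;-h\chi_\omega,0)\equiv0$, $S(h;-h\chi_\omega)\equiv0$, and therefore $J^\gamma(-h\chi_\omega)=\mu\|h\|_{L^2(\omega_T)}^2$, with the $1/\gamma$ term vanishing exactly. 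From $J^\gamma(v^\gamma)\le\mu\|h\|^2$ one reads off \eqref{estug}--\eqref{estS0g} directly. The paper's trick is slicker once spotted and uses the structural hypothesis $\omega\varsubsetneq\O$ in an essential way; your energy-identity route is more systematic, does not require finding a special competitor, and yields the same constant.
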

 
 The second result, on the null controllability problem  (i.e. Problem \ref{pb2}) is stated as follows:
 
 \begin{theorem}\label{theo2} $ $
 	
 	Assume that the assumptions of Theorem \ref{theo1} hold, and $\O_d$, $\omega$ are such that $\O_d\cap \omega\neq \emptyset$. Then there exists a positive real weight function $\kappa$ (the definition of $\kappa$ will be given later) such that, for any function $z_d\in L^2(\O_d^T)$  with $\dis \frac{1}{\kappa} z_d\in L^2(\O_d^T),$  there exists a unique control $\hat{h}^\gamma\in L^2(\omega_T)$ such that if $(\hat{v}^\gamma=v^\gamma(\hat{h}^\gamma),\ \hat{y}^\gamma=y^\gamma(t,x;\hat{h}^\gamma;v^\gamma(\hat{h}^\gamma),0),\ \hat{S}^\gamma=S^\gamma(t,x;\hat{h}^\gamma;v^\gamma(\hat{h}^\gamma)),\ \hat{p}^\gamma=p^\gamma(t,x;\hat{h}^\gamma),\ \hat{q}^\gamma=q^\gamma(t,x;\hat{h}^\gamma))$ satisfies \eqref{II.27}-\eqref{II.30}, then $\dis y(T,\cdot;\hat{h}^\gamma;v^\gamma(\hat{h}^\gamma),0)=0\ \mbox{in}\  \Omega.$ Moreover,
 	
 	\begin{equation}\label{pas45}
 	\hat{h}^\gamma= \hat{\rho}^\gamma\ \mbox{in}\  \omega_T,
 	\end{equation}
 	where $\hat{\rho}^\gamma$, $\hat{\psi}^\gamma$, $\hat{\phi}^\gamma$ and $\hat{\zeta}^\gamma$ are solutions of
 	
 	\begin{equation}\label{rhogamma}
 	\left\{
 	\begin{array}{rllll}
 	\dis -\hat{\rho}_t^\gamma-\left(k(x)\hat{\rho}^\gamma_x\right)_{x}+a_0\hat{\rho}^\gamma &=&(\hat{\psi}^\gamma+\hat{\phi}^\gamma)\chi_{\O_d}& \mbox{in}& Q,\\
 	\dis \hat{\rho}^\gamma(t,0)=\hat{\rho}^\gamma(t,1)&=&0& \mbox{on}& (0,T),
 	\end{array}
 	\right.
 	\end{equation}
 	
 	\begin{equation}\label{psigamma}
 	\left\{
 	\begin{array}{rllll}
 	\dis \hat{\psi}_t^\gamma-\left(k(x)\hat{\psi}^\gamma_x\right)_{x}+a_0\hat{\psi}^\gamma &=&0& \mbox{in}& Q,\\
 	\dis \hat{\psi}^\gamma(t,0)=\hat{\psi}^\gamma(t,1)&=&0& \mbox{on}& (0,T), \\
 	\dis \hat{\psi}^\gamma(0,\cdot)&=&\dis \frac{1}{\sqrt{\gamma}}\hat{\zeta}^\gamma(0,\cdot)&\mbox{in}&\Omega,
 	\end{array}
 	\right.
 	\end{equation}
 	
 	\begin{equation} \label{phigamma}
 	\left\{
 	\begin{array}{rllll}
 	\dis \hat{\phi}_t^\gamma-\left(k(x)\hat{\phi}^\gamma_x\right)_{x}+a_0\hat{\phi}^\gamma &=&\dis -\frac{1}{\mu}\hat{\rho}^\gamma\chi_{\O}& \mbox{in}& Q,\\
 	\dis \hat{\phi}^\gamma(t,0)=\hat{\phi}^\gamma(t,1)&=&0& \mbox{on}& (0,T), \\
 	\dis \hat{\phi}^\gamma(0,\cdot)&=&0&\mbox{in}&\Omega
 	\end{array}
 	\right.
 	\end{equation}
 	and
 	\begin{equation} \label{zetagamma}
 	\left\{
 	\begin{array}{rllll}
 	\dis -\hat{\zeta}_t^\gamma-\left(k(x)\hat{\zeta}^\gamma_x\right)_{x}+a_0\hat{\zeta}^\gamma &=&\dis \frac{1}{\sqrt{\gamma}}\hat{\phi}^\gamma& \mbox{in}& Q,\\
 	\dis \hat{\zeta}^\gamma(t,0)=\hat{\zeta}^\gamma(t,1)&=&0& \mbox{on}& (0,T), \\
 	\dis \hat{\zeta}^\gamma(T,\cdot)&=&0&\mbox{in}&\Omega.
 	\end{array}
 	\right.
 	\end{equation}	
 	Moreover, there exists a constant $C=C(T,\|a_0\|_{L^\infty(Q)})>0$ independent of $\gamma$ such that
 	\begin{equation}\label{kgamma}
 	\|\hat{h}^\gamma\|_{L^2(\omega_T)}\leq C\left\|\frac{1}{\kappa} z_d\right\|_{L^2(\O_d^T)}.
 	\end{equation}
 \end{theorem}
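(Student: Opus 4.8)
The plan is to recast Problem \ref{pb2} as a null controllability problem for the coupled optimality system of Theorem \ref{theo1}, and then to solve it by duality. First I would insert the follower law \eqref{II.27} into \eqref{II.28}, so that the quadruple $(y^\gamma,S^\gamma,p^\gamma,q^\gamma)$ solves the forward--backward cascade \eqref{II.28}--\eqref{II.30} in which the leader $h$ enters only through the term $h\chi_\omega$ in the $y^\gamma$-equation and the datum $z_d$ enters only through the $q^\gamma$-equation. Since all four equations are linear, the map $h\mapsto y^\gamma(T,\cdot)$ is affine, and minimizing $J_2(h)=\|h\|_{L^2(\omega_T)}^2$ from \eqref{main} under the constraint \eqref{y(T)} is exactly the problem of steering this cascade to rest at time $T$ with a control of minimal $L^2(\omega_T)$-norm. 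By duality (the Hilbert Uniqueness Method), this is solvable, with a unique minimal-norm control, as soon as the adjoint system satisfies a suitable observability inequality, and the optimal control is then read off from the adjoint state restricted to $\omega_T$.

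Second, I would construct the adjoint system by transposition. Pairing \eqref{II.28}--\eqref{II.30} against variables $(\hat{\rho}^\gamma,\hat{\psi}^\gamma,\hat{\phi}^\gamma,\hat{\zeta}^\gamma)$, reversing time in each equation and transposing every coupling, one is led precisely to \eqref{rhogamma}--\eqref{zetagamma}: $\hat{\rho}^\gamma$ is the adjoint of $y^\gamma$ (hence it carries the free terminal datum $\hat{\rho}^\gamma(T,\cdot)=\rho_T$ dual to the constraint $y^\gamma(T,\cdot)=0$), while $\hat{\psi}^\gamma$, $\hat{\phi}^\gamma$ and $\hat{\zeta}^\gamma$ are the adjoints of $S^\gamma$, $q^\gamma$ and $p^\gamma$ respectively; the two localizations $\chi_{\O_d}$ and the two factors $1/\sqrt{\gamma}$ reappear in the transposed positions, and the coupling $-\frac1\mu q^\gamma\chi_{\O}$ of \eqref{II.28} becomes the source $-\frac1\mu\hat{\rho}^\gamma\chi_{\O}$ of \eqref{phigamma}. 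Because $h$ acts through $\chi_\omega$, the duality pairing forces the minimal-norm control to be $\hat{h}^\gamma=\hat{\rho}^\gamma$ in $\omega_T$, which is \eqref{pas45}, with the $z_d$ source producing the linear functional $\int_{\O_d^T} z_d\,\hat{\phi}^\gamma\,\dq$.

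The crux of the proof --- and the step I expect to be the main obstacle --- is the observability inequality for \eqref{rhogamma}--\eqref{zetagamma}, that is, an estimate of the form
\begin{equation*}
\int_Q \theta\big(|\hat{\rho}^\gamma|^2+|\hat{\psi}^\gamma|^2+|\hat{\phi}^\gamma|^2+|\hat{\zeta}^\gamma|^2\big)\,\dq \le C\int_{\omega_T}|\hat{\rho}^\gamma|^2\,\dq,
\end{equation*}
where $\theta$ is the positive weight produced by the new degenerate Carleman inequality announced in the abstract and $C$ is independent of $\gamma$. I would derive it by applying that Carleman estimate to each of the four equations with a common weight and a large parameter $s$, then summing: the sum yields weighted global integrals of all four unknowns together with local integrals on $\omega_T$ of each of them, and the difficulty is to retain only $\int_{\omega_T}|\hat{\rho}^\gamma|^2$ on the right-hand side. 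This is where the hypotheses $\omega\varsubsetneq\O$ and $\O_d\cap\omega\neq\emptyset$ enter: the source of $\hat{\phi}^\gamma$ is supported in $\O\supset\omega$ and the sources of $\hat{\rho}^\gamma$ are supported in $\O_d$, so, after introducing cut-off functions and exploiting the cascade structure, the unwanted local terms in $\hat{\psi}^\gamma,\hat{\phi}^\gamma,\hat{\zeta}^\gamma$ can be bounded through $\hat{\rho}^\gamma$ on $\omega\cap\O_d$ and absorbed by taking $s$ large. The delicate point is that the coupling terms carry the factor $1/\sqrt{\gamma}$; these must be handled through the energy estimates of the cascade so that the resulting constant stays uniform in $\gamma$, which is precisely what makes the bound in \eqref{kgamma} independent of $\gamma$.

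Finally, with the observability inequality in hand I would fix the weight $\kappa$ from $\theta$ and define the quadratic functional
\begin{equation*}
\mathcal{J}_\gamma(\rho_T)=\frac12\int_{\omega_T}|\hat{\rho}^\gamma|^2\,\dq+\int_{\O_d^T} z_d\,\hat{\phi}^\gamma\,\dq,
\end{equation*}
on the completion of the admissible terminal data $\rho_T$ for the seminorm $\big(\int_{\omega_T}|\hat{\rho}^\gamma|^2\,\dq\big)^{1/2}$. The assumption $\frac1\kappa z_d\in L^2(\O_d^T)$, combined with the weighted control of $\kappa\,\hat{\phi}^\gamma$ coming from the inequality above, makes the linear term continuous for this seminorm, while observability gives coercivity. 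Hence $\mathcal{J}_\gamma$ is continuous, strictly convex and coercive, so it admits a unique minimizer $\hat{\rho}_T$; writing its Euler--Lagrange equation and unravelling the duality shows that the associated primal state satisfies $y^\gamma(T,\cdot)=0$ and that the optimal leader is $\hat{h}^\gamma=\hat{\rho}^\gamma$ in $\omega_T$. Bounding the linear term by Cauchy--Schwarz against the coercivity constant then yields \eqref{kgamma}, and strict convexity gives the uniqueness of $\hat{h}^\gamma$.
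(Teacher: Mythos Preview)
Your approach is correct and reaches the same conclusion, but it follows a genuinely different route from the paper. You propose the direct HUM/variational construction: build the functional $\mathcal{J}_\gamma$ on (the completion of) terminal data $\rho_T$, use the observability inequality for coercivity and the hypothesis $\frac{1}{\kappa}z_d\in L^2(\O_d^T)$ for continuity of the linear term, minimize, and read off $\hat h^\gamma=\hat\rho^\gamma$ in $\omega_T$ from the Euler--Lagrange equation. The paper instead uses penalization: it minimizes $J_\varepsilon(h)=\frac{1}{2\varepsilon}\|y^\gamma(T,\cdot)\|_{L^2(\Omega)}^2+\frac12\|h\|_{L^2(\omega_T)}^2$, characterizes the minimizer as $h_\varepsilon^\gamma=\rho_\varepsilon^\gamma$ in $\omega_T$ through exactly the adjoint cascade \eqref{rhogamma}--\eqref{zetagamma} with the extra datum $\rho_\varepsilon^\gamma(T,\cdot)=-\frac1\varepsilon y^\gamma_\varepsilon(T,\cdot)$, then couples the duality identity $\|h_\varepsilon^\gamma\|^2+\frac1\varepsilon\|y^\gamma_\varepsilon(T)\|^2=\int_{\O_d^T}z_d\,\phi_\varepsilon^\gamma$ with the observability inequality to obtain the uniform bounds \eqref{mon10}--\eqref{mon10qt}, and finally passes to the weak limit $\varepsilon\to0$. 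Both arguments rest on the same observability estimate and deliver the same minimal-norm control; yours avoids the limit passage at the cost of working in an abstract completion, while the paper's stays in concrete spaces but needs the weak-convergence step.

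One refinement: the observability inequality the paper actually establishes (Proposition~\ref{pro}) is not the four-term estimate you sketched but the two-term bound
\[
\int_Q\kappa^2|\phi|^2\,\dq+\int_Q\frac{1}{\hat\eta^{2}}|\rho|^2\,\dq\le C\int_{\omega_T}|\rho|^2\,\dq,
\]
obtained after the key reduction $\varrho=\phi+\psi$, which collapses \eqref{rho}--\eqref{zeta} into the two-equation cascade \eqref{varrho}--\eqref{rho1} in which the factors $1/\sqrt\gamma$ no longer appear in the PDEs themselves (only in the initial datum of $\varrho$, which is irrelevant for the Carleman estimate). The degenerate Carleman is proved for this pair (Theorem~\ref{thm1} and Proposition~\ref{prop5}), and $\phi$ alone is then recovered by an energy argument on \eqref{phi}. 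This is in fact all your HUM functional needs as well, since only the weighted control of $\kappa\,\hat\phi^\gamma$ is required to dominate the linear term $\int_{\O_d^T}z_d\,\hat\phi^\gamma\,\dq$; estimating $\hat\psi^\gamma$ and $\hat\zeta^\gamma$ separately is unnecessary and is precisely where a naive four-equation Carleman would run into $\gamma$-dependent constants.
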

 
\begin{remark}$ $
	
	 Any control $v^\gamma(h)$ satisfying \eqref{lowregret} is called a low-regret control for $J_1$.
	
\end{remark}

 The rest of this paper is organized as follows. In Section \ref{well}, we state some well-posedness results for the system \eqref{eq}. In Section \ref{low}, we study Problem \ref{pb1} corresponding to the low-regret control. In fact, we prove that there exists an optimal control that can be chosen for any fixed leader (control) and we also provide the optimality system that characterizes the latter optimal control. We establish in Section \ref{Carleman} the observability inequality derived from a suitable Carleman inequality associated to the adjoint state of the optimality system obtained in Problem \ref{pb1}. In Section \ref{null}, once the follower strategy has been fixed, we study Problem \ref{pb2} corresponding to the null controllability. Finally, some concluding remarks are given in Section \ref{conclusion}.

 \section{Well-posedness result}\label{well}
 
 In the sequel, the usual norm in $L^\infty(Q)$ will be denoted by $\|\cdot\|_{\infty}$.
 We make the following assumptions on the diffusion coefficient $k$:
  \begin{equation}\label{k}
  \left\{\begin{array}{llll}
  \dis k\in \mathcal{C}([0,1])\cap\mathcal{C}^1((0,1]),\ \ k>0\ \mbox{in}\ (0,1] \ \mbox{and}\ k(0)=0,\\
  \dis \exists \tau\in [0,1)\ :\ xk^\prime(x)\leq \tau k(x),\ x\in [0,1].
  \end{array}
  \right.
  \end{equation}
  Note that the above assumptions on $k$ hold if we choose $k(x)=x^\alpha$ with $0\leq\alpha<1$. Then, in this case, the system \eqref{eq} will be called a weakly degenerate system. If $1\leq \alpha < 2$, a similar study can be done provided that the Neumann condition $\left(k(x)y_x\right)(0)=0$ is considered instead, and \eqref{eq} will be called a strongly degenerate system. We refer to \cite{alabau2006} for those different definitions.

 In order to study the well-posedness of system \eqref{eq}, we introduce as in \cite{cannarsa2005, cannarsa2008, cannarsa2016} the following weighted spaces $H^1_{k}(\Omega)$ and $H^2_{k}(\Omega)$ (in the sequel, "abs. cont." stands for "absolutely continuous"):
 \begin{equation}\label{}
 \left\{\begin{array}{llll}
 \dis H^1_{k}(\Omega)=\{y\in L^2(\Omega): y\ \mbox{is abs. cont. in}\ [0,1],~ \sqrt{k}y_x\in L^2(\Omega) \text{ and } \ y(0)=y(1)=0\}\\
 \dis H^2_{k}(\Omega)=\{y\in H^1_{k}(\Omega): k(x)y_x\in H^1(\Omega)\},
 \end{array}
 \right.
 \end{equation}
 endowed respectively with the norms:
 \begin{equation}\label{}
 \left\{\begin{array}{llll}
 \dis \|y\|^2_{H^1_{k}(\Omega)}=\|y\|^2_{L^2(\Omega)}+\|\sqrt{k}y_x\|^2_{L^2(\Omega)},\ \ \ y\in H^1_{k}(\Omega),\\
 \dis \|y\|^2_{H^2_{k}(\Omega)}=\|y\|^2_{H^1_{k}(\Omega)}+\|(k(x)y_x)_x\|^2_{L^2(\Omega)},\ \ \ y\in H^2_{k}(\Omega).
 \end{array}
 \right.
 \end{equation} 
The following assumption will help us to prove the existence result of system \eqref{eq}.
\begin{assumption}$ $\label{aspt}
	
	There exists a constant $\dis \alpha>0$ such that
	$$
	a_0(t,x)\geq \alpha\ \mbox{for all}\ (t,x)\in Q.
	$$
\end{assumption}
For readers' convenience, we set
$$
\mathcal{H}:=L^2((0,T);H^1_{k}(\Omega))\cap\mathcal{C}([0,T];L^2(\Omega)).
$$
We denote by $(H^{1}_k(\Omega))'$ the topological dual space of $H^1_k(\Omega)$.
If we set
\begin{equation}\label{defWTA}
	W_k(0,T)= \left\{\rho ~:~ \rho \in L^2((0,T);H^1_k(\Omega)) ~\text{and}~ \rho_t\in L^2\left((0,T);(H^{1}_k(\Omega))^\prime\right)\right\},
\end{equation}
then $W_k(0,T)$ endowed with the norm
\begin{equation}\label{}
	\|\rho\|^2_{	W_k(0,T)}=\|\rho\|^2_{L^2((0,T);H^1_k(\Omega))}+\|\rho_t\|^2_{L^2\left((0,T);(H^{1}_k(\Omega))^\prime\right)}
\end{equation}
is a Hilbert space. Moreover, we have the continuous embedding
\begin{equation}\label{contWTA}
	W_k(0,T)\subset C([0,T],L^2(\Omega)). 
\end{equation}

%
Now, we recall the following existence result given in \cite[Page 37]{lions2013}.
\begin{theorem}\label{Theolions61} 
	Let $\left(F, \|\cdot\|_F\right)$ be a Hilbert space. Let $\Phi$ be a subspace of $F$ endowed with a pre-Hilbert scalar product $(((\cdot,\cdot)))$ and the corresponding norm $|||\cdot|||$ .  Moreover, let $E:F\times \Phi\to \mathbb{C}$ be a sesquilinear form.  Assume that the following hypothesis hold:
	\begin{enumerate}
		\item  The embedding $\Phi \hookrightarrow F$ is continuous; that is, there is a constant $C_1>0$ such that
		\begin{equation}\label{theolions1}
			\|\varphi\|_{F}\leq C_1|||\varphi|||~~\forall\; \varphi \in \Phi.
		\end{equation}
		
		\item For all $\varphi\in \Phi$, the mapping $u\mapsto E(u,\varphi)$ is continuous on $F$.
		
		\item There is a constant $C_2>0$  such that
		\begin{equation}\label{theolions2}
			{E(\varphi,\varphi)}\geq C_2 |||\varphi|||^2~~~\text{for all}~~\varphi\in \Phi.
		\end{equation}
	\end{enumerate}
	If $\varphi\mapsto L(\varphi)$ is a semi linear continuous form  on $\Phi$, then there exists a function $u\in F$ satisfying
	$$
	E(u,\varphi)=L(\varphi)~~ \text{for all} ~~\varphi\in \Phi.
	$$
\end{theorem}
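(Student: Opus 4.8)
The plan is to reduce the problem to the Riesz representation theorem in the Hilbert space $F$, by encoding the sesquilinear form $E$ through a linear operator from $\Phi$ into $F$. First I would fix $\varphi\in\Phi$ and invoke hypothesis (2): the map $u\mapsto E(u,\varphi)$ is a continuous linear functional on $F$, so by the Riesz representation theorem there is a unique element, call it $A\varphi\in F$, such that $E(u,\varphi)=(u,A\varphi)_F$ for every $u\in F$. Since $E$ is sesquilinear, the conjugations cancel and the resulting map $A\colon\Phi\to F$, $\varphi\mapsto A\varphi$, is linear. The whole problem then reduces to finding $u\in F$ with $(u,A\varphi)_F=L(\varphi)$ for all $\varphi\in\Phi$; equivalently, to representing $L$ as a bounded linear functional through the range of $A$.

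Next I would define, on the range $M:=A(\Phi)\subset F$, the functional $\ell(A\varphi):=L(\varphi)$. The decisive quantitative ingredient is the estimate obtained by combining the coercivity (3) with the continuity of the embedding (1): for every $\varphi\in\Phi$,
\[
C_2|||\varphi|||^2\le E(\varphi,\varphi)=(\varphi,A\varphi)_F\le \|\varphi\|_F\,\|A\varphi\|_F\le C_1|||\varphi|||\,\|A\varphi\|_F,
\]
which yields $|||\varphi|||\le (C_1/C_2)\,\|A\varphi\|_F$. This single inequality does the main work. On the one hand it shows $\ell$ is well defined: if $A\varphi_1=A\varphi_2$ then $|||\varphi_1-\varphi_2|||=0$, hence $L(\varphi_1)=L(\varphi_2)$ because $L$ is continuous for $|||\cdot|||$. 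On the other hand, writing the continuity of $L$ as $|L(\varphi)|\le C_L|||\varphi|||$, we get $|\ell(A\varphi)|=|L(\varphi)|\le C_L|||\varphi|||\le (C_1C_L/C_2)\,\|A\varphi\|_F$, so $\ell$ is continuous on $M$ for the norm of $F$.

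Finally I would extend $\ell$ by continuity to the closure $\overline{M}$ of $M$ in $F$ (a closed subspace, hence itself a Hilbert space) and apply the Riesz representation theorem a second time: there exists $u\in\overline{M}\subset F$ with $\ell(w)=(u,w)_F$ for all $w\in\overline{M}$. Choosing $w=A\varphi$ gives $L(\varphi)=\ell(A\varphi)=(u,A\varphi)_F=E(u,\varphi)$ for every $\varphi\in\Phi$, which is exactly the desired conclusion. The main obstacle is precisely the mismatch between the complete space $F$ and the merely pre-Hilbert space $\Phi$: the form $E$ is coercive only with respect to the weaker norm $|||\cdot|||$, and the range $A(\Phi)$ need not be closed, so neither the well-posedness nor the $F$-boundedness of $\ell$ is automatic. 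Both are rescued by the displayed inequality, which transfers the $|||\cdot|||$-control of $\varphi$ into an $F$-control of $A\varphi$; establishing this estimate, and thereby the legitimacy of representing $L$ through $A$, is the crux of the argument, after which the two applications of Riesz are routine.
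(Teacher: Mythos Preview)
The paper does not give a proof of this theorem: it is merely recalled from \cite[Page 37]{lions2013} as a tool used later to establish well-posedness of \eqref{eq}. So there is no in-paper argument to compare against; I can only assess your proposal on its own merits.

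Your argument is correct and is in fact the standard proof of this Lions-type projection lemma. The two steps (Riesz in $F$ to produce $A\varphi$, then Riesz again on $\overline{A(\Phi)}$ to produce $u$) are exactly the right structure, and the key estimate $|||\varphi|||\le (C_1/C_2)\|A\varphi\|_F$ is where hypotheses (1) and (3) are actually consumed. Two very small points worth making precise: first, since $E$ takes values in $\mathbb{C}$ and the statement writes $E(\varphi,\varphi)\ge C_2|||\varphi|||^2$, you should read this as $\operatorname{Re}E(\varphi,\varphi)\ge C_2|||\varphi|||^2$ (or note that the hypothesis forces $E(\varphi,\varphi)$ to be real and nonnegative), and your Cauchy--Schwarz step should accordingly go through $|E(\varphi,\varphi)|$; second, ``semi-linear'' here means conjugate-linear, so $\ell$ is an antilinear functional on $A(\Phi)$ and the second Riesz step represents it as $\ell(w)=(u,w)_F$ with the inner product conjugate-linear in the second slot---your bookkeeping of conjugations is consistent with this, but it is worth stating explicitly. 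With these clarifications the proof is complete.
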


The weak solution of system \eqref{eq} is defined as follows.

\begin{dfnt}\label{weaksolution}
	We shall say that a function $y\in \mathcal{H}$ is a weak solution to \eqref{eq} if the following equality holds:
	\begin{equation}\label{Defweaksolution}
		\begin{array}{lll}
			\dis -\int_Q y\phi_t \, \dq + \int_Q k(x)y_x\phi_x\,\dq+\int_{Q}a_0 y\phi\, \dq
			=\dis \int_{Q} (h\chi_{\omega}+v\chi_{\O})\, \phi\; \dq
			+\int_{\Omega} g\,\phi(0,x)\ dx,
		\end{array}
	\end{equation}
 for every 
	\begin{equation}\label{HQ}
		\phi \in \mathbb{V}=\left\{\phi\in \mathcal{H}:\, \phi_t\in L^2(Q), \, \phi(T,\cdot)=0 \hbox{ in } \Omega \right\}.
	\end{equation}
\end{dfnt}

\begin{remark}\label{rmktrace}
	We observe the following:
	\begin{enumerate}[(a)]
		\item The space $\mathbb{V}$ endowed  with the norm
		\begin{align*}
			\|\phi\|^2_{\mathbb{V}}:=\|\phi\|^2_{L^2((0,T);H^1_k(\Omega))}+\|\phi(0,\cdot)\|^2_{L^2(\Omega)}
		\end{align*}
		is a Hilbert space.
		\item If $ \varphi \in \mathbb{V}$, then $\phi_t \in L^2(Q)\hookrightarrow L^2((0,T);(H^{1}_k(\Omega))^\prime)$; consequently, $\phi \in W_k(0,T)$. Therefore,  $\phi(0,\cdot)$ and $\phi(T,\cdot)$ exist and belong to $L^2(\Omega)$.
	\end{enumerate}
\end{remark}
Using Theorem \ref{Theolions61}, we prove the following result.	 

\begin{theorem}\label{exis}$  $
	
	Assume that the hypothesis \eqref{k} and Assumption \ref{aspt} are valid. For all $(v,h)\in L^2(\O_T)\times L^2(\omega_T) $ and $ g\in L^2(\Omega)$, the system \eqref{eq} admits a unique weak solution $y=y(h;v,g)=y(t,x;h;v,g)\in \mathcal{H}$ in the sense of Definition \ref{weaksolution}.
	Moreover, there exists a constant $C=C(T,\|a_0\|_{L^\infty(Q)})>0$ such that the following estimate holds:
	\begin{equation}\label{esty1y2}
	\begin{array}{llllll}
	\dis \|y(T,\cdot)\|^2_{L^2(\Omega)}+\|y\|^2_{L^2((0,T); H^1_k(\Omega))}\leq 
	C\left(\|v\|^2_{L^2(\O_T)}+\|h\|^2_{L^2(\omega_T)}+\|g\|^2_{L^2(\Omega) }\right).
	\end{array}
	\end{equation} 	
\end{theorem}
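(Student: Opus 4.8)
The plan is to prove existence and uniqueness via the Lions-type duality argument encapsulated in Theorem~\ref{Theolions61}, and then derive the energy estimate \eqref{esty1y2} by the standard multiplier method. For the existence part, I would take $F=L^2((0,T);H^1_k(\Omega))$ and $\Phi=\mathbb{V}$ equipped with the pre-Hilbert norm $|||\phi|||^2=\|\phi\|^2_{L^2((0,T);H^1_k(\Omega))}+\|\phi(0,\cdot)\|^2_{L^2(\Omega)}$. The sesquilinear form is the left-hand side of \eqref{Defweaksolution}, namely
\begin{equation*}
E(y,\phi)=-\int_Q y\phi_t\,\dq+\int_Q k(x)y_x\phi_x\,\dq+\int_Q a_0\, y\phi\,\dq,
\end{equation*}
and the linear form is $L(\phi)=\int_Q(h\chi_\omega+v\chi_\O)\phi\,\dq+\int_\Omega g\,\phi(0,x)\,dx$. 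The three hypotheses of Theorem~\ref{Theolions61} must then be checked: the continuous embedding $\Phi\hookrightarrow F$ is immediate from the definition of $|||\cdot|||$; continuity of $y\mapsto E(y,\phi)$ on $F$ follows from Cauchy--Schwarz together with $a_0\in L^\infty(Q)$ and the fact that for fixed $\phi\in\mathbb{V}$ one has $\phi_t\in L^2(Q)$; and continuity of $L$ on $\Phi$ uses that $\phi\mapsto\phi(0,\cdot)$ is controlled by $|||\phi|||$. The theorem then yields a $y\in F$ satisfying $E(y,\phi)=L(\phi)$ for all $\phi\in\mathbb{V}$, which is precisely the weak formulation.

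The crux is the coercivity estimate \eqref{theolions2}. Testing with $\phi$ itself, the troublesome term is $-\int_Q\phi\,\phi_t\,\dq$, which by integration in time equals $\frac{1}{2}\|\phi(0,\cdot)\|^2_{L^2(\Omega)}-\frac{1}{2}\|\phi(T,\cdot)\|^2_{L^2(\Omega)}$; since $\phi(T,\cdot)=0$ for $\phi\in\mathbb{V}$ this contributes exactly $\frac{1}{2}\|\phi(0,\cdot)\|^2_{L^2(\Omega)}$, with the favorable sign. The diffusion term gives $\int_Q k|\phi_x|^2\,\dq=\|\sqrt{k}\phi_x\|^2_{L^2(Q)}$, and the potential term is bounded below by $\alpha\|\phi\|^2_{L^2(Q)}$ thanks to Assumption~\ref{aspt}. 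Combining, $E(\phi,\phi)\geq \min\{1,\alpha,\tfrac12\}|||\phi|||^2$, giving \eqref{theolions2}. I expect this coercivity step to be the main obstacle, principally because the sign condition on $a_0$ (Assumption~\ref{aspt}) is what makes the lower bound on $\int_Q a_0|\phi|^2$ clean; without it one would need a Gronwall/exponential-weight change of variable $y\mapsto e^{-\lambda t}y$ to absorb a possibly negative potential, and care is needed that the weighted space $H^1_k$ behaves correctly at the degeneracy point $x=0$ (here the Hardy-type control implicit in assumption \eqref{k} and the boundary condition $\phi(0)=0$ guarantee the boundary terms from integration by parts in $x$ vanish).

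For \emph{uniqueness}, I would argue that if $y_1,y_2$ are two weak solutions then their difference $\bar y=y_1-y_2$ satisfies $E(\bar y,\phi)=0$ for all $\phi\in\mathbb{V}$; one then shows this forces $\bar y=0$. The standard route is to note that $\bar y$ solves the homogeneous equation with zero data and, using $\bar y$ (suitably regularized, or via a density/duality argument in $W_k(0,T)$) as test function, conclude $\|\bar y\|=0$; alternatively one invokes the injectivity already built into the variational framework. Finally, the energy estimate \eqref{esty1y2} follows by multiplying the equation by $y$ and integrating over $(0,t)\times\Omega$: the time term yields $\frac12\|y(t,\cdot)\|^2_{L^2(\Omega)}-\frac12\|g\|^2_{L^2(\Omega)}$, the diffusion term yields $\int_0^t\|\sqrt{k}y_x\|^2_{L^2(\Omega)}$, the potential is handled by $\|a_0\|_\infty$, and the right-hand side is bounded by Cauchy--Schwarz and Young's inequality as $\frac12\int_0^t(\|y\|^2+\|v\chi_\O+h\chi_\omega\|^2)$. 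Applying Gronwall's inequality in $t$, then taking $t=T$ for the first term and integrating in $t$ for the $H^1_k$ term, produces the constant $C=C(T,\|a_0\|_\infty)$ and the stated bound. The membership $y\in\mathcal{C}([0,T];L^2(\Omega))$ then follows from the embedding \eqref{contWTA} once one checks $y_t\in L^2((0,T);(H^1_k(\Omega))')$ from the equation.
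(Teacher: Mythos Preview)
Your proposal is correct and follows essentially the same route as the paper: existence via Theorem~\ref{Theolions61} with exactly the spaces $F=L^2((0,T);H^1_k(\Omega))$, $\Phi=\mathbb{V}$ and coercivity from Assumption~\ref{aspt}, then the energy estimate and uniqueness by testing with the solution. The only cosmetic difference is that the paper handles the potential term in the estimate and in the uniqueness step via the exponential substitution $z=e^{-rt}y$ (choosing $r>\|a_0\|_\infty$) rather than Gronwall, which is the equivalent device you yourself flag as the alternative.
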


The proof of Theorem \ref{exis} can be found in the Appendix.\\

For the rest of this paper, we state the following Hardy-Poincar\'e inequality.

\begin{prop}(Hardy-Poincar\'e inequality)\cite[Proposition 2.1]{alabau2006} $ $
	
	Assume that $k:[0,1]\longrightarrow\R_+$ belong to $\mathcal{C}([0;1])$, $k(0)=0$ and $k>0$ on $(0,1]$. Furthermore, assume that here exists $\theta\in (0,1)$ such that the function $\dis x\longmapsto\frac{k(x)}{x^\theta}$ is non-increasing in a neighbourhood of $x=0$. Then, there is a constant $\overline{C}>0$ such that for any $z$, locally absolutely continuous on $(0,1]$, continuous at $0$, satisfying $z(0)=0$ and $\dis \int_0^1 k(x)|z^\prime(x)|^2\ dx<+\infty$, the following inequality holds
	
	\begin{equation}\label{hardy}
		\int_0^1 \frac{k(x)}{x^2}|z(x)|^2\ dx<\overline{C}\int_0^1 k(x)|z^\prime(x)|^2\ dx.	
	\end{equation}
	Moreover, under the same hypothesis on $z$ and the fact that the function $\dis x\longmapsto\frac{k(x)}{x^\theta}$ is non-increasing on $(0,1]$, the inequality \eqref{hardy} holds with $\dis \overline{C}=\frac{4}{(1-\theta)^2}.$	
\end{prop}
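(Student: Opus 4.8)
The plan is to reduce the stated estimate to a weighted one‑dimensional Hardy inequality and to exploit the monotonicity hypothesis through a single integration by parts. Writing $w(x)=k(x)/x^{\theta}$, so that $k(x)=x^{\theta}w(x)$ with $w$ non‑increasing (near $0$ in general, or on all of $(0,1]$ in the sharp case), the inequality to be proved becomes
\begin{equation*}
I:=\int_0^1 x^{\theta-2}w(x)|z(x)|^2\,dx \;\le\; \overline{C}\int_0^1 x^{\theta}w(x)|z'(x)|^2\,dx .
\end{equation*}
The key observation is that $x^{\theta-2}$ is an exact derivative, $x^{\theta-2}=\frac{1}{\theta-1}\frac{d}{dx}x^{\theta-1}$, with $\theta-1<0$, which is what allows the monotonicity of $w$ to enter with a favourable sign.

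I would first treat the sharp case, where $w$ is non‑increasing on all of $(0,1]$. Integrating by parts gives
\begin{equation*}
I=\frac{1}{\theta-1}\Big[x^{\theta-1}w\,z^2\Big]_0^1-\frac{1}{\theta-1}\int_0^1 x^{\theta-1}\big(z^2\,dw+2w\,z\,z'\,dx\big),
\end{equation*}
where $dw\le 0$ is the non‑positive Stieltjes measure of the monotone function $w$. The boundary term at $x=1$ equals $-\frac{1}{1-\theta}w(1)z(1)^2\le 0$, and the term carrying $dw$ equals $\frac{1}{1-\theta}\int_0^1 x^{\theta-1}z^2\,dw\le 0$; both have the favourable sign and are discarded. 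After checking that the boundary term at $0$ vanishes, this leaves, via Cauchy–Schwarz with the splitting $x^{\theta-1}w=\big(x^{(\theta-2)/2}\sqrt w\big)\big(x^{\theta/2}\sqrt w\big)$,
\begin{equation*}
I\le \frac{2}{1-\theta}\int_0^1 x^{\theta-1}w\,z\,z'\,dx \le \frac{2}{1-\theta}\,I^{1/2}\Big(\int_0^1 k\,(z')^2\,dx\Big)^{1/2}.
\end{equation*}
Dividing by $I^{1/2}$ and squaring yields exactly $I\le \frac{4}{(1-\theta)^2}\int_0^1 k\,(z')^2\,dx$, the announced optimal constant.

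For the general case, where $w$ is only non‑increasing on some neighbourhood $(0,\delta)$ of the origin, I would split $\int_0^1=\int_0^\delta+\int_\delta^1$. On $(0,\delta)$ the same integration by parts applies verbatim, the new boundary term at $\delta$ being $-\frac{1}{1-\theta}\delta^{\theta-1}w(\delta)z(\delta)^2\le 0$, so that $\int_0^\delta \frac{k}{x^2}z^2\le \frac{4}{(1-\theta)^2}\int_0^1 k(z')^2$. On $(\delta,1)$ the coefficient $k/x^2$ is bounded by a constant $M$ because $k$ is continuous and positive there, and it then suffices to bound $\int_0^1 z^2$ by $\int_0^1 k(z')^2$. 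This Poincaré‑type estimate follows from $z(0)=0$ and Cauchy–Schwarz, $|z(x)|^2\le\big(\int_0^x ds/k\big)\int_0^1 k(z')^2$, together with the integrability of $1/k$ near $0$ (a consequence of $k(s)\ge s^{\theta}w(\delta)$ and $\theta<1$). Adding the two pieces produces a finite, though non‑optimal, constant $\overline{C}$.

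The main obstacle is the rigorous justification of the two boundary features of the integration by parts. First, one must show $\lim_{x\to0^+}x^{\theta-1}w(x)z(x)^2=0$; rewriting this quantity as $\frac{k(x)}{x}z(x)^2$ and inserting the bound $z(x)^2\le \frac{x^{1-\theta}}{(1-\theta)w(x)}\int_0^x k(z')^2$ (again from Cauchy–Schwarz and monotonicity of $w$) shows it is dominated by $\frac{1}{1-\theta}\int_0^x k(z')^2\to 0$, using only the finite‑energy hypothesis. Second, since $w$ is merely monotone, its derivative is a measure, so the integration by parts must be read in the Riemann–Stieltjes sense, or obtained by approximating $w$ by smooth non‑increasing functions and passing to the limit; the sign $dw\le 0$ is precisely what renders the corresponding term harmless. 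The strict inequality is then automatic, since equality in Cauchy–Schwarz would force $z\propto x^{c}$, incompatible with the simultaneous vanishing of the discarded boundary and monotonicity terms under the stated constraints.
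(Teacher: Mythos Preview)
The paper does not prove this proposition at all: it is quoted verbatim from \cite[Proposition 2.1]{alabau2006} and used as a black box, so there is no ``paper's own proof'' to compare against. Your argument is therefore not a variant of something in the paper but a self-contained proof of the cited result.

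That said, your proof is correct and is in fact the standard route to weighted one-dimensional Hardy inequalities: write $k(x)=x^{\theta}w(x)$ with $w$ non-increasing, integrate $x^{\theta-2}=\frac{1}{\theta-1}(x^{\theta-1})'$ by parts, discard the sign-favourable boundary and $dw$ contributions, and close with Cauchy--Schwarz using the splitting $x^{\theta-1}w=(x^{(\theta-2)/2}\sqrt{w})(x^{\theta/2}\sqrt{w})$. The verification that $x^{\theta-1}w(x)z(x)^2\to 0$ at $0$ via $|z(x)|^2\le \frac{x^{1-\theta}}{(1-\theta)w(x)}\int_0^x k(z')^2$ is exactly the right step, and the localisation on $(0,\delta)$ plus the crude Poincar\'e bound on $(\delta,1)$ handles the general case cleanly.

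Two small points worth tightening. First, the self-referential inequality $I\le \frac{2}{1-\theta}I^{1/2}(\cdots)^{1/2}$ is only informative once you know $I<\infty$; the clean way is to run the integration by parts on $(\varepsilon,1)$, obtain a bound on $\int_\varepsilon^1$ that is uniform in $\varepsilon$ (your boundary estimate at $\varepsilon$ already gives the needed control), and let $\varepsilon\to 0$. Second, the \emph{strict} inequality as stated fails for $z\equiv 0$; your equality analysis correctly shows that for $z\not\equiv 0$ equality is impossible (it would force $z=Ax^c$ together with $w(1)z(1)^2=0$, hence $A=0$), so the statement should really be read with ``$\le$'', or with the tacit assumption $z\not\equiv 0$.
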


\section{Study of Problem \ref{pb1}: low-regret problem}\label{low}
In this section, we aim to prove Theorem \ref{theo1}. Before going further, we present in the following subsection some results needed to prove the existence and uniqueness of the control $v^{\gamma}$ (follower).

\subsection{Reformulation of the optimization problem \eqref{lowregret}}

Here, we firstly  show that the optimization problem \eqref{lowregret} is equivalent to a classical optimal control problem. We state and prove a result allowing us to obtain a decomposition of the functional $J$ given by \eqref{secon2}.
\begin{leme}$ $
	
	Let $(v,h)\in L^2(\O_T)\times L^2(\omega_T)$ and $g\in L^2(\Omega)$. Then, we have:
	\begin{equation}\label{decom2}
	\begin{array}{rll}
	\dis J(h;v,g)=J(0;0,g)+ J(h;v,0)-\|z_d\|^2_{L^2(\O_d^T)}+\dis 2\int_{\Omega}g\ S(0,x;h;v)\ dx,
	\end{array}
	\end{equation}
	where $S(h;v)=S(t,x;h;v)\in L^2((0,T);H^1_{k}(\Omega))$ is solution of
	\begin{equation}\label{S(v)}
	\left\{
	\begin{array}{rllll}
	\dis -S_t-\left(k(x)S_x\right)_{x}+a_0S  &=&y(h;v,0)\chi_{\O_d}& \mbox{in}& Q,\\
	\dis S(h;v)(t,0)=S(h;v)(t,1)&=&0& \mbox{on}& (0,T), \\
	\dis S(h;v)(T,\cdot)&=&0&\mbox{in}&\Omega.
	\end{array}
	\right.
	\end{equation}	
\end{leme}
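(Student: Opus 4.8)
The plan is to establish the decomposition \eqref{decom2} by expanding the quadratic functional $J$ and then identifying the cross term using the auxiliary adjoint state $S$. The key observation is that $J(h;v,g)$ is quadratic in its arguments and that the state $y(h;v,g)$ depends affinely on $(h,v,g)$. By linearity of the system \eqref{eq}, I would first write
$$
y(h;v,g)=y(h;v,0)+y(0;0,g),
$$
where $y(h;v,0)$ solves \eqref{eq} with initial datum $0$ and $y(0;0,g)$ solves the free evolution with initial datum $g$ and no controls. Substituting this splitting into \eqref{secon2} and expanding the square $|y(h;v,g)-z_d|^2$, the only term involving the control quadratically is $\mu\int_{\O_T}|v|^2$, which belongs to both $J(h;v,0)$ and, when compared against $J(0;0,g)$ (whose control part vanishes), must be bookkept carefully.

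Concretely, I would expand
$$
|y(h;v,g)-z_d|^2=|y(h;v,0)-z_d|^2+|y(0;0,g)|^2+2\,y(0;0,g)\bigl(y(h;v,0)-z_d\bigr),
$$
integrate over $\O_d^T$, and add the control penalty. The first group reconstitutes $J(h;v,0)$, the second group reconstitutes $J(0;0,g)$ after noting that $J(0;0,g)=\int_{\O_d^T}|y(0;0,g)-z_d|^2+0=\int_{\O_d^T}|y(0;0,g)|^2-2\int_{\O_d^T}y(0;0,g)z_d+\|z_d\|^2_{L^2(\O_d^T)}$. Collecting the leftover terms produces the correction $-\|z_d\|^2_{L^2(\O_d^T)}$ together with the cross term $2\int_{\O_d^T}y(0;0,g)\,y(h;v,0)$. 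The remaining task is to rewrite this last cross term as $2\int_\Omega g\,S(0,x;h;v)\,dx$.

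The main step, and the place requiring care, is the identification of the cross term via the adjoint equation \eqref{S(v)}. I would multiply the equation for $S(h;v)$ by $y(0;0,g)$ and integrate over $Q$, then integrate by parts in both $t$ and $x$, using the boundary conditions $S(t,0)=S(t,1)=0$ and $y(0;0,g)(t,0)=y(0;0,g)(t,1)=0$, together with the terminal condition $S(T,\cdot)=0$ and the initial condition $y(0;0,g)(0,\cdot)=g$. Because $y(0;0,g)$ solves the homogeneous equation $y_t-(k y_x)_x+a_0 y=0$, the bulk terms from the self-adjoint spatial operator and the zero-order potential cancel against the corresponding terms obtained by transposing the operator onto $S$; the integration by parts in time transfers the derivative from $S$ to $y(0;0,g)$ and generates precisely the boundary-in-time contribution
$$
\int_\Omega S(0,x;h;v)\,y(0;0,g)(0,x)\,dx=\int_\Omega g\,S(0,x;h;v)\,dx,
$$
while the source term $y(h;v,0)\chi_{\O_d}$ on the right-hand side of \eqref{S(v)} pairs with $y(0;0,g)$ to yield $\int_{\O_d^T}y(h;v,0)\,y(0;0,g)\,dt\,dx$, which is exactly the cross term we needed to rewrite.

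The delicate point in the integration by parts is justifying the boundary terms at the degenerate endpoint $x=0$, where $k(0)=0$; here I would invoke that $S,y(0;0,g)\in L^2((0,T);H^1_k(\Omega))$ so that $\sqrt{k}\,S_x$ and $\sqrt{k}\,y_x$ are in $L^2$, guaranteeing that the flux terms $k(x)S_x\,y$ and $k(x)y_x\,S$ vanish at $x=0$ in the trace sense afforded by the weighted space $H^1_k(\Omega)$ (and vanish at $x=1$ by the homogeneous Dirichlet condition). Assembling the cross-term identity with the earlier algebraic expansion then yields \eqref{decom2} directly. I expect the purely algebraic bookkeeping to be routine; the only genuine obstacle is the careful duality/integration-by-parts argument at the degenerate boundary, which is where the weighted-space regularity of the weak solutions must be used.
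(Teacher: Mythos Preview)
Your proposal is correct and follows essentially the same approach as the paper: split $y(h;v,g)=y(h;v,0)+y(0;0,g)$ by linearity, expand the quadratic to obtain $J(h;v,g)=J(0;0,g)+J(h;v,0)-\|z_d\|^2_{L^2(\O_d^T)}+2\int_{\O_d^T}y(h;v,0)\,y(0;0,g)$, and then identify the cross term with $\int_\Omega g\,S(0,x;h;v)\,dx$ by testing the adjoint equation \eqref{S(v)} against $y(0;0,g)$ and integrating by parts. Your treatment is in fact slightly more detailed than the paper's, which simply asserts the algebraic expansion and the integration-by-parts identity without discussing the degenerate boundary; your remarks on the weighted-space regularity at $x=0$ are a welcome addition.
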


\begin{proof}
Let $y=y(h;v,g)=y(t,x;h;v,g)$ be the solution of \eqref{eq}. Then we write
\begin{equation}\label{decom}
y(h;v,g)=y(h;v,0)+y(0;0,g),
\end{equation}
where $y(h;v,0)$ and $y(0;0,g)$ are respectively solutions of
\begin{equation}\label{II.2}
\left\{
\begin{array}{rllll}
\dis y_t(h;v,0)-\left(k(x)y_x(h;v,0)\right)_{x}+a_0y(h;v,0)  &=&v\chi_{\mathcal{O}}+h\chi_{\omega}& \mbox{in}& Q,\\
\dis y(t,1;h;v,0)=y(t,0;h;v,0)&=&0& \mbox{on}& (0,T), \\
\dis y(0,x;h;v,0)&=&0&\mbox{in}&\Omega
\end{array}
\right.
\end{equation}
and
\begin{equation}\label{II.3}
\left\{
\begin{array}{rllll}
\dis y_t(0;0,g)-\left(k(x)y_x(0;0,g)\right)_{x}+a_0y(0;0,g)  &=&0& \mbox{in}& Q,\\
\dis y(t,1;0;0,g)=y(t,0;0;0,g)&=&0& \mbox{on}& (0,T), \\
\dis y(0,\cdot;0;0,g)&=&g&\mbox{in}&\Omega.
\end{array}
\right.
\end{equation}
Since  $g\in L^2(\Omega)$ and $(v,h)\in L^2(\O_T)\times L^2(\omega_T)$, we know that $y(h;v,0)$ and $y(0;0,g)$  belong to $L^2((0,T);H^1_k(\Omega)).$   Using the decomposition of the state equation \eqref{decom}, we obtain

\begin{eqnarray}\label{II.5}
	J(h;v,g)=J(0;0,g)+J(h;v,0) -\|z_d\|^2_{L^2(\O_d^T)}+2\int_{\O_d^T} y(h;v,0)y(0;0,g)\ dxdt,
\end{eqnarray}
where
\begin{subequations}\label{Jk}
	\begin{alignat}{11}
	J(h;v,0)=\int_{\O_d^T}|y(h; v,0)-z_d|^2\ dxdt+\mu\int_{\O_T}|v|^2\ dxdt,\\
	J(0;0,g)=\int_{\O_d^T}|y(0; 0,g)-z_d|^2\ dxdt.
	\end{alignat}
\end{subequations}
Now, if we multiply the first equation in \eqref{S(v)} by $y(0;0,g)$ and integrate by parts over $Q$, we obtain
\begin{equation*}
\int_{\O_d^T}y(h; v,0) y(0; 0,g)\ dxdt =\dis \int_{\Omega}S(0,x;h;v) \,g\, dt.
\end{equation*}
Combining this latter equality with \eqref{II.5}, we deduce \eqref{decom2}.		
\end{proof}

Using the previous lemma, we have the following result:

\begin{leme}\label{leme2}$ $
	
	Let $h\in L^2(\omega_T)$ and $\gamma>0$. Then, the optimization problem \eqref{lowregret} is equivalent to the following optimal control problem: find $v^\gamma=v^\gamma(h)\in L^2(\O_T)$ such that
	\begin{equation}\label{II.14}
	J^{\gamma}(v^{\gamma}) =\inf_{v\in L^2(\O_T)}J^{\gamma }(v),
	\end{equation}
	 where
	 \begin{equation} \label{II.12a}
	 J^{\gamma}(v) =\dis J(h;v,0)-\|z_d\|^2_{L^2(\O_d^T)}+\frac{1}{\gamma}\left\|S\left(0,\cdot;h,v\right)\right\| _{L^{2}(\Omega)}^{2}.
	 \end{equation}
\end{leme}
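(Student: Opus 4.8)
The plan is to insert the decomposition \eqref{decom2} of $J$ into the definition \eqref{all16} of $J_1$, and then perform the maximization over the missing datum $g$ explicitly, since the dependence on $g$ turns out to be quadratic. First I would write, using \eqref{decom2} and noting that the two copies of $J(0;0,g)$ cancel,
\[
J_1(h;v,g)=J(h;v,0)-\|z_d\|^2_{L^2(\O_d^T)}+2\int_\Omega g\,S(0,x;h;v)\,dx-\gamma\int_\Omega|g|^2\,dx .
\]
The first two terms on the right-hand side do not depend on $g$, so computing $\sup_{g\in L^2(\Omega)}J_1(h;v,g)$ reduces to maximizing the last two terms.

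Next I would set $S_0:=S(0,\cdot;h;v)$, which is a well-defined element of $L^2(\Omega)$: the solution of \eqref{S(v)} belongs to $\mathcal{H}$, so its trace at $t=0$ makes sense in $L^2(\Omega)$ by the embedding \eqref{contWTA}. The map $g\mapsto 2\int_\Omega g\,S_0\,dx-\gamma\|g\|^2_{L^2(\Omega)}$ is a strictly concave quadratic functional on the Hilbert space $L^2(\Omega)$; since $\gamma>0$ it is bounded above and tends to $-\infty$ as $\|g\|_{L^2(\Omega)}\to\infty$, hence it attains its maximum at the unique critical point. Completing the square gives
\[
2\int_\Omega g\,S_0\,dx-\gamma\|g\|^2_{L^2(\Omega)}=-\gamma\left\|g-\tfrac{1}{\gamma}S_0\right\|^2_{L^2(\Omega)}+\tfrac{1}{\gamma}\|S_0\|^2_{L^2(\Omega)},
\]
so that the maximizer is $g^\ast=\tfrac{1}{\gamma}S_0$ and the maximal value is $\tfrac{1}{\gamma}\|S_0\|^2_{L^2(\Omega)}$.

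Substituting this value back then yields
\[
\sup_{g\in L^2(\Omega)}J_1(h;v,g)=J(h;v,0)-\|z_d\|^2_{L^2(\O_d^T)}+\frac{1}{\gamma}\left\|S(0,\cdot;h;v)\right\|^2_{L^2(\Omega)}=J^\gamma(v),
\]
which is exactly \eqref{II.12a}. Consequently $\inf_{v}\sup_{g}J_1(h;v,g)=\inf_{v}J^\gamma(v)$, and a control $v^\gamma$ solves \eqref{lowregret} if and only if it minimizes $J^\gamma$, which is the asserted equivalence. The computation is otherwise routine; the only point that genuinely requires care is ensuring that the inner supremum is finite and attained, which is precisely what the positivity of $\gamma$ guarantees (for $\gamma=0$ the supremum would be $+\infty$ unless $S_0=0$, which is the very reason the low-regret relaxation is used), together with the fact that $S(0,\cdot;h;v)$ is a legitimate element of $L^2(\Omega)$ via the trace/embedding results of Section \ref{well}.
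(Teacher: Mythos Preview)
Your proof is correct and follows essentially the same route as the paper: insert the decomposition \eqref{decom2} into $J_1$, then compute the inner supremum over $g$ explicitly (the paper phrases this step as a Fenchel--Legendre transform, while you complete the square directly, which is the same computation). Your added remarks on why $S(0,\cdot;h;v)\in L^2(\Omega)$ and on the role of $\gamma>0$ are welcome clarifications that the paper leaves implicit.
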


\begin{proof}
Using the decomposition \eqref{decom2}, we have 	
$$
\begin{array}{lll}
\dis \sup_{g\in L^2(\Omega)} J_1(h;v,g)
&=&\dis \sup_{g\in L^2(\Omega)}\left\{ J(h;v,g)-J(0;0,g)-\gamma \left\|g\right\|_{L^{2}(\Omega)}^{2}\right\}\\
&=&\dis J(h;v,0)-\|z_d\|^2_{L^2(\O_d^T)}
\dis +2\sup_{g\in L^2(\Omega)}\left\{\int_{\Omega}S(0,x;h;v) \,g \ dx-\frac{\gamma }{2}\left\|g\right\| _{L^{2}(\Omega)}^{2}\right\}.
\end{array}
$$
By means of the Fenchel-Legendre transform, we obtain
$$
\begin{array}{lll}
\dis 2\sup_{g\in L^2(\Omega)}\left\{\int_{\Omega}S(0,x;h;v) \,g\ dx-\frac{\gamma }{2}\left\|g\right\| _{L^{2}(\Omega)}^{2}\right\}=\frac{1}{\gamma}\left\|S\left(0,\cdot;k,v\right)\right\| _{L^{2}(\Omega)}^{2}.
\end{array}
$$
Therefore,

\begin{equation} \label{II.12}
\begin{array}{lll}
\dis \sup_{g\in L^2(\Omega)}J_1(h;v,g)
\dis &=&\dis J(h;v,0)-\|z_d\|^2_{L^2(\O_d^T)}+\frac{1}{\gamma}\left\|S\left(0,\cdot;k,v\right)\right\| _{L^{2}(\Omega)}^{2}\\
&=&J^{\gamma }(v).
\end{array}
\end{equation}	
Consequently, the optimization problem \eqref{lowregret} is equivalent to the standard optimal control problem \eqref{II.14}.
\end{proof}	

\begin{remark}\label{remNoregret} If we consider the functional \eqref{all16} with $\gamma=0$, then optimization problem \eqref{lowregret} becomes
	\begin{equation}\label{noregret}
	\inf_{\atop v\in L^2(\O_T)}\sup_{g\in L^2(\Omega)}\left[J(h;v,g)-J(0;0,g)\right].
	\end{equation}
	Then we deal with the no-regret control problem. Therefore in view of \eqref{decom2}, the no-regret control denoted $\hat{v}$ belongs to the set
	\begin{equation}
	\mathcal{U}=\left\{v\in  L^2(\mathcal{O}_T) \hbox{ such that }\dis \int_{\Omega}S(0,x;h;v) \,g \,dx =0, \quad \forall g\in L^2(\Omega)\right\}.
	\label{defU}
	\end{equation}
\end{remark}

\subsection{Proof of Theorem \ref{pb1}.}	
 To prove Theorem \ref{pb1}, we proceed in three steps.\\
\textbf{Step 1.} We prove that for any $h\in L^2(\omega_T)$ and $\gamma>0$, the optimization problem \eqref{lowregret} has a unique solution $v^\gamma=v^\gamma(h)\in L^2(\O_T)$.\par 

Solving the optimization problem \eqref{lowregret} is equivalent to solve the minimization problem \eqref{II.14} (see Lemma \ref{leme2}).\\
For any $v\in L^2(\O_T)$, we have $J^{\gamma }(v)\geq -\|z_d\|_{L^2(\O_d^T)}^2$. Indeed by taking $v=-h\chi_{\omega}$ (knowing that $\omega\varsubsetneq \O$), we obtain
$$
J^\gamma(v)=\mu\|h\|^2_{L^2(\omega_T)}\geq-\|z_d\|_{L^2(\O_d^T)}^2.
$$ 
Consequently, the set $\left\{J^{\gamma }(v):\ J^{\gamma }(v)\geq -\|z_d\|_{L^2(\O_d^T)}^2,\ v\in L^2(\O_T)\right\}$ is a nonempty and lower bounded set of $\R$. Then, the minimum of $J^\gamma$, $j=\inf_{\atop v\in L^2(\omega_T)}J^\gamma(v)$ exists
and there is a minimizing sequence $v_n \in L^2(\O_d^T)$ such that 
$$
\lim\limits_{n\rightarrow\infty}J^\gamma(v_n)=j.
$$
Using classical arguments (see e.g. \cite{romario2018, djomegne2018,kenne2020}), we prove that the minimization problem \eqref{II.14} admits a unique solution. Therefore the optimization problem \eqref{lowregret} has a unique solution.\\
\textbf{Step 2.} Now, we prove that the solution $v^\gamma$ of the optimization problem \eqref{lowregret} (or equivalently \eqref{II.14}) is characterized by \eqref{II.27}-\eqref{II.30}.\par 

To characterize the optimal control $v^{\gamma }$, we write the Euler-Lagrange optimality conditions:%
\begin{equation}\label{euler}
\underset{\lambda \rightarrow 0}{\lim }\frac{J^{\gamma }( v^{\gamma}+\lambda v) -J^{\gamma }(v^{\gamma }) }{\lambda }=0,\quad \forall v\in L^2(\O_T).
\end{equation}
After some calculations,  \eqref{euler} gives,
\begin{equation}\label{II.21}
\begin{array}{lll}
0&=&\dis \int_{\O_d^T}\bar{y}\left( y\left(h; v^{\gamma },0\right) -z_{d}\right)
\dq+\mu\int_{\O_T}v^\gamma v\ \dq\\
\\
&+&\dis\frac{1}{\gamma }\int_{\Omega}\bar{S}\left(0,x;h;v\right) S\left(0,x;h;v^{\gamma}\right)\ dx, \quad \forall v\in L^{2}\left( \O_T\right),
\end{array}
\end{equation}
where $\bar{y}=\bar{y}(t,x;0;v,0)$ and $\bar{S}(0;v)=\bar{S}(t,x;0;v)$ are respectively  solutions of
\begin{equation}\label{II.19}
\left\{
\begin{array}{rllll}
\dis \bar{y}_t-\left(k(x)\bar{y}_x\right)_{x}+a_0\bar{y} &=&\dis v\chi_{\mathcal{O}}& \mbox{in}& Q,\\
\dis \bar{y}(t,0)=\bar{y}(t,1)&=&0& \mbox{on}& (0,T), \\
\dis \bar{y}(0,\cdot)&=&0&\mbox{in}&\Omega
\end{array}
\right.
\end{equation}
and
\begin{equation}  \label{II.20}
\left\{
\begin{array}{rllll}
\dis -\bar{S}_t-\left(k(x)\bar{S}_x\right)_{x}+a_0\bar{S} &=&\bar{y}\chi_{\O_d}& \mbox{in}& Q,\\
\dis \bar{S}(t,0)=\bar{S}(t,1)&=&0& \mbox{on}& (0,T), \\
\dis \bar{S}(T,\cdot)&=&0&\mbox{in}&\Omega.
\end{array}
\right.
\end{equation}
To interpret \eqref{II.21}, we use $p^\gamma$ and $q^\gamma$ respectively solutions of \eqref{II.31} and \eqref{II.30}. So if we multiply the first equation of \eqref{II.19} and \eqref{II.20} respectively by $ q^{\gamma }$ and $\dis \frac{1}{\sqrt{\gamma}}p^{\gamma }$ and integrate by parts over $Q$, we respectively obtain:
\begin{equation}
\dis \int_{\O_d^T} \bar{y}\,\left(y(h; v^{\gamma },0) -z_{d}+\frac{1}{\sqrt{\gamma }}p^{\gamma
}\right) \dq=\int_{\O_T} v\,q^{\gamma }\dq
\label{II.24}
\end{equation}
and
\begin{equation}
\frac{1}{{\gamma }}\int_{\Omega} \bar{S}(0,x;h;v) \,S(
0,x;h;v^{\gamma })\ dx =\dis \frac{1}{\sqrt{\gamma}}\int_{\O_d^T} \bar{y}\,p^{\gamma }\dq.  \label{II.25}
\end{equation}

Combining \eqref{II.21}, \eqref{II.24} and (\ref{II.25}) we obtain:

$$
\int_{\O_T}\left( \mu v^{\gamma }+q^{\gamma }\right) v\ \dq=0,\;\;\forall v\in L^{2}(\O_T).
$$

Therefore
$$
v^{\gamma }=-\frac{1 }{\mu}q^{\gamma}\text{ in } \O_T.
$$	
\textbf{Step 3.} To complete the proof of Theorem \ref{pb1}, we establish in the following Proposition, the estimate \eqref{vgamma} and the associated states.

\begin{prop}\label{pro2}
	Let $h\in L^2(\omega_T)$ be given. Let also $v^{\gamma}=v^\gamma(h) \in L^{2}(\O_T)$ be the solution of \eqref{lowregret} (or equivalently \eqref{II.14}). Let also $( v^{\gamma },\ y^{\gamma },\ S^{\gamma },\ p^{\gamma },\ q^{\gamma })$ be the unique  solution of \eqref{II.27}-\eqref{II.30}. Then, there exists a constant $C=C(T, \|a\|_{L^\infty(Q)}, \mu)>0$ independent of $\gamma$ such that
	\begin{subequations}\label{kam}
		\begin{alignat}{11}
		\|v^\gamma\|_{L^2(\O_T)}&\leq &C(\mu)(\|h\|_{L^2(\omega_T)}+\|z_d\|_{L^2(\O_d^T)}),\label{estug}\\
		\|y^\gamma\|_{L^2((0,T);H^1_k(\Omega))}&\leq &C(\|h\|_{L^2(\omega_T)}+\|z_d\|_{L^2(\O_d^T)}),\label{estyg}\\
		\|S^\gamma\|_{L^2((0,T);H^1_k(\Omega))}&\leq &C(\|h\|_{L^2(\omega_T)}+\|z_d\|_{L^2(\O_d^T)}),\label{estSg}\\
		\|p^\gamma\|_{L^2((0,T);H^1_k(\Omega))}&\leq &C(\|h\|_{L^2(\omega_T)}+\|z_d\|_{L^2(\O_d^T)}),\label{estpg}\\
		\left\|\frac{1}{\sqrt{\gamma}}p^\gamma\right\|_{L^2(Q)}&\leq &C(\|h\|_{L^2(Q_\omega)}+\|z_d\|_{L^2(\O_d^T)}),\label{estpgbis}\\
		\|q^\gamma\|_{L^2((0,T);H^1_k(\Omega))}&\leq &C(\|k\|_{L^2(\omega_T)}+\|z_d\|_{L^2(\O_d^T)}),\label{estqg}\\
		\dis \frac{1}{\sqrt{\gamma} }\left\|S(0,\cdot;v^\gamma\right\|_{L^{2}(\Omega)}&\leq &C(\mu)(\|h\|_{L^2(\omega_T)}+\|z_d\|_{L^2(\O_d^T)})\label{estS0g},\\
		\dis \left\|S(0,\cdot;v^\gamma\right\|_{L^{2}(\Omega)}&\leq &\sqrt{\gamma}C(\mu)(\|h\|_{L^2(\omega_T)}+\|z_d\|_{L^2(\O_d^T)}).\label{estS0gbis}
		\end{alignat}	
	\end{subequations}
\end{prop}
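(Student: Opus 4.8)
The plan is to establish the seven estimates in \eqref{kam} sequentially, exploiting the cascade structure of the optimality system \eqref{II.27}--\eqref{II.30}. The estimates are coupled, so the order matters: one cannot bound $v^\gamma$ in isolation because it depends on $q^\gamma$, which in turn depends on $p^\gamma$, which depends on $S^\gamma(0,\cdot)$. Therefore the natural strategy is to first extract the two "gamma-weighted" bounds \eqref{estpgbis} and \eqref{estS0g} directly from the optimality condition, and only then propagate through the parabolic energy estimate \eqref{esty1y2} of Theorem \ref{exis} applied to each equation of the system.

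\smallskip
\textbf{Step 1 (the key inequality).} First I would test the minimization: since $v^\gamma$ is the minimizer of $J^\gamma$, we have $J^\gamma(v^\gamma)\le J^\gamma(v)$ for every competitor $v$, and in particular for the admissible choice $v=-h\chi_\omega$ used already in Step 1 of the proof of Theorem \ref{theo1}. Recalling the explicit form \eqref{II.12a} of $J^\gamma$, evaluating at $v=-h\chi_\omega$ makes the state $y(h;v,0)$ vanish (since $\omega\varsubsetneq\O$), so $S(0,\cdot;h,v)=0$ and $J^\gamma(-h\chi_\omega)=\mu\|h\|^2_{L^2(\omega_T)}$. Combining this with the lower bound $J^\gamma(v^\gamma)\ge -\|z_d\|^2_{L^2(\O_d^T)}+\mu\|v^\gamma\|^2_{L^2(\O_T)}+\frac1\gamma\|S(0,\cdot;h,v^\gamma)\|^2_{L^2(\Omega)}$ (dropping the nonnegative data-fit term except what is needed) yields simultaneously
\[
\mu\|v^\gamma\|^2_{L^2(\O_T)}+\frac{1}{\gamma}\|S(0,\cdot;h,v^\gamma)\|^2_{L^2(\Omega)}\le \mu\|h\|^2_{L^2(\omega_T)}+\|z_d\|^2_{L^2(\O_d^T)}.
\]
This single inequality delivers at once \eqref{estug} (hence the claimed \eqref{vgamma}) and \eqref{estS0g}; the bound \eqref{estS0gbis} then follows trivially by multiplying through by $\sqrt\gamma$. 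This step is the crux of the whole proposition, because it is the only place where the $\gamma$-independence is genuinely created rather than merely propagated.

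\smallskip
\textbf{Step 2 (propagation through the cascade).} With $\|v^\gamma\|_{L^2(\O_T)}$ controlled, I would apply the well-posedness estimate \eqref{esty1y2} of Theorem \ref{exis} to system \eqref{II.28} with right-hand side $-\frac1\mu q^\gamma\chi_\O+h\chi_\omega$; but since $v^\gamma=-\frac1\mu q^\gamma$ by \eqref{II.27}, the source is exactly $v^\gamma\chi_\O+h\chi_\omega$, giving \eqref{estyg} directly from \eqref{estug}. Next, \eqref{II.29} is a backward equation with source $y^\gamma\chi_{\O_d}$, so the energy estimate yields \eqref{estSg} in terms of $\|y^\gamma\|$, already bounded. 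For \eqref{estpg} one applies \eqref{esty1y2} to \eqref{II.31}, whose only data is the initial condition $\frac1{\sqrt\gamma}S^\gamma(0,\cdot)$; the crucial point is that $\|\frac1{\sqrt\gamma}S^\gamma(0,\cdot)\|_{L^2(\Omega)}$ is precisely the quantity controlled in \eqref{estS0g}, so $\|p^\gamma\|$ inherits a $\gamma$-independent bound, and \eqref{estpgbis} follows similarly since $\frac1{\sqrt\gamma}p^\gamma$ solves the same equation with initial datum $\frac1\gamma S^\gamma(0,\cdot)$. Finally \eqref{estqg} comes from the energy estimate for \eqref{II.30}, whose source $(y^\gamma-z_d+\frac1{\sqrt\gamma}p^\gamma)\chi_{\O_d}$ is bounded in $L^2$ by combining \eqref{estyg}, the data $z_d$, and \eqref{estpgbis}.

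\smallskip
\textbf{Main obstacle.} The only delicate point is the bookkeeping of the $\frac1{\sqrt\gamma}$ factors: one must verify that $\frac1{\sqrt\gamma}p^\gamma$ and the source terms carrying $\frac1{\sqrt\gamma}$ never lose the uniformity in $\gamma$, which hinges entirely on the fact that the energy estimate \eqref{esty1y2} is linear in the data and that Step 1 absorbs the dangerous $\frac1\gamma$ into a $\gamma$-independent right-hand side. Apart from that, each step is a routine application of Theorem \ref{exis} together with linearity of the equations, so no new analytic tool (in particular no Carleman estimate) is required here; the constants accumulate multiplicatively but remain independent of $\gamma$ and depend only on $T$, $\|a_0\|_{L^\infty(Q)}$ and $\mu$.
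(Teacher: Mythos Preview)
Your Step~1 and most of Step~2 are correct and match the paper's argument almost verbatim: the comparison $J^\gamma(v^\gamma)\le J^\gamma(-h\chi_\omega)=\mu\|h\|^2_{L^2(\omega_T)}$ yields \eqref{estug} and \eqref{estS0g} (hence \eqref{estS0gbis}) in one stroke, and the energy estimate \eqref{esty1y2} then propagates through \eqref{II.28}, \eqref{II.29}, \eqref{II.31}, \eqref{II.30} to give \eqref{estyg}, \eqref{estSg}, \eqref{estpg}, \eqref{estqg} exactly as you describe.

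However, your argument for \eqref{estpgbis} has a genuine gap. You write that $\frac{1}{\sqrt\gamma}p^\gamma$ solves the homogeneous equation \eqref{II.31} with initial datum $\frac{1}{\gamma}S^\gamma(0,\cdot)$, and then invoke the energy estimate. But Step~1 only gives $\frac{1}{\sqrt\gamma}\|S^\gamma(0,\cdot)\|_{L^2(\Omega)}\le C$; it does \emph{not} control $\frac{1}{\gamma}\|S^\gamma(0,\cdot)\|_{L^2(\Omega)}$, which blows up like $\gamma^{-1/2}$ as $\gamma\to 0$. So the straightforward energy estimate cannot give a $\gamma$-independent bound on $\frac{1}{\sqrt\gamma}p^\gamma$. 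Your own ``Main obstacle'' paragraph flags precisely this bookkeeping issue but then dismisses it incorrectly.

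The paper circumvents this by a different mechanism: it returns to the first-order optimality condition \eqref{II.21} and, after substituting the duality relation \eqref{II.25}, obtains the identity
\[
\int_{\O_d^T}\frac{1}{\sqrt\gamma}p^\gamma\,\bar y(v)\,dx\,dt
=-\int_{\O_d^T}\bar y(v)\bigl(y^\gamma-z_d\bigr)\,dx\,dt-\mu\int_{\O_T}v^\gamma v\,dx\,dt,\qquad\forall\,v\in L^2(\O_T),
\]
where $\bar y(v)$ is the state driven by $v$ alone. The right-hand side is bounded, uniformly in $\gamma$, by $C\bigl(\|v\|_{L^2(\O_T)}+\|\bar y(v)\|_{L^2(Q)}\bigr)$ thanks to \eqref{estug} and the bound on $y^\gamma$. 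Viewing $\frac{1}{\sqrt\gamma}p^\gamma$ as a linear functional on the space $\mathcal E=\{\bar y(v):v\in L^2(\O_T)\}$ equipped with the graph norm then yields the uniform $L^2(Q)$ bound \eqref{estpgbis}. The point is that \eqref{estpgbis} is a consequence of the \emph{optimality} of $v^\gamma$ (through the Euler--Lagrange equation), not of parabolic regularity; this is why it holds only in $L^2(Q)$ rather than in $L^2((0,T);H^1_k(\Omega))$.
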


\begin{proof}
	It is clear that from \eqref{estS0g}, we have \eqref{estS0gbis}. Since $v^{\gamma}=v^{\gamma }(h) \in L^{2}(\O_T)$ is  the solution of \eqref{II.14}, we have:
	$$J^\gamma(v^\gamma)\leq J^\gamma(v),\, \forall v\in L^{2}(\O_T).$$
	Hence, we take $v=-h\chi_\omega$ and since $\omega\varsubsetneq\O$, we obtain
	$$
	J^\gamma(v^\gamma)\leq J^\gamma(-h)=\mu\|h\|^2_{L^2(\omega_T)}.
	$$
	It then follows from the definition of $J^\gamma$ given  by  \eqref{II.12}  that
	$$ 
	J(h;v^\gamma,0)+\frac{1}{\gamma }\left\| S(0,\cdot;v)\right\|_{L^{2}(\Omega)}^{2}\leq \|z_d\|^2_{L^2(\O_d^T)}+\mu\|h\|^2_{L^2(\omega_T)},
	$$
	which in view of  \eqref{Jk} implies that
	\begin{subequations}
		\begin{alignat}{11}
		\|y(h;v^\gamma,0)\|_{L^2(Q)}&\leq& \|z_d\|_{L^2(\O_d^T)}+\sqrt{\mu}\|h\|_{L^2(\omega_T)},\label{intera}\\
		\|v^\gamma\|_{L^2(\O_T)}&\leq& \frac{1}{\sqrt{\mu}}\|z_d\|_{L^2(\O_d^T)}+\|h\|_{L^2(\omega_T)},\label{interb}\\
		\frac{1}{\sqrt{\gamma} }\left\|S(0,\cdot;v^\gamma) \right\|_{L^{2}(\Omega)}&\leq& \|z_d\|_{L^2(\O_d^T)}+\sqrt{\mu}\|h\|_{L^2(\omega_T)}\label{interc}.
		\end{alignat}	
	\end{subequations}
Hence, we obtain from \eqref{interb} and \eqref{interc}, the relations \eqref{estug} and \eqref{estS0g}. In view of  \eqref{estug} and \eqref{II.28}, we deduce \eqref{estyg}. Using \eqref{II.29} and \eqref{intera}, we obtain \eqref{estSg}. From \eqref{estS0g} and \eqref{II.31}, we deduce \eqref{estpg}. 

Now, we want to establish the estimate \eqref{estpgbis} for $\dfrac{1}{\sqrt{\gamma }}%
p^{\gamma } $. Combining \eqref{II.21} and \eqref{II.25}, we have:

\begin{equation}\label{inter2}
\begin{array}{lll}
0&=&\dis \int_{\O_d^T}\bar{y}\left( y\left( h;v^{\gamma },0\right)-z_{d}\right)\dq+\mu\int_{\O_T}v^\gamma v\ \dq\\
\\
&+&\dis \frac{1}{\sqrt{\gamma}}\int_{\O_d^T} \bar{y}\,p^{\gamma } \dq,\quad \forall v\in L^2(\O_T).
\end{array}
\end{equation}
Consider the  following set
\begin{equation}\label{defEE}
\mathcal{E}=\left\{\bar{y}(v),\quad  v\in L^2(\O_T)\right\}.
\end{equation}
Then  $\mathcal{E}\subset L^2(Q)$. Define on $\mathcal{E}\times \mathcal{E}$ the inner product:

\begin{equation}\label{prodscalaireE}
\begin{array}{llll}
\dis \langle \bar{y}(v),\bar{y}(w)\rangle_{\mathcal{E}}= \dis \int_{\O_T} vw \, \dq+\dis \int_Q\bar{y}(v)\bar{y}(w)\ \dq,\ \ 
\forall\, \bar{y}(v),\bar{y}(w)\in \mathcal{E}.
\end{array}
\end{equation}
Then $\mathcal{E}$ endowed with the norm
\begin{equation}\label{normE}
\|\bar{y}(v)\|^2_{\mathcal{E}}= \|v\|_{L^2(\O_T)}^2+ \|\bar{y}(v)\|_{L^2(Q)}^2,\ \ \forall \bar{y}(v)\in \mathcal{E}
\end{equation}
is an Hilbert space.

We set $T_\gamma(v^\gamma)=\dis \frac{1}{\sqrt{\gamma}}p^{\gamma}$. Then in view of \eqref{inter2}, we have for any $ v\in L^2(\O_T),$
\begin{equation}\label{eqt81final1}
\begin{array}{lllll}
\dis \int_{\O_d^T}T_\gamma(v^\gamma)\bar{y}(v)\  \dq&=&-\dis \int_{\O_d^T}\bar{y}\left( y\left(h; v^{\gamma },0\right) -z_{d}\right)\ \dq-\mu\int_{\O_T}v^\gamma v\ \dq.
\end{array}
\end{equation}

In view of \eqref{estug} and \eqref{intera}, we have
\begin{equation}\label{eqt81final1inter1}
\begin{array}{llll}
\left|\dis-\dis \int_{\O_d^T}\bar{y}\left( y(h;v^{\gamma },0) -z_{d}\right\}\ \dq-\mu\int_{\O_T}v^\gamma v\ \dq\right|\leq C\|\bar{y}(v)\|_{\mathcal{E}},
\end{array}
\end{equation}
where $C=C\left(\|z_d\|_{L^2(\O_d^T)}, \|h\|_{L^2(\omega_T)},\mu\right)>0$ is a constant independent of $\gamma$.
It then follows from \eqref{eqt81final1} and \eqref{eqt81final1inter1} that
$$
\left|\int_{\O_d^T}T_\gamma(v^\gamma)\bar{y}(v)\  \dq\right|\leq C \|\bar{y}(v)\|_{\mathcal{E}}.
$$
This means that
\begin{equation*}\label{eqt81final1inter2}\left\|T_\gamma(v^\gamma)\right\|_{\mathcal{E}^\prime}=
\left\|\frac{1}{\sqrt{\gamma}}p^{\gamma}\right\|_{\mathcal{E}^\prime}\leq C.
\end{equation*}
In particular,
\begin{equation*}
\left\|\frac{1}{\sqrt{\gamma}}p^{\gamma}\right\|_{L^2(Q)}\leq C.
\end{equation*}
So, we get  the estimate \eqref{estpgbis}.\\
Using \eqref{estyg} and \eqref{estpgbis}, we deduce from \eqref{II.30} the estimate \eqref{estqg}.  The proof of Theorem \ref{pb1} is complete.

\end{proof}

\begin{remark}
	
Note that with the estimates \eqref{estug}-\eqref{estS0gbis} obtained in Proposition \ref{pro2}, we can extract subsequences still denoted
by $v^\gamma$, $y^\gamma$, $S^\gamma$, $p^\gamma$ and $q^\gamma$ such that when $\gamma \rightarrow 0$ we have the following convergences:
\begin{eqnarray*}
v^{\gamma}&\rightharpoonup& \hat{v}\text{ weakly in }L^{2}(\O_T), \\
y^{\gamma }&\rightharpoonup& \hat{y}\text{  weakly in }L^2((0,T);H^1_k(\Omega)), \\
S^{\gamma }&\rightharpoonup& \hat{S}\text{  weakly in }L^2((0,T);H^1_k(\Omega)),\\
q^{\gamma }&\rightharpoonup& \hat{q}\text{ weakly  in }L^2((0,T);H^1_k(\Omega)),\\
p^{\gamma }&\rightharpoonup& \hat{p}\text{  weakly in }L^2((0,T);H^1_k(\Omega)),\\
\dis \frac{1}{%
	\sqrt{\gamma} } S\left(0,x;h;v^\gamma\right) &\rightharpoonup& \varpi_1\text{  weakly in }L^{2}(\Omega),\\
S\left(0,x;h;v^\gamma\right) &\rightharpoonup& 0\text{  weakly in }L^{2}(\Omega), \label{22bis}\\
\frac{1}{\sqrt{\gamma}}p^{\gamma}&\rightharpoonup& \varpi_2\text{  weakly in }L^{2}(Q).
\text{}  \label{22ter}\\
\end{eqnarray*}	
Using the previous convergences, we can take the limit as $\gamma \rightarrow 0$ in the optimality system of Theorem \ref{pb1} and obtain that the low-regret control $v^\gamma$ converges toward the no-regret control $\hat{v}=\hat{v}(h) \in L^{2}(\O_T)$  which belongs to the set $\mathcal{U}$ (defined in Remark \ref{remNoregret}).
%
%
 However, the no-regret control $\hat{v}(h)$, the functions $\varpi_1$ and $\varpi_2$ do not depend linearly on the control $h$. This is why in Section \ref{null}, we study the null controllability of the state equation associated to the low-regret control $v^\gamma$, i.e. to the system \eqref{II.27}-\eqref{II.30}.
\end{remark}

\section{Carleman inequality}\label{Carleman}

In this section we establish an observability inequality that allows us to prove the null controllability of system \eqref{II.28}-\eqref{II.30}. We recall that the null controllability problem is related to the observability of a proper adjoint system. 
Now, for $\rho^\gamma_T\in L^2(\Omega)$, we consider the adjoint system of \eqref{II.28}-\eqref{II.30}:

\begin{equation}\label{rho}
\left\{
\begin{array}{rllll}
\dis -\rho_t^\gamma-\left(k(x)\rho^\gamma_x\right)_{x}+a_0\rho^\gamma &=&(\psi^\gamma+\phi^\gamma)\chi_{\O_d}& \mbox{in}& Q,\\
\dis \rho^\gamma(t,0)=\rho^\gamma(t,1)&=&0& \mbox{on}& (0,T), \\
\dis \rho^\gamma(T,\cdot)&=&\rho^\gamma_T&\mbox{in}&\Omega,
\end{array}
\right.
\end{equation}

\begin{equation}\label{psi}
\left\{
\begin{array}{rllll}
\dis \psi_t^\gamma-\left(k(x)\psi^\gamma_x\right)_{x}+a_0\psi^\gamma &=&0& \mbox{in}& Q,\\
\dis \psi^\gamma(t,0)=\psi^\gamma(t,1)&=&0& \mbox{on}& (0,T), \\
\dis \psi^\gamma(0,\cdot)&=&\dis \frac{1}{\sqrt{\gamma}}\zeta^\gamma(0,\cdot)&\mbox{in}&\Omega,
\end{array}
\right.
\end{equation}

\begin{equation} \label{phi}
\left\{
\begin{array}{rllll}
\dis \phi_t^\gamma-\left(k(x)\phi^\gamma_x\right)_{x}+a_0\phi^\gamma &=&\dis -\frac{1}{\mu}\rho^\gamma\chi_{\O}& \mbox{in}& Q,\\
\dis \phi^\gamma(t,0)=\phi^\gamma(t,1)&=&0& \mbox{on}& (0,T), \\
\dis \phi^\gamma(0,\cdot)&=&0&\mbox{in}&\Omega,
\end{array}
\right.
\end{equation}
and
\begin{equation} \label{zeta}
\left\{
\begin{array}{rllll}
\dis -\zeta_t^\gamma-\left(k(x)\zeta^\gamma_x\right)_{x}+a_0\zeta^\gamma &=&\dis \frac{1}{\sqrt{\gamma}}\phi^\gamma& \mbox{in}& Q,\\
\dis \zeta^\gamma(t,0)=\zeta^\gamma(t,1)&=&0& \mbox{on}& (0,T), \\
\dis \zeta^\gamma(T,\cdot)&=&0&\mbox{in}&\Omega.
\end{array}
\right.
\end{equation}
If we set $\varrho^\gamma=\phi^\gamma+\psi^\gamma$, then in view of \eqref{psi} and \eqref{phi}, $\varrho^\gamma$ is solution of
\begin{equation}\label{varrho}
	\left\{
	\begin{array}{rllll}
		\dis \varrho^\gamma_t-\left(k(x)\varrho^\gamma_x\right)_{x}+a_0\varrho^\gamma &=&\dis -\frac{1}{\mu}\rho^\gamma\chi_{\O}& \mbox{in}& Q,\\
		\dis \varrho^\gamma(t,0)=\varrho^\gamma(t,1)&=&0& \mbox{on}& (0,T), \\
		\dis \varrho^\gamma(0,\cdot)&=&\dis \frac{1}{\sqrt{\gamma}}\zeta^\gamma(0,\cdot)&\mbox{in}&\Omega,
	\end{array}
	\right.
\end{equation}
where $\rho^\gamma$ is the solution of 
\begin{equation}\label{rho1}
	\left\{
	\begin{array}{rllll}
		\dis -\rho^\gamma_t-\left(k(x)\rho^\gamma_x\right)_{x}+a_0\rho^\gamma &=&\dis \varrho^\gamma\chi_{\O_d}& \mbox{in}& Q,\\
		\dis \rho^\gamma(t,0)=\rho^\gamma(t,1)&=&0& \mbox{on}& (0,T), \\
		\dis \rho^\gamma(T,\cdot)&=&\rho_T&\mbox{in}&\Omega.
	\end{array}
	\right.
\end{equation}

\begin{remark}$ $
	
For the sake of simplicity, in this section, we will omit the gamma in system \eqref{rho}-\eqref{rho1}. Instead of $\rho^\gamma$, $\psi^\gamma$, $\phi^\gamma$ and $\zeta^\gamma$, we will be using $\rho$, $\psi$, $\phi$ and $\zeta$ throughout this section.	
\end{remark}
Classically, to establish Carleman inequality, we state first some weight functions according to the nature of the model. In our case, these functions are stated in follow:\\
since $\O_d\cap \omega\neq \emptyset$, then, there exists a non-empty open set $\omega_1\Subset \O_d\cap \omega$. Let us introduce the function $\sigma$ given by
\begin{equation}\label{sigma}
\left\{
\begin{array}{llll}
\sigma\in \mathcal{C}^2([0,1]), \sigma(x)>0\quad\text{in}\quad (0,1),\quad \sigma(0)=\sigma(1)=0,\\
\sigma_x(x)\neq 0\quad \text{in}\quad [0,1]\setminus\omega_0,
\end{array}
\right.
\end{equation}
where $\omega_0\Subset\omega_1\Subset\O_d\cap \omega$ is an open subset. We refer to \cite{FursikovImanuvilov}  for the existence of such a function $\sigma$. \\
Let  $\tau\in [0,1)$ be as in the assumption \eqref{k} and  $r, d\in \R$ be such that 
\begin{equation}\label{condrd}
r\geq \frac{4ln(2)}{\|\sigma\|_{\infty}} \hbox{ and } d\geq \frac{5}{k(1)(2-\tau)}.
\end{equation}
If $r$ and $d$ verify \eqref{condrd}, then the interval $\dis I= \left[\frac{k(1)(2-\tau)(e^{2r\|\sigma\|_{\infty}}-1)}{d\ k(1)(2-\tau)-1},
\frac{4(e^{2r\|\sigma\|_{\infty}}-e^{r\|\sigma\|_{\infty}})}{3d}\right]$
is non-empty (see  \cite{birba2016}). We can then choose $\lambda$ in this interval and for $r,\ d$ satisfying \eqref{condrd}; let's define the following functions:
\begin{equation}\label{functcarl}
\left\{
\begin{array}{llll}
\dis \Theta(t)=\frac{1}{(t(T-t))^4},\quad \forall t\in (0,T),\ \ \ \delta(x):=\dis \lambda\left(\int_{0}^{x}\frac{y}{k(y)}\ dy-d\right),\\
\\
\dis\varphi(t,x):=\Theta(t)\delta(x),\quad \eta(t,x):=\Theta(t)e^{r\sigma(x)},\\
\\
\Psi(x)=\left(e^{r\sigma(x)}-e^{2r\|\sigma\|_\infty}\right),\ \ \ \Phi(t,x):=\Theta(t)\Psi(x).\\
\end{array}
\right.
\end{equation}
Using the second assumption in \eqref{condrd} on $d$, we observe that $\delta(x)<0$ for all $x\in [0,1]$. Moreover, we have that $\Theta(t)\to +\infty$ as $t$ tends to $0^+$ and $T^-$.  Under the assumptions \eqref{condrd} and the choice of the parameter $\lambda$, the weight functions $\varphi$ and $\Phi$ defined by \eqref{functcarl} satisfy the following inequalities which are needed in the sequel:
	
	\begin{equation}\label{ineqphi}
		\left\{
		\begin{array}{llll}
			\dis \frac{4}{3}\Phi\leq \varphi\leq\Phi\ \ \mbox{on}\ Q,\\
			\dis 2\Phi\leq \varphi\ \ \mbox{on}\ Q.
		\end{array}
		\right.
	\end{equation}	

The following result is the Caccioppoli's inequality associated to systems \eqref{varrho}-\eqref{rho1}. This result will be also useful for the rest of the paper.

\begin{leme}(Caccioppoli's inequality)\cite{maniar2011}\label{cacc}$ $
	
	Let $\omega^\prime$ be a subset of $\omega_1$ such that $\omega^\prime\Subset\omega_1$. Then, there exists a positive constant $C$ such that
	\begin{equation}\label{caccio}
		\int_{0}^{T}\int_{w^\prime}(\rho^2_x+\varrho_x^2)\ e^{2s\varphi}\,\dq \leq C\int_0^T\int_{\omega_1}s^2\Theta^2(\rho^2+\varrho^2)\ e^{2s\varphi}\ \dq,
	\end{equation}	
	where the weight functions $\varphi$ and $\Theta$ are defined by \eqref{functcarl}.
\end{leme}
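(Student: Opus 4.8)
The plan is to prove the two halves of \eqref{caccio}, namely the bounds on $\int_0^T\!\int_{\omega'}\rho_x^2\,e^{2s\varphi}\,\dq$ and on $\int_0^T\!\int_{\omega'}\varrho_x^2\,e^{2s\varphi}\,\dq$, by the same energy argument and then add them. The reason the two can be decoupled is that systems \eqref{varrho} and \eqref{rho1} share the same principal part and differ only in the sign of the time derivative and in the source term, and since both $\rho^2$ and $\varrho^2$ are already present on the right-hand side of \eqref{caccio}, the coupling sources cause no trouble. Throughout I would exploit that $\omega_1\Subset\O_d\cap\omega\subset(0,1)$, so $\overline{\omega_1}$ is bounded away from the degeneracy point $x=0$ (and from $x=1$); hence $k\geq k_0>0$ on $\overline{\omega_1}$ and the quantities $\delta,\delta_x,\sigma$ and their derivatives are bounded there. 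I would also use repeatedly that $\Theta(t)=(t(T-t))^{-4}$ is bounded below by a positive constant on $(0,T)$ while blowing up as $t\to0^+,T^-$.

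First I would fix a cut-off $\xi\in\mathcal{C}_c^\infty(\omega_1)$ with $0\le\xi\le1$ and $\xi\equiv1$ on $\omega'$ (possible since $\omega'\Subset\omega_1$). For the $\rho$-estimate, I would multiply the first equation of \eqref{rho1} by $\xi^2 e^{2s\varphi}\rho$ and integrate over $Q$. Integrating the diffusion term by parts in $x$ (the $x$-boundary terms vanish because $\xi$ has compact support in $\omega_1$) produces the good term $\int_Q k\,\xi^2 e^{2s\varphi}\rho_x^2\,\dq$ together with a cross term $\int_Q k\,\rho_x\rho\,\partial_x(\xi^2 e^{2s\varphi})\,\dq$. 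Integrating the $-\rho_t$ term by parts in $t$ produces endpoint contributions at $t=0,T$ that vanish, since $e^{2s\varphi}\to0$ exponentially there ($\delta<0$ and $\Theta\to+\infty$), plus the interior term $\tfrac12\int_Q\rho^2\xi^2\,\partial_t(e^{2s\varphi})\,\dq$.

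The core of the argument is to absorb every remaining term either into the good term or into the right-hand side of \eqref{caccio}. Writing $\partial_x(\xi^2 e^{2s\varphi})=(2\xi\xi_x+2s\xi^2\varphi_x)e^{2s\varphi}$ and applying Young's inequality, I would send a fraction $\tfrac12\int_Q k\xi^2 e^{2s\varphi}\rho_x^2$ back to the left, while the complementary terms are controlled on $\overline{\omega_1}$ by $C\int_Q(\xi_x^2+s^2\varphi_x^2)\rho^2 e^{2s\varphi}\,\dq$; since $\varphi_x=\Theta\delta_x$ with $\delta_x$ bounded on $\overline{\omega_1}$, we have $\varphi_x^2\le C\Theta^2$, so this is $\le C\int_Q s^2\Theta^2\rho^2 e^{2s\varphi}\,\dq$. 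For the interior time term I would use $|\partial_t e^{2s\varphi}|\le Cs\Theta^{5/4}e^{2s\varphi}$ on $\overline{\omega_1}$ together with $\Theta^{5/4}\le C\Theta^2$ (valid because $\Theta$ is bounded below) to bound it again by $C\int_Q s^2\Theta^2\rho^2 e^{2s\varphi}\,\dq$. The source term $\int_Q \varrho\chi_{\O_d}\xi^2 e^{2s\varphi}\rho\,\dq$ is treated by Young's inequality, giving $C\int_Q(\rho^2+\varrho^2)\xi^2 e^{2s\varphi}\,\dq\le C\int_Q s^2\Theta^2(\rho^2+\varrho^2)e^{2s\varphi}\,\dq$ (using once more $s^2\Theta^2\ge c>0$). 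Dividing by $k_0$ and using $\xi\equiv1$ on $\omega'$ yields the desired bound for $\int_0^T\!\int_{\omega'}\rho_x^2 e^{2s\varphi}\,\dq$.

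Finally I would repeat the same computation for $\varrho$, now testing \eqref{varrho} with $\xi^2 e^{2s\varphi}\varrho$; the only changes are the opposite sign of the time derivative (which again affects only the endpoint terms, which vanish) and the source $-\tfrac1\mu\rho\chi_{\O}$, which on $\omega_1\subset\omega\subset\O$ equals $-\tfrac1\mu\rho$ and is absorbed in the same way. Adding the two resulting estimates gives \eqref{caccio}. I expect the difficulty to be bookkeeping rather than conceptual: the delicate points are matching every error term to the prescribed weight $s^2\Theta^2$ (which crucially relies on both $\Theta\ge\Theta_{\min}>0$ and $\varphi_x^2\le C\Theta^2$ on $\overline{\omega_1}$) and justifying the vanishing of the temporal endpoint terms, which formally requires enough regularity of $\rho,\varrho$ to integrate by parts in $t$; the latter is handled in the standard way, by first establishing the estimate for smooth solutions and passing to the limit by density.
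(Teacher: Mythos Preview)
The paper does not give its own proof of this lemma; it simply cites \cite{maniar2011}. Your argument is the standard Caccioppoli energy method---multiply each equation by $\xi^2 e^{2s\varphi}$ times the solution, integrate by parts in $x$ to produce the good gradient term, and absorb the commutator and source contributions using Young's inequality together with the local bounds $k\ge k_0>0$, $|\varphi_x|\le C\Theta$, $|\varphi_t|\le C\Theta^{5/4}\le C\Theta^2$ on $\overline{\omega_1}$, and the vanishing of $e^{2s\varphi}$ at $t=0,T$. This is exactly how such inequalities are proved in the cited reference, and your write-up is correct.

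Two minor points worth tightening: (i) the interior time term does change sign between the forward and backward equations, not only the endpoint terms; but since you take absolute values before bounding by $Cs\Theta^{5/4}$ this is harmless, so just phrase it accordingly. (ii) When you invoke $s^2\Theta^2\ge c>0$ to absorb the lower-order pieces, make explicit that this uses $s\ge s_0$ for some fixed $s_0>0$ (which is how the lemma is applied downstream in Theorem~\ref{thm1}); otherwise the constant $C$ would blow up as $s\to0$.
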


We state the following carleman type inequality in the degenerate case, proved in \cite{cannarsa2005, cannarsa2008}.

\begin{prop}\label{prp1}$ $
	
Consider the following system with $f_1\in L^2(Q)$ and $z_T\in L^2(\Omega)$,

\begin{equation}\label{defz}
\left\{
\begin{array}{rllll}
\dis -z_t-(k(x)z_x)_x&=&f_1 &\mbox{in}&Q,\\
z(t,0)=z(t,1)&=&0  &\mbox{on}& (0,T),\\
z(T,\cdot) &=&z_T  &\mbox{in}&  \Omega.
\end{array}
\right.
\end{equation}
Then, there exist two positive constants $C$ and $s_0$, such that every solution of \eqref{defz} satisfies, for all $s\geq s_0$, the following inequality:

\begin{eqnarray}\label{ineqcarl}
\dis \int_{Q}\left(s^3\Theta^3\frac{x^2}{k(x)}z^2+s\Theta k(x)z_x^2\right)e^{2s\varphi}\, \dq\leq C\int_{Q}|f_1|^2e^{2s\varphi}\,\dq\nonumber\\
\dis +Csk(1)\int_{0}^{T}\Theta z_x^2(t,1)e^{2s\varphi(t,1)}\,dt,
\end{eqnarray}	
where $\Theta$ and $\varphi$ are given by \eqref{functcarl}.
\end{prop}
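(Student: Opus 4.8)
The plan is to follow the classical Fursikov--Imanuvilov method, adapted to the degenerate operator as in \cite{cannarsa2005, cannarsa2008}. First I would conjugate the equation by the weight: setting $w := e^{s\varphi}z$, the degenerate operator $Lz := -z_t-(k(x)z_x)_x$ is transformed into $L_s w := e^{s\varphi}L(e^{-s\varphi}w)$, and a direct computation splits it as $L_s w = L_s^+ w + L_s^- w$, where $L_s^+$ is the formally self-adjoint part and $L_s^-$ the skew-adjoint part. Writing $g := e^{s\varphi}f_1$, the equation reads $L_s^+ w + L_s^- w = g$ in $L^2(Q)$, so that
\[
\|L_s^+ w\|_{L^2(Q)}^2 + \|L_s^- w\|_{L^2(Q)}^2 + 2\langle L_s^+ w, L_s^- w\rangle_{L^2(Q)} = \|g\|_{L^2(Q)}^2 .
\]
Since the two squared norms are nonnegative, the whole estimate rests on extracting a good lower bound from the cross term $\langle L_s^+ w, L_s^- w\rangle_{L^2(Q)}$.

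The second step is to expand this scalar product by repeated integration by parts in $t$ and $x$. Using $\varphi_x=\Theta\delta'=\lambda\Theta\,x/k(x)$ (so that $k\varphi_x=\lambda\Theta x$ stays bounded despite the degeneracy), the expansion yields a distributed integral over $Q$ whose dominant contribution is, up to positive multiplicative constants, precisely
\[
\int_Q\Big(s^3\Theta^3\frac{x^2}{k(x)}w^2 + s\Theta k(x)w_x^2\Big)\,\dq ,
\]
together with boundary contributions on $\{t=0\}$, $\{t=T\}$, $\{x=0\}$ and $\{x=1\}$. The temporal boundary terms vanish because $\Theta(t)\to+\infty$ as $t\to0^+,T^-$, which forces $w$ and its derivatives to vanish at $t=0,T$. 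At the degenerate endpoint $x=0$ the boundary terms are killed by $k(0)=0$ together with the structural condition $xk'(x)\le\tau k(x)$ from \eqref{k}; this is the step that genuinely differs from the nondegenerate case and must be checked with care. The only surviving boundary term is the one at $x=1$, of the form $Csk(1)\int_0^T\Theta\, w_x^2(t,1)\,dt$.

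The third step is to absorb the remaining lower-order distributed terms --- in particular those scaling like $\int_Q s\Theta\frac{k(x)}{x^2}w^2\,\dq$ --- into the dominant $s^3$-term. Here I would invoke the Hardy--Poincar\'e inequality stated above to bound $\int_0^1\frac{k(x)}{x^2}w^2\,dx$ by $\overline{C}\int_0^1 k(x)w_x^2\,dx$, and then use the gap in powers of $s$ (and the inequalities \eqref{ineqphi} relating $\varphi$ and $\Phi$) to fix $s_0$ large enough that every lower-order term is dominated. Undoing the substitution $w=e^{s\varphi}z$ converts the distributed lower bound into the claimed weighted integral of $z$ and the surviving boundary term at $x=1$ into $Csk(1)\int_0^T\Theta\, z_x^2(t,1)e^{2s\varphi(t,1)}\,dt$ (using $z(t,1)=0$, so that $w_x(t,1)=e^{s\varphi(t,1)}z_x(t,1)$), which gives \eqref{ineqcarl}.

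The hard part will be the bookkeeping of the integration by parts near the degeneracy at $x=0$: one must verify that each boundary term there either vanishes through $k(0)=0$ or is controlled, and that the leading distributed terms carry the correct sign. This is exactly where the hypotheses on $k$ in \eqref{k}, the sign property $\delta(x)<0$, and the weighted Hardy--Poincar\'e inequality are indispensable, the latter because the classical Poincar\'e inequality is unavailable for the degenerate weight $k(x)/x^2$.
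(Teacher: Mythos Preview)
The paper does not actually prove this proposition: it is stated as a known Carleman estimate and attributed to \cite{cannarsa2005, cannarsa2008}, with no argument given. So there is no ``paper's own proof'' to compare against.

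That said, your sketch is a faithful outline of how the result is obtained in those references: conjugate by $e^{s\varphi}$, split the conjugated operator into self-adjoint and skew-adjoint parts, expand the cross term, and exploit the identity $k(x)\varphi_x=\lambda\Theta x$ to keep the integration by parts under control at the degenerate endpoint. The identification of which boundary terms survive (only the flux at $x=1$) and why the temporal ones vanish is correct.

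Two small points. First, the inequalities \eqref{ineqphi} play no role here: they compare $\varphi$ with the \emph{nondegenerate} weight $\Phi$ and are only used later when gluing the degenerate and nondegenerate Carleman estimates together (Theorem~\ref{thm1}). The proof of \eqref{ineqcarl} uses $\varphi$ alone. Second, the Hardy--Poincar\'e inequality is not really needed at this stage; the lower-order distributed terms are absorbed directly by the $s^3\Theta^3\frac{x^2}{k(x)}$ term for $s$ large, using only that $\Theta$ is bounded below. Hardy--Poincar\'e enters in the paper at the \emph{next} step (Proposition~\ref{prp2}), to handle the zero-order potential $a_0 z$. Neither point affects the validity of your outline.
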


The second result is stated in the following proposition.

\begin{prop}\label{prp2}$ $
	
	Consider the following system with $f\in L^2(Q)$ and $z_T\in L^2(\Omega)$,
	
	\begin{equation}\label{defz1}
	\left\{
	\begin{array}{rllll}
	\dis -z_t-(k(x)z_x)_x+a_0z&=&f &\mbox{in}&Q,\\
	z(t,0)=z(t,1)&=&0  &\mbox{on}& (0,T),\\
	z(T,\cdot) &=&z_T  &\mbox{in}&  \Omega.
	\end{array}
	\right.
	\end{equation}
	Then, there exist two positive constants $C$ and $s_1$, such that every solution of \eqref{defz1} satisfies, for all $s\geq s_1$, the following inequality:
	
	\begin{eqnarray}\label{ineqcarl1}
	\dis \int_{Q}\left(s^3\Theta^3\frac{x^2}{k(x)}z^2+s\Theta k(x)z_x^2\right)e^{2s\varphi}\,\dq\leq C\int_{Q}|f|^2e^{2s\varphi}\,\dq\nonumber\\
	\dis +Csk(1)\int_{0}^{T}\Theta z_x^2(t,1)e^{2s\varphi(t,1)}\,dt.
	\end{eqnarray}
	
\end{prop}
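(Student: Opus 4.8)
The plan is to derive \eqref{ineqcarl1} from the already-established Carleman inequality \eqref{ineqcarl} of Proposition~\ref{prp1} by treating the lower-order potential term $a_0 z$ as a perturbation that can be absorbed into the left-hand side. The key observation is that systems \eqref{defz} and \eqref{defz1} differ only by the term $a_0 z$: indeed, if $z$ solves \eqref{defz1} with source $f$, then $z$ solves \eqref{defz} with source $f_1 := f - a_0 z$. First I would apply \eqref{ineqcarl} to $z$ with this choice of $f_1$, obtaining
\begin{equation*}
\int_{Q}\left(s^3\Theta^3\frac{x^2}{k(x)}z^2+s\Theta k(x)z_x^2\right)e^{2s\varphi}\,\dq\leq C\int_{Q}|f-a_0z|^2e^{2s\varphi}\,\dq
+Csk(1)\int_{0}^{T}\Theta z_x^2(t,1)e^{2s\varphi(t,1)}\,dt.
\end{equation*}

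Next I would estimate the perturbed source term. Using the elementary inequality $|f-a_0z|^2\leq 2|f|^2+2|a_0|^2z^2\leq 2|f|^2+2\|a_0\|_\infty^2 z^2$, the right-hand side splits into the desired $\dis 2C\int_Q |f|^2 e^{2s\varphi}\,\dq$ plus the extra contribution $\dis 2C\|a_0\|_\infty^2\int_Q z^2 e^{2s\varphi}\,\dq$. The heart of the argument is that this last integral can be dominated by the first term on the left-hand side. Indeed, since $\Theta(t)\to+\infty$ as $t\to 0^+,T^-$ and $\dis \frac{x^2}{k(x)}$ is bounded below away from zero on $(0,1]$ under the degeneracy assumption \eqref{k} (note $x^2/k(x)\geq x^2/(\,\cdot\,)$ is controlled via $xk'(x)\le \tau k(x)$, which gives $k(x)\leq k(1)x^\tau$ and hence $x^2/k(x)\geq x^{2-\tau}/k(1)$), one has for $s$ large enough
\begin{equation*}
2C\|a_0\|_\infty^2 z^2 e^{2s\varphi}\leq \frac12\, s^3\Theta^3\frac{x^2}{k(x)}z^2 e^{2s\varphi}\qquad\text{on } Q,
\end{equation*}
because $s^3\Theta^3 x^2/k(x)$ is, for each fixed $x$, unbounded in $s$ and in $t$ near the endpoints, while $2C\|a_0\|_\infty^2$ is a fixed constant. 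I would therefore choose $s_1\geq s_0$ large enough (depending on $C$, $\|a_0\|_\infty$, and the lower bound for $x^{2-\tau}/k(1)$) so that this pointwise domination holds.

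Finally I would absorb: moving the half-sized term to the left-hand side gives
\begin{equation*}
\frac12\int_{Q}\left(s^3\Theta^3\frac{x^2}{k(x)}z^2+s\Theta k(x)z_x^2\right)e^{2s\varphi}\,\dq\leq 2C\int_{Q}|f|^2e^{2s\varphi}\,\dq
+Csk(1)\int_{0}^{T}\Theta z_x^2(t,1)e^{2s\varphi(t,1)}\,dt,
\end{equation*}
and multiplying through by $2$ and relabelling the constant yields precisely \eqref{ineqcarl1}. I expect the main obstacle to be the absorption step: one must verify that $s^3\Theta^3 x^2/k(x) e^{2s\varphi}$ genuinely dominates $z^2 e^{2s\varphi}$ uniformly on all of $Q$, which requires a careful lower bound on $x^2/k(x)$ near the degeneracy point $x=0$ (using the structural condition $xk'(x)\le\tau k(x)$ from \eqref{k}) together with the blow-up of $\Theta$ at $t=0,T$; once these two facts are in hand the threshold $s_1$ can be fixed and the absorption is immediate.
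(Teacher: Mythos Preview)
Your starting point is exactly the paper's: apply Proposition~\ref{prp1} with $f_1=f-a_0z$ and expand $|f_1|^2\le 2|f|^2+2\|a_0\|_\infty^2 z^2$. The divergence---and the gap---is in the absorption step.

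You assert that $x^2/k(x)$ is bounded below away from zero on $(0,1]$, but the very inequality you derive, $x^2/k(x)\ge x^{2-\tau}/k(1)$, shows the opposite: since $\tau<1$ we have $2-\tau>1$, so $x^{2-\tau}\to 0$ as $x\to 0^+$. For the prototype $k(x)=x^\alpha$ with $0<\alpha<1$ one has $x^2/k(x)=x^{2-\alpha}\to 0$. Hence the pointwise domination
\[
2C\|a_0\|_\infty^2\,z^2 e^{2s\varphi}\le \tfrac12\, s^3\Theta^3\frac{x^2}{k(x)}\,z^2 e^{2s\varphi}
\]
cannot hold uniformly near $x=0$ for any finite $s$, because the weight $s^3\Theta^3\,x^2/k(x)$ vanishes there while the left side need not. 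The blow-up of $\Theta$ at $t=0,T$ does not help: for $t$ in a compact subinterval of $(0,T)$, $\Theta$ is bounded and the $x$-degeneracy wins.

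The paper closes this gap by a genuinely different mechanism: instead of a pointwise bound, it invokes the Hardy--Poincar\'e inequality~\eqref{hardy} applied to $e^{s\varphi}z$. Using that $x^2/k(x)$ is non-decreasing (so $1\le \frac{1}{k(1)}\frac{k(x)}{x^2}$), one writes
\[
\int_Q z^2 e^{2s\varphi}\,\dq\le \frac{1}{k(1)}\int_Q\frac{k(x)}{x^2}(e^{s\varphi}z)^2\,\dq
\le \frac{\overline C}{k(1)}\int_Q k(x)\bigl|(e^{s\varphi}z)_x\bigr|^2\,\dq.
\]
Expanding the derivative (note $\varphi_x=\lambda\Theta\,x/k(x)$) produces the two terms
\[
C\int_Q s^2\lambda^2\Theta^2\,\frac{x^2}{k(x)}\,z^2 e^{2s\varphi}\,\dq
\quad\text{and}\quad
C\int_Q k(x)\,z_x^2\, e^{2s\varphi}\,\dq,
\]
which, after using $1\le M_1\Theta$ and $\Theta^2\le M_1\Theta^3$, carry exactly the same weights as the left-hand side of~\eqref{ineqcarl} but with powers $s^2$ and $s^0$ in place of $s^3$ and $s^1$. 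These are now genuinely lower order in $s$ and can be absorbed by choosing $s_1=\max(s_0,2C_1)$. The Hardy--Poincar\'e step is not cosmetic: it is what transfers the degenerate weight $x^2/k(x)$ onto the $z^2$ term so that absorption becomes possible.
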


\begin{proof}
 To show the inequality \eqref{ineqcarl1}, we apply the last inequality \eqref{ineqcarl} for the function $f_1=f-a_0z$.
Hence, there are two positive constants $C$ and $s_0$, such that for all $s\geq s_0$, the following inequality holds:
\begin{eqnarray*}
\dis \int_{Q}\left(s^3\Theta^3\frac{x^2}{k(x)}z^2+s\Theta k(x)z_x^2\right)e^{2s\varphi}\,\dq\leq C\int_{Q}|f_1|^2e^{2s\varphi}\,\dq\nonumber\\
\dis +Csk(1)\int_{0}^{T}\Theta z_x^2(t,1)e^{2s\varphi(t,1)}\,dt.
\end{eqnarray*}
On the other hand, using Young inequality, we have 
$$
\int_{Q}|f_1|^2e^{2s\varphi}\,\dq\leq 2\left(\int_{Q}|f|^2e^{2s\varphi}\,\dq+\|a_0\|^2_{\infty}\int_{Q}|z|^2e^{2s\varphi}\,\dq\right).
$$
Now, applying Hardy-Poincaré inequality \eqref{hardy} to the function $e^{s\varphi}z$, the fact that $\dis x\longmapsto\frac{x^2}{k(x)}$ is non-decreasing and using the definition of $\varphi$, we obtain

\begin{equation*}
\begin{array}{rll}
\dis \int_{Q}|z|^2e^{2s\varphi}\,\dq &\leq&\dis  \frac{1}{k(1)}\int_Q\frac{k(x)}{x^2}|z|^2e^{2s\varphi}\,\dq\\
&\leq&\dis  \frac{C}{k(1)}\int_Qk(x)\left((e^{s\varphi}z)_x\right)^2\,\dq\\
&\leq&\dis  \frac{C}{k(1)}\left(\int_Qs^2\lambda^2\Theta^2\frac{x^2}{k(x)}e^{2s\varphi}z^2\,\dq+\int_Qk(x)e^{2s\varphi}z_x^2\,\dq\right).
\end{array}
\end{equation*}
Thus,
\begin{equation*}
\begin{array}{rll}
\dis \int_{Q}|f_1|^2e^{2s\varphi}\,\dq&\leq&\dis 2\int_{Q}|f|^2e^{2s\varphi}\,\dq\\
\dis &&\dis +2\|a_0\|^2_{\infty}\frac{C}{k(1)}\left(\int_Qs^2\lambda^2\Theta^2\frac{x^2}{k(x)}e^{2s\varphi}z^2\,\dq+\int_Qk(x)e^{2s\varphi}z_x^2\,\dq\right)
\end{array}
\end{equation*}
Using the fact that there exist a positive constant $M_1$ such that 
\begin{equation}\label{theta}
1\leq M_1\Theta\ \ \mbox{and}\ \ \Theta^2\leq M_1\Theta^3,	
\end{equation}
we obtain 
\begin{equation*}
\begin{array}{rll}
&&\dis \int_{Q}\left(s^3\Theta^3\frac{x^2}{k(x)}z^2+s\Theta k(x)z_x^2\right)e^{2s\varphi}\,\dq\\ 
&&\leq \dis 2C\int_{Q}|f|^2e^{2s\varphi}\,\dq
\dis +Csk(1)\int_{0}^{T}\Theta z_x^2(t,1)e^{2s\varphi(t,1)}\,dt\\
&&\dis+C_1 \int_{Q}\left(s^2\Theta^3\frac{x^2}{k(x)}z^2+\Theta k(x)z_x^2\right)e^{2s\varphi}\,\dq.
\end{array}
\end{equation*}
Taking $s\geq s_1=\max(s_0, 2C_1)$, we obtain \eqref{ineqcarl1}. This completes the proof.	
\end{proof}

 The next result is concerned with a carleman type inequality in non degenerate case.

\begin{prop}\label{propcarl2}\cite{FursikovImanuvilov}$ $
	
	We consider the following system with  $f\in L^2(Q)$, $a_0\in L^\infty(Q)$ and $k>0$ belong to $\mathcal{C}^2([0,1])$:
	\begin{equation}\label{carl4}
	\left\{
	\begin{array}{rllll}
	\dis -z_t-(k(x)z_x)_x+a_0z &=&f &\mbox{in}&Q_b,\\
	z(t,b_1)=z(t,b_2)&=&0  &\mbox{on}& (0,T),\\
	\end{array}
	\right.
	\end{equation}
	where $Q_b:=(0,T)\times (b_1,b_2)$, $(b_1,b_2)\subset [0,1]$.
	Then, there exist two positive constants $C$ and $s_2$, such that every solution of \eqref{carl4} satisfies, for all $s\geq s_2$, the following inequality:
	
	\begin{eqnarray}\label{ineqcarl2}
	\int_{Q}(s^3\eta^3z^2+s\eta z_x^2)e^{2s\Phi}\,\dq \leq C\left(\int_{Q_b}|f|^2e^{2s\Phi}\,\dq
	+\int_{0}^{T}\int_{\omega_1}s^3\eta^3z^2e^{2s\Phi}\, \dq\right),
	\end{eqnarray}
	
	where the function $\eta$ and $\Phi$ are defined by \eqref{functcarl}.
\end{prop}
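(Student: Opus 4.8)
The plan is to follow the classical Fursikov--Imanuvilov method, which applies directly here because the hypotheses $k\in\mathcal{C}^2([0,1])$ and $k>0$ make the operator $z\mapsto -z_t-(k(x)z_x)_x$ uniformly parabolic on $Q_b$; there is no degeneracy to manage, in contrast with Propositions \ref{prp1} and \ref{prp2}. First I would reduce to the case $a_0\equiv 0$: setting $f_1:=f-a_0z$ and establishing the estimate for the potential-free operator, one then absorbs $2\|a_0\|_\infty^2\int_{Q_b}z^2e^{2s\Phi}\,\dq$ into the dominant term $s^3\int_{Q_b}\eta^3z^2e^{2s\Phi}\,\dq$ on the left-hand side for $s$ large, using $1\le M_1\Theta$ together with $\Theta\le\eta$ (the latter since $e^{r\sigma}\ge 1$) as in \eqref{theta}. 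This is parallel to, but simpler than, the proof of Proposition \ref{prp2}, since in the non-degenerate case no Hardy--Poincar\'e inequality is required.

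Next I would conjugate the operator by the weight. Set $w=e^{s\Phi}z$, so that $w$ vanishes on the lateral boundary $x\in\{b_1,b_2\}$, and, because $\Theta(t)\to+\infty$ as $t\to 0^+,T^-$ forces $e^{s\Phi}\to 0$, also at $t=0,T$ together with all the derivatives needed in the integrations by parts. A direct computation gives $e^{s\Phi}\big(-\partial_t-\partial_x(k\partial_x)\big)(e^{-s\Phi}w)=e^{s\Phi}f_1$, and I would split the left-hand side as $L_s^+w+L_s^-w$, where $L_s^+$ is the formally self-adjoint part and $L_s^-$ the formally skew-adjoint part, so that
$$
\|L_s^+w\|_{L^2(Q_b)}^2+\|L_s^-w\|_{L^2(Q_b)}^2+2\big(L_s^+w,\,L_s^-w\big)_{L^2(Q_b)}=\|e^{s\Phi}f_1\|_{L^2(Q_b)}^2.
$$
The heart of the matter is the expansion of the cross term $2(L_s^+w,L_s^-w)$ by integration by parts in $x$ and $t$. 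The dominant positive contributions are of the form $Cs^3\int\eta^3w^2$ and $Cs\int k\,\eta\,w_x^2$, whose positivity rests on $\sigma_x\neq 0$, i.e. on $[0,1]\setminus\omega_0$ by \eqref{sigma}; the contributions at $t=0,T$ vanish by the decay of the weight, and the lateral boundary terms at $b_1,b_2$ are controlled using $w=0$ there together with the sign of $\Phi_x$.

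The region $\omega_0$ where $\sigma_x$ may vanish is then handled by the standard localization: choosing a cutoff $\chi$ with $\chi\equiv 1$ on a neighbourhood of $\omega_0$ and $\operatorname{supp}\chi\Subset\omega_1$, one bounds the integral over $\omega_0$ by an integral over $\omega_1$ at the cost of the local term $\int_0^T\int_{\omega_1}s^3\eta^3w^2$ on the right-hand side. Undoing the substitution $z=e^{-s\Phi}w$ turns this into the observation term $\int_0^T\int_{\omega_1}s^3\eta^3z^2e^{2s\Phi}\,\dq$ appearing in \eqref{ineqcarl2}, and converts the left-hand side into $\int_Q(s^3\eta^3z^2+s\eta z_x^2)e^{2s\Phi}\,\dq$. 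Finally I would absorb all remaining lower-order terms (arising from the substitution and from the cutoff commutators) into the dominant terms for $s\geq s_2$ large enough.

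The main obstacle I expect is the bookkeeping of the cross term: one must verify that, once the manifestly good $s^3$ and $s$ terms are isolated, every remaining integration-by-parts contribution is either of lower order in $s$ (hence absorbable) or supported in $\omega_1$ (hence moved to the right-hand side), with the lateral boundary integrals at $b_1,b_2$ carrying a favourable sign. Since $k$ is uniformly positive in this statement, no degenerate weight has to be tracked, so this reduces to the classical computation, and one may equally well invoke \cite{FursikovImanuvilov} directly.
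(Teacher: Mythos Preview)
Your sketch is a faithful outline of the classical Fursikov--Imanuvilov argument and there is no mathematical gap in it. However, the paper does not supply its own proof of this proposition: it is stated with the citation \cite{FursikovImanuvilov} and used as a black box, exactly as you suggest one may do in your final sentence. So rather than differing from the paper's proof, your proposal simply expands what the paper takes for granted; the conjugation $w=e^{s\Phi}z$, the $L_s^++L_s^-$ decomposition, the cross-term computation exploiting $\sigma_x\neq 0$ off $\omega_0$, and the absorption of the potential term for large $s$ are precisely the ingredients of the cited reference.
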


\begin{remark}\label{Rmkchange}
By a change of variables $t\mapsto T-t$ in systems \eqref{defz1} and \eqref{carl4}, the inequalities \eqref{ineqcarl1} and \eqref{ineqcarl2} remain true.
\end{remark}

\subsection{An intermediate Carleman estimate}

Now, we state and prove an important result of this paper, which is the intermediate Carleman estimate satisfied by the solutions of systems \eqref{varrho}-\eqref{rho1}. This inequality is obtained by using the Carleman estimates \eqref{ineqcarl1} and \eqref{ineqcarl2}, the Hardy-Poincaré inequality \eqref{hardy} and the Caccioppoli's inequality \eqref{caccio}.

\begin{theorem}\label{thm1}$ $
	
	Assume that the hypotheses \eqref{k} on $k$ are satisfied. Then, there exist two positive constants $C$ and $s_4$, such that every solution $\varrho$ and $\rho$ respectively of \eqref{varrho} and \eqref{rho1} satisfy, for all $s\geq s_4$, the following inequality:
	
	\begin{eqnarray}\label{ineqcarlprinc}
	\int_{Q}\left(s^3\Theta^3\frac{x^2}{k(x)}\varrho^2+s\Theta k(x)\varrho_x^2\right)e^{2s\varphi}\,\dq+\int_{Q}\left(s^3\Theta^3\frac{x^2}{k(x)}\rho^2+s\Theta k(x)\rho_x^2\right)e^{2s\varphi}\,\dq\nonumber\\ \leq C\int_{0}^{T}\int_{\omega_1}s^3\Theta^3(\varrho^2+\rho^2)e^{2s\Phi}\,\dq.
	\end{eqnarray}
\end{theorem}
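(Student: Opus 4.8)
The plan is to reduce \eqref{ineqcarlprinc} to two applications of the degenerate Carleman estimate \eqref{ineqcarl1}, one for each component, and then to remove the resulting coupling terms and boundary terms by means of the Hardy--Poincaré inequality \eqref{hardy}, the non-degenerate Carleman estimate \eqref{ineqcarl2}, and Caccioppoli's inequality \eqref{caccio}. For a function $z$ let me abbreviate the weighted functional appearing on the left of \eqref{ineqcarlprinc} by
\begin{equation*}
\mathcal{I}(z)=\int_{Q}\left(s^3\Theta^3\frac{x^2}{k(x)}z^2+s\Theta k(x)z_x^2\right)e^{2s\varphi}\,\dq .
\end{equation*}
First I would apply Proposition \ref{prp2} to $\rho$, whose source is $\varrho\chi_{\O_d}$, and --- after the time reversal $t\mapsto T-t$ justified in Remark \ref{Rmkchange} (under which $\Theta$ and $\varphi$ are invariant) --- to $\varrho$, whose source is $-\frac{1}{\mu}\rho\chi_{\O}$. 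This yields, for $s\geq s_1$,
\begin{align*}
\mathcal{I}(\rho)&\leq C\int_{\O_d^T}\varrho^2 e^{2s\varphi}\,\dq+Csk(1)\int_0^T\Theta\rho_x^2(t,1)e^{2s\varphi(t,1)}\,dt,\\
\mathcal{I}(\varrho)&\leq \frac{C}{\mu^2}\int_{\O_T}\rho^2 e^{2s\varphi}\,\dq+Csk(1)\int_0^T\Theta\varrho_x^2(t,1)e^{2s\varphi(t,1)}\,dt .
\end{align*}
Adding the two inequalities reduces the problem to controlling (i) the distributed coupling terms $\int_{\O_d^T}\varrho^2 e^{2s\varphi}$ and $\int_{\O_T}\rho^2 e^{2s\varphi}$, and (ii) the boundary traces at $x=1$.

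For (i) I would argue exactly as in the proof of Proposition \ref{prp2}: applying the Hardy--Poincaré inequality \eqref{hardy} to $e^{s\varphi}\rho$ and to $e^{s\varphi}\varrho$, using that $x\mapsto x^{2}/k(x)$ is non-decreasing and the definition \eqref{functcarl} of $\varphi$, one bounds $\int_Q\rho^2 e^{2s\varphi}$ and $\int_Q\varrho^2 e^{2s\varphi}$ by integrals of the form $\int_Q\big(s^2\Theta^2\frac{x^2}{k(x)}z^2+k(x)z_x^2\big)e^{2s\varphi}$. These carry strictly lower powers of $s\Theta$ than the corresponding terms in $\mathcal{I}(\rho)$ and $\mathcal{I}(\varrho)$, and since $\Theta$ is bounded below by a positive constant (cf. \eqref{theta}), they are absorbed into $\tfrac12\big(\mathcal{I}(\rho)+\mathcal{I}(\varrho)\big)$ for $s$ large. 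It is worth stressing that this is the step that tolerates $\O$ and $\O_d$ reaching the degeneracy point $x=0$: the gain comes from the powers of $s\Theta$, not from any lower bound on $x^{2}/k(x)$.

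For (ii), since $k>0$ on $(0,1]$ the operator is uniformly parabolic away from $x=0$, so I would fix a cutoff $\xi\in\mathcal{C}^\infty([0,1])$ with $\xi\equiv 0$ near $x=0$, $\xi\equiv 1$ near $x=1$, and with transition region $\{\xi_x\neq 0\}\subset\omega_1$, and apply the non-degenerate estimate \eqref{ineqcarl2} to $w=\xi\rho$ and $w=\xi\varrho$ on a strip $(0,T)\times(b_1,1)$ with $0<b_1\in\omega_1$ chosen so that $w$ vanishes at both endpoints. The equation for $w$ carries the commutator source $-2k(x)\xi_x z_x-(k(x)\xi_x)_x z$, supported in $\{\xi_x\neq 0\}\subset\omega_1$; the estimate \eqref{ineqcarl2} then controls $\rho,\rho_x,\varrho,\varrho_x$ on a neighbourhood of $x=1$, which is what I would use to dominate the boundary traces $\rho_x^2(t,1)$ and $\varrho_x^2(t,1)$, while its right-hand side involves only $\omega_1$-supported quantities. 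The gradient terms coming from the commutator are converted into $\omega_1$-supported, gradient-free terms by Caccioppoli's inequality \eqref{caccio}; after passing between the weights $e^{2s\varphi}$ and $e^{2s\Phi}$ via \eqref{ineqphi} and using $\eta\leq C\Theta$ on $\omega_1$, this produces precisely the right-hand side $C\int_0^T\int_{\omega_1}s^3\Theta^3(\varrho^2+\rho^2)e^{2s\Phi}\,\dq$ of \eqref{ineqcarlprinc}.

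The main obstacle is step (ii): gluing the degenerate estimate (which handles $x=0$ but leaves an uncontrolled boundary term at $x=1$) to the non-degenerate estimate (which handles $x=1$ but requires a cutoff generating commutator terms) so that the $x=1$ traces are genuinely absorbed and the observation is reduced to the single set $\omega_1$. This demands careful bookkeeping of the powers of $s$ and $\Theta$ in every absorption, the weight compatibility \eqref{ineqphi}, and a correct placement of $b_1$ and of the support of $\xi$ inside $\omega_1$ so that Caccioppoli's inequality \eqref{caccio} applies on the transition region. By contrast, the absorption in step (i) is routine once the Hardy--Poincaré computation of Proposition \ref{prp2} is reused.
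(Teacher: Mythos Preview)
Your proposal uses the same ingredients as the paper (degenerate Carleman \eqref{ineqcarl1}, non-degenerate Carleman \eqref{ineqcarl2}, Hardy--Poincar\'e \eqref{hardy}, Caccioppoli \eqref{caccio}) and your treatment of the coupling terms in step~(i) is essentially what the paper does. The structural difference is in the order of operations for step~(ii), and that is where a genuine gap appears.

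The paper does \emph{not} apply the degenerate Carleman estimate to $\rho$ and $\varrho$ themselves. It first introduces a cutoff $\xi$ with $\xi\equiv 1$ on $[0,\alpha]$ and $\xi\equiv 0$ on $[\beta,1]$ (transition in $\omega'\Subset\omega_1$), and applies \eqref{ineqcarl1} to $\widetilde\rho=\xi\rho$, $\widetilde\varrho=\xi\varrho$. Because $\xi\equiv 0$ near $x=1$, one has $\widetilde\rho_x(t,1)=\widetilde\varrho_x(t,1)=0$, so the boundary term in \eqref{ineqcarl1} \emph{vanishes identically}. The complementary pieces $\overline\rho=(1-\xi)\rho$, $\overline\varrho=(1-\xi)\varrho$ are supported in $[\alpha,1]$, where the equation is uniformly parabolic, and \eqref{ineqcarl2} is applied to them; the commutator terms, supported in $\omega'$, are handled by Caccioppoli. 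Summing via $\rho=\widetilde\rho+\overline\rho$ gives \eqref{ineqcarlprinc}.

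You instead apply \eqref{ineqcarl1} to the uncut $\rho,\varrho$, producing the trace terms $sk(1)\int_0^T\Theta\,\rho_x^2(t,1)e^{2s\varphi(t,1)}dt$ (and similarly for $\varrho$), and then propose to bound these traces by applying \eqref{ineqcarl2} to $\xi\rho,\xi\varrho$ with the \emph{opposite} cutoff ($\xi\equiv 1$ near $x=1$). The problem is that \eqref{ineqcarl2}, as stated, gives only a \emph{distributed} bound $\int_{Q_b}s\eta\,w_x^2\,e^{2s\Phi}\,dx\,dt$; it says nothing about the pointwise trace $w_x(t,1)$. An $L^2$-in-$x$ bound on $\rho_x$ near $x=1$ does not dominate $\int_0^T\rho_x^2(t,1)\,dt$ without a further trace argument, and obtaining one with the correct $s,\Theta$ weights is precisely the delicate point. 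Your sentence ``which is what I would use to dominate the boundary traces'' is therefore the missing step. To salvage your route you would need a sharper version of the non-degenerate Carleman estimate that carries the boundary normal derivative $s\int_0^T\eta(t,1)\,w_x^2(t,1)e^{2s\Phi(t,1)}dt$ on its left-hand side (such versions exist in Fursikov--Imanuvilov when $\sigma_x(1)<0$, which holds here), and then check the weight comparison $e^{2s\varphi(t,1)}\leq e^{2s\Phi(t,1)}$ via \eqref{ineqphi}. But with only the tools quoted in the paper, the cleaner fix is to invert the order: cut off \emph{before} applying \eqref{ineqcarl1}, so the boundary term is never produced.
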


\begin{proof}
Let us choose an arbitrary open subset $\omega^\prime:=(\alpha,\beta)$ such that $\omega^\prime\Subset\omega_1$. Let us introduce the smooth cut-off function $\xi:\R\to\R$ defined as follows:
	\begin{equation}\label{cutoff}
	\left\{
	\begin{array}{llll}
	\dis 0\leq \xi\leq 1, \quad x\in \R,\\
	\dis \xi(x)=1, \quad x\in [0,\alpha],\\
	\dis \xi(x)=0, \quad x\in [\beta,1].
	\end{array}
	\right.
	\end{equation}	
	Let $\varrho$ and $\rho$ be respectively solutions of \eqref{varrho} and \eqref{rho1}. We set $\widetilde{\varrho}=\xi \varrho$ and $\widetilde{\rho}=\xi \rho$. Then, $\widetilde{\varrho}$ and $\widetilde{\rho}$ are respectively solutions to
\begin{equation}\label{varrhot}
\left\{
\begin{array}{rllll}
\dis \widetilde{\varrho}_t-\left(k(x)\widetilde{\varrho}_x\right)_{x}+a_0\widetilde{\varrho} &=&\dis -\frac{1}{\mu}\widetilde{\rho}\chi_{\O}-\left(k(x)\xi_x\ \varrho\right)_{x}-\xi_x\  k(x)\varrho_x& \mbox{in}& Q,\\
\dis \widetilde{\varrho}(t,0)=\widetilde{\varrho}(t,1)&=&0& \mbox{on}& (0,T), \\
\dis \widetilde{\varrho}(0,\cdot)&=&\dis \frac{1}{\sqrt{\gamma}}\zeta(0,\cdot)&\mbox{in}&\Omega,
\end{array}
\right.
\end{equation}
and 
\begin{equation}\label{rho1t}
\left\{
\begin{array}{rllll}
\dis -\widetilde{\rho}_t-\left(k(x)\widetilde{\rho}_x\right)_{x}+a_0\widetilde{\rho} &=&\dis \widetilde{\varrho}\chi_{\O_d}-\left(k(x)\xi_x\ \rho\right)_{x}-\xi_x\  k(x)\rho_x& \mbox{in}& Q,\\
\dis \widetilde{\rho}(t,0)=\widetilde{\rho}(t,1)&=&0& \mbox{on}& (0,T), \\
\dis \widetilde{\rho}(T,\cdot)&=&\widetilde{\rho}_T&\mbox{in}&\Omega.
\end{array}
\right.
\end{equation}
Applying Proposition \ref{prp2} for $\widetilde{\varrho}$ solution to \eqref{varrhot} with $f=\dis -\frac{1}{\mu}\widetilde{\rho}\chi_{\O}-\left(k(x)\xi_x\ \varrho\right)_{x}-\xi_x\  k(x)\varrho_x$, using Young's inequality and the fact that  $\widetilde{\rho}_x(t,1)=0$, we obtain
	\begin{eqnarray}\label{ineqcarl3rho}
		&&\int_{Q}\left(s^3\Theta^3\frac{x^2}{k(x)}\widetilde{\varrho}^2+s\Theta k(x)\widetilde{\varrho}_x^2\right)e^{2s\varphi}\,\dq\nonumber\\
		&\leq& \dis C\int_{Q}\left|\dis -\frac{1}{\mu}\widetilde{\rho}\chi_{\O}-\left(k(x)\xi_x\ \varrho\right)_{x}-\xi_x\  k(x)\varrho_x\right|^2e^{2s\varphi}\,\dq\\
		&\leq&C_2 \dis \int_{Q}\left[\widetilde{\rho}^2+((k(x)\xi_x{\varrho})_x+k(x)\xi_x{\varrho}_x)^2\right] e^{2s\varphi}\,\dq.\nonumber
	\end{eqnarray}
	Moreover, using again Young's inequality and the definition of the function $\xi$, we have
	
	\begin{eqnarray}\label{ine}
	\int_{Q} ((k(x)\xi_x{\varrho})_x+k(x)\xi_x{\varrho}_x)^2 e^{2s\varphi}\,\dq&=& \int_{Q} ((k(x)\xi_x)_x{\varrho}+2k(x)\xi_x{\varrho}_x)^2 e^{2s\varphi}\,\dq\nonumber\\
	&\leq& \int_{Q}\left[2((k(x)\xi_x)_x)^2{\varrho}^2+8(k(x)\xi_x)^2{\varrho}_x^2\right] e^{2s\varphi}\,\dq\nonumber\\
	&\leq& C_3\int_0^T\int_{\omega^\prime}(\varrho^2+\varrho_{x}^2)\ e^{2s\varphi}\,\dq.
	\end{eqnarray}
	On the other hand, we note that $\dis \frac{x^2}{k(x)}$ is non-decreasing. Applying Hardy-Poincar\'e inequality \eqref{hardy} with the function $e^{s\varphi}\widetilde{\rho}$ and using the definition of $\varphi$, we get:
	\begin{eqnarray*}
		\int_{Q}\widetilde{\rho}^2e^{2s\varphi}\, \dq
		&\leq& \frac{1}{k(1)}\int_{Q} \frac{k(x)}{x^2}\widetilde{\rho}^2e^{2s\varphi}\, \dq\\
		&\leq& \frac{\overline{C}}{k(1)}\int_{Q}k(x)|(\widetilde{\rho}\ e^{s\varphi})_x|^2\, \dq\\
		&\leq& C_4\int_{Q}k(x)\widetilde{\rho}_x^2e^{2s\varphi}\, \dq+C_5\int_{Q}s^2\Theta^2\frac{x^2}{k(x)}\widetilde{\rho}^2e^{2s\varphi}\ \dq.
	\end{eqnarray*}
	Using \eqref{theta}, we get
	
	\begin{eqnarray}\label{ine1}
	\int_{Q}\widetilde{\rho}^2e^{2s\varphi}\, \dq\leq  C_6\int_{Q}\Theta k(x)\widetilde{\rho}_x^2e^{2s\varphi}\, \dq+C_7\int_{Q}s^2\Theta^3\frac{x^2}{k(x)}\widetilde{\rho}^2e^{2s\varphi}\ \dq.
	\end{eqnarray}
	Combining \eqref{ineqcarl3rho}, \eqref{ine}, and \eqref{ine1}, we obtain
	\begin{eqnarray}\label{i1}
	\int_{Q}\left(s^3\Theta^3\frac{x^2}{k(x)}\widetilde{\varrho}^2+s\Theta k(x)\widetilde{\varrho}_x^2\right)e^{2s\varphi}\,\dq
	\leq  C_3\int_0^T\int_{\omega^\prime}(\varrho^2+\varrho_{x}^2)e^{2s\varphi}\,\dq \nonumber\\
	+C_6\int_{Q}\Theta k(x)\widetilde{\rho}_x^2e^{2s\varphi}\, \dq+C_7\int_{Q}s^2\Theta^3\frac{x^2}{k(x)}\widetilde{\rho}^2e^{2s\varphi}\ \dq.
	\end{eqnarray}
	Applying the same way with $\widetilde{\rho}$ solution of \eqref{rho1t}, we obtain
	
	\begin{eqnarray}\label{i2}
	\int_{Q}\left(s^3\Theta^3\frac{x^2}{k(x)}\widetilde{\rho}^2+s\Theta k(x)\widetilde{\rho}_x^2\right)e^{2s\varphi}\,\dq
	\leq  C_8\int_0^T\int_{\omega^\prime}(\rho^2+\rho_{x}^2)e^{2s\varphi}\,\dq\nonumber\\
	+C_9\int_{Q}\Theta k(x)\widetilde{\varrho}_x^2e^{2s\varphi}\, \dq+C_{10}\int_{Q}s^2\Theta^3\frac{x^2}{k(x)}\widetilde{\varrho}^2e^{2s\varphi}\ \dq .
	\end{eqnarray}
	Combining \eqref{i1} and \eqref{i2}, we obtain
	
	\begin{eqnarray*}
		&&\dis \int_{Q}\left(s^3\Theta^3\frac{x^2}{k(x)}\widetilde{\varrho}^2+s\Theta k(x)\widetilde{\varrho}_x^2\right)e^{2s\varphi}\,\dq+\int_{Q}\left(s^3\Theta^3\frac{x^2}{k(x)}\widetilde{\rho}^2+s\Theta k(x)\widetilde{\rho}_x^2\right)e^{2s\varphi}\,\dq\nonumber\\
		&&\leq\dis C_{11}\left(\int_{Q}\Theta k(x)(\widetilde{\varrho}_x^2+\widetilde{\rho}_x^2)e^{2s\varphi}\, \dq+\int_{Q}s^2\Theta^3\frac{x^2}{k(x)}(\widetilde{\varrho}^2+\widetilde{\rho}^2)e^{2s\varphi}\ \dq\right)\\
		 &&\dis + C_{12}\int_0^T\int_{\omega^\prime}(\varrho^2+\rho^2+\varrho^2_x+\rho^2_x)e^{2s\varphi}\,\dq\nonumber.
	\end{eqnarray*}
	Taking $s\geq s_3=\max(s_1,2C_{11})$, we obtain
	
	\begin{eqnarray*}
		\dis \int_{Q}\left(s^3\Theta^3\frac{x^2}{k(x)}\widetilde{\varrho}^2+s\Theta k(x)\widetilde{\varrho}_x^2\right)e^{2s\varphi}\,\dq+\int_{Q}\left(s^3\Theta^3\frac{x^2}{k(x)}\widetilde{\rho}^2+s\Theta k(x)\widetilde{\rho}_x^2\right)e^{2s\varphi}\,\dq\nonumber\\ \leq C_{12}\int_0^T\int_{\omega^\prime}(\varrho^2+\rho^2+\varrho^2_x+\rho^2_x)e^{2s\varphi}\,\dq\nonumber.
	\end{eqnarray*}
	Thanks to Caccioppoli's inequality \eqref{caccio}, this latter inequality becomes
	
	\begin{eqnarray}\label{i5}
		\dis \int_{Q}\left(s^3\Theta^3\frac{x^2}{k(x)}\widetilde{\varrho}^2+s\Theta k(x)\widetilde{\varrho}_x^2\right)e^{2s\varphi}\,\dq+\int_{Q}\left(s^3\Theta^3\frac{x^2}{k(x)}\widetilde{\rho}^2+s\Theta k(x)\widetilde{\rho}_x^2\right)e^{2s\varphi}\,\dq\\ \leq C_{13}\int_0^T\int_{\omega_1}s^2\Theta^2(\varrho^2+\rho^2)e^{2s\varphi}\,\dq\nonumber.
	\end{eqnarray}

	Now let $\overline{\varrho}= \vartheta\varrho$ and $\overline{\rho}=\vartheta\rho$ with $\vartheta=1-\xi$. Then, the support of $\overline{\varrho}$ and $\overline{\rho}$ is contained in $[0,T]\times[\alpha,1]$ and are respectively solutions to

\begin{equation}\label{varrhob}
\left\{
\begin{array}{rllll}
\dis \overline{\varrho}_t-\left(k(x)\overline{\varrho}_x\right)_{x}+a_0\overline{\varrho} &=&\dis -\frac{1}{\mu
	}\overline{\rho}\chi_{\O}-\left(k(x)\vartheta_x\ \varrho\right)_{x}-\vartheta_x\  k(x)\varrho_x& \mbox{in}& Q_\alpha,\\
\dis \overline{\varrho}(t,0)=\overline{\varrho}(t,1)&=&0& \mbox{on}& (0,T), \\
\dis \overline{\varrho}(0,\cdot)&=&\dis \frac{1}{\sqrt{\gamma}}\zeta(0,\cdot)&\mbox{in}&\Omega
\end{array}
\right.
\end{equation}
and 
\begin{equation}\label{rho1b}
\left\{
\begin{array}{rllll}
\dis -\overline{\rho}_t-\left(k(x)\overline{\rho}_x\right)_{x}+a_0\overline{\rho} &=&\dis \overline{\varrho}\chi_{\O_d}-\left(k(x)\vartheta_x\ \rho\right)_{x}-\vartheta_x\  k(x)\rho_x& \mbox{in}& Q_\alpha,\\
\dis \overline{\rho}(t,0)=\overline{\rho}(t,1)&=&0& \mbox{on}& (0,T), \\
\dis \overline{\rho}(T,\cdot)&=&\overline{\rho}_T&\mbox{in}&\Omega,
\end{array}
\right.
\end{equation}
where, $Q_{\alpha}=(0,T)\times(\alpha,1)$. Since on $Q_{\alpha}$ all the above systems are non degenerate, applying Proposition \ref{propcarl2} on $\overline{\varrho}$ solution of \eqref{varrhob} with $b_1=\alpha$, $b_2=1$ and $\dis f=\dis -\frac{1}{\mu}\overline{\rho}\chi_{\O}-\left(k(x)\vartheta_x\ \varrho\right)_{x}-\vartheta_x\  k(x)\varrho_x$, we get
	
	\begin{eqnarray*}
		\int_{Q}\left(s^3\eta^3\overline{\varrho}^2+s\eta \overline{\varrho}_x^2\right)e^{2s\Phi}\,\dq
		\leq C_{14}\int_{Q}\left|\dis -\frac{1}{\mu}\overline{\rho}\chi_{\O}-\left(k(x)\vartheta_x\ \varrho\right)_{x}-\vartheta_x\  k(x)\varrho_x \right|^2e^{2s\Phi}\,\dq\\
		+C_{14}\int_0^T\int_{\omega_1}s^3\eta^3{\varrho}^2e^{2s\Phi}\,\dq.
	\end{eqnarray*}
	Using Young's inequality, we obtain
	
	\begin{eqnarray}\label{i6}
	\int_{Q}\left(s^3\eta^3\overline{\varrho}^2+s\eta \overline{\varrho}_x^2\right)e^{2s\Phi}\,\dq
	\leq C_{15}\int_{Q}[\overline{\rho}^2 +((k(x)\vartheta_x{\varrho})_x+k(x)\vartheta_x{\varrho}_x)^2 ]e^{2s\Phi}\,\dq\nonumber\\
	+C_{14}\int_0^T\int_{\omega_1}s^3\eta^3{\varrho}^2e^{2s\Phi}\,\dq.
	\end{eqnarray}
	Moreover using again Young's inequality and the definition of the function $\vartheta$, we have
	
	\begin{eqnarray}\label{i7}
	\int_{Q} ((k(x)\vartheta_x{\varrho})_x+k(x)\vartheta_x{\varrho}_x)^2 e^{2s\Phi}\,\dq&=& \int_{Q} ((k(x)\vartheta_x)_x{\varrho}+2k(x)\vartheta_x{\varrho}_x)^2 e^{2s\Phi}\,\dq\nonumber\\
	&\leq& \int_{Q}\left[2((k(x)\vartheta_x)_x)^2{\varrho}^2+8(k(x)\vartheta_x)^2{\varrho}_x^2\right] e^{2s\Phi}\,\dq\nonumber\\
	&\leq& C_{16}\int_0^T\int_{\omega^\prime}(\varrho^2+\varrho_{x}^2)e^{2s\Phi}\,\dq,
	\end{eqnarray}
	On the other hand, since $\dis \frac{x^2}{k(x)}$ is non-decreasing, and thanks to Hardy-Poincar\'e inequality \eqref{hardy}, we get
	\begin{eqnarray*}
		\int_{Q}\overline{\rho}^2e^{2s\Phi}\, \dq
		&\leq& \frac{1}{k(1)}\int_{Q} \frac{k(x)}{x^2}(\overline{\rho}e^{s\Phi})^2\, \dq\\
		&\leq& \frac{\overline{C}}{k(1)}\int_{Q}k(x)|(\overline{\rho}e^{s\Phi})_x|^2\, \dq\\
		&\leq& C_{17}\int_{Q}k(x)\overline{\rho}_x^2e^{2s\Phi}\, \dq+C_{18}\int_{Qk(x)}s^2\eta^2\overline{\rho}^2e^{2s\Phi}\ \dq.
	\end{eqnarray*}
	Using \eqref{theta}, the fact that $k\in \mathcal{C}([0;1])$ and $\eta^{-1}\in L^\infty(Q)$, we get

	\begin{eqnarray}\label{i8}
	\int_{Q}\overline{\rho}^2e^{2s\Phi}\, \dq\leq  C_{19}\int_{Q}\eta \overline{\rho}_x^2e^{2s\Phi}\, \dq+C_{20}\int_{Q}s^2\eta^3\overline{\rho}^2e^{2s\Phi}\ \dq.
	\end{eqnarray}
	Combining \eqref{i6}, \eqref{i7} and \eqref{i8}, we obtain
	
	\begin{eqnarray*}
	\int_{Q}\left(s^3\eta^3\overline{\varrho}^2+s\eta \overline{\varrho}_x^2\right)e^{2s\Phi}\,\dq
	\leq  C_{21}\int_0^T\int_{\omega^\prime}(\varrho^2+\varrho_{x}^2)e^{2s\Phi}\,\dq+C_{22}\int_0^T\int_{\omega_1}s^3\Theta^3\varrho^2 e^{2s\Phi}\,\dq\nonumber\\
	+C_{23}\int_{Q}\eta \overline{\rho}_x^2e^{2s\Phi}\, \dq+C_{24}\int_{Q}s^2\eta^3e^{2s\Phi}\ \dq .
	\end{eqnarray*}
	Applying the Caccioppoli's inequality \eqref{caccio} to the latter inequality, we are lead to
	
	\begin{eqnarray}\label{i10}
	\int_{Q}\left(s^3\eta^3\overline{\varrho}^2+s\eta \overline{\varrho}_x^2\right)e^{2s\Phi}\,\dq
	\leq  C_{25}\int_0^T\int_{\omega_1}s^3\Theta^3(\varrho^2+\rho^2)e^{2s\Phi}\,\dq\nonumber\\
	+C_{23}\int_{Q}\eta\overline{\rho}_x^2e^{2s\Phi}\, \dq+C_{24}\int_{Q}s^2\eta^3\overline{\rho}^2e^{2s\Phi}\ \dq .
\end{eqnarray}
Applying the same way to $\overline{\rho}$ solution of \eqref{rho1b} with source term $f=\dis \overline{\varrho}\chi_{\O_d}-\left(k(x)\vartheta_x\ \rho\right)_{x}-\vartheta_x\  k(x)\rho_x$, we obtain
	
	\begin{eqnarray}\label{i11}
	\int_{Q}\left(s^3\eta^3\overline{\rho}^2+s\eta \overline{\rho}_x^2\right)e^{2s\Phi}\,\dq
	\leq  C_{26}\int_0^T\int_{\omega_1}s^3\Theta^3(\varrho^2+\rho^2)e^{2s\Phi}\,\dq\nonumber\\
	+C_{27}\int_{Q}\eta\overline{\varrho}_x^2e^{2s\Phi}\, \dq+C_{28}\int_{Q}s^2\eta^3\overline{\varrho}^2e^{2s\Phi}\ \dq .
	\end{eqnarray}
	Combining \eqref{i10} and \eqref{i11}, we obtain
	
	\begin{eqnarray}\label{i14}
	&&\dis \int_{Q}\left(s^3\eta^3\overline{\varrho}^2+s\eta \overline{\varrho}_x^2\right)e^{2s\Phi}\,\dq+\int_{Q}\left(s^3\eta^3\overline{\rho}^2+s\eta \overline{\rho}_x^2\right)e^{2s\Phi}\,\dq\nonumber\\
	&&\leq \dis C_{29}\left(\int_{Q}\eta(\overline{\varrho}_x^2+\overline{\rho}_x^2)e^{2s\Phi}\, \dq+\int_{Q}s^2\eta^3(\overline{\varrho}^2+\overline{\rho}^2)e^{2s\Phi}\ \dq\right) \\
	&&\dis +C_{30}\int_0^T\int_{\omega_1}s^3\Theta^3(\varrho^2+\rho^2)e^{2s\Phi}\,\dq\nonumber.
	\end{eqnarray}
Taking $s\geq s_4=\max(s_2,2C_{29})$ and using the fact that $\dis e^{3r\sigma(x)}\geq 1$ and $\dis e^{r\sigma(x)}\geq 1$, we obtain	
	
\begin{eqnarray}\label{i16}
&&\dis \int_{Q}\left(s^3\Theta^3\overline{\varrho}^2+s\Theta \overline{\varrho}_x^2\right)e^{2s\Phi}\,\dq+\int_{Q}\left(s^3\Theta^3\overline{\rho}^2+s\Theta \overline{\rho}_x^2\right)e^{2s\Phi}\,\dq\\
&&\dis \leq C_{30}\int_0^T\int_{\omega_1}s^3\Theta^3(\varrho^2+\rho^2)e^{2s\Phi}\,\dq\nonumber.
\end{eqnarray}	
Thanks to \eqref{ineqphi}, the fact that $k\in \mathcal{C}([0,1])$ and the function $\dis \frac{x^2}{k(x)}$ is non-decreasing, one can prove the existence of a constant $C>0$  such that for all $(t,x)\in (0,T)\times [\alpha,1]$, we have
\begin{equation}\label{est}
\begin{array}{llll}
\dis e^{2s\varphi}\leq e^{2s\Phi},\ \ \ \, \frac{x^2}{k(x)}e^{2s\varphi}\leq Ce^{2s\Phi},\ \ \ \, k(x)e^{2s\varphi}\leq Ce^{2s\Phi}.
\end{array}
\end{equation}
Using \eqref{i16} and \eqref{est}, it follows that
	
\begin{eqnarray}\label{i17}
&&\dis \int_{Q}\left(s^3\Theta^3\frac{x^2}{k(x)}\overline{\varrho}^2+s\Theta k(x) \overline{\varrho}_x^2\right)e^{2s\varphi}\,\dq+\int_{Q}\left(s^3\Theta^3\frac{x^2}{k(x)}\overline{\rho}^2+s\Theta k(x) \overline{\rho}_x^2\right)e^{2s\varphi}\,\dq\nonumber\\
&&\dis \leq C_{31}\int_0^T\int_{\omega_1}s^3\Theta^3(\varrho^2+\rho^2)e^{2s\Phi}\,\dq.
\end{eqnarray}		
Combining \eqref{i5} and \eqref{i17}, and using the fact that $e^{2s\varphi}\leq e^{2s\Phi}$, we obtain	

\begin{eqnarray}\label{i18}
&&\dis \int_{Q}\left(s^3\Theta^3\frac{x^2}{k(x)}(\widetilde{\varrho}^2+\overline{\varrho}^2)+s\Theta k(x) (\widetilde{\varrho}_x^2+\overline{\varrho}_x^2)\right)e^{2s\varphi}\,\dq\nonumber\\
&&\dis +\int_{Q}\left(s^3\Theta^3\frac{x^2}{k(x)}(\widetilde{\rho}^2+\overline{\rho}^2)+s\Theta k(x) (\widetilde{\rho}_x^2+\overline{\rho}_x^2)\right)e^{2s\varphi}\,\dq\\
&&\dis \leq C_{32}\int_0^T\int_{\omega_1}s^3\Theta^3(\varrho^2+\rho^2)e^{2s\Phi}\,\dq\nonumber.
\end{eqnarray}		
Using the fact that $\varrho=\widetilde{\varrho}+\overline{\varrho}$ and $\rho=\widetilde{\rho}+\overline{\rho}$, then we have 
\begin{equation}\label{pat}
	|\varrho|^2\leq 2\left(|\widetilde{\varrho}|^2+|\overline{\varrho}|^2\right),\ |\rho|^2\leq 2\left(|\widetilde{\rho}|^2+|\overline{\rho}|^2\right),\ |\varrho_x|^2\leq 2\left(|\widetilde{\varrho}_x|^2+|\overline{\varrho}_x|^2\right),\ |\rho_x|^2\leq 2\left(|\widetilde{\rho}_x|^2+|\overline{\rho}_x|^2\right). 
\end{equation}
Combining \eqref{pat} and \eqref{i18}, we obtain
	
\begin{eqnarray*}
&&\dis \int_{Q}\left(s^3\Theta^3\frac{x^2}{k(x)}\varrho^2+s\Theta k(x) \varrho^2_x\right)e^{2s\varphi}\,\dq
+\int_{Q}\left(s^3\Theta^3\frac{x^2}{k(x)}\rho^2+s\Theta k(x) \rho^2_x\right)e^{2s\varphi}\,\dq\nonumber\\
&&\dis \leq C_{33}\int_0^T\int_{\omega_1}s^3\Theta^3(\varrho^2+\rho^2)e^{2s\Phi}\,\dq.
\end{eqnarray*}
This complete the proof.
\end{proof}

\subsection{An observability inequality result}

This part is devoted to the observability inequality of systems \eqref{varrho}-\eqref{rho1}. This inequality is obtained by using the intermediate Carleman estimate \eqref{ineqcarlprinc}.

\begin{prop} \label{prop5} $ $
	
	 Under the assumptions of Theorem \ref{thm1}, there exist two positive constants $C$ and $s_4$, such that every solution $\varrho$ and $\rho$ of \eqref{varrho} and \eqref{rho1}, respectively, satisfy, for all $s\geq s_4$, the following inequality:
	\begin{eqnarray}\label{obser}
	\int_{Q}\left(s^3\Theta^3\frac{x^2}{k(x)}\varrho^2+s\Theta k(x)\varrho_x^2\right)e^{2s\varphi}\,\dq+\int_{Q}\left(s^3\Theta^3\frac{x^2}{k(x)}\rho^2+s\Theta k(x)\rho_x^2\right)e^{2s\varphi}\,\dq\nonumber\\
	\leq Cs^7\int_{\omega_T}|\rho|^2\,\dq.
	\end{eqnarray}
\end{prop}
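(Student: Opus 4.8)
The plan is to start from the intermediate Carleman estimate \eqref{ineqcarlprinc}, whose left-hand side is exactly the quantity to be bounded in \eqref{obser}. Thus the whole problem reduces to dominating the local right-hand side
\[
C\int_{0}^{T}\int_{\omega_1}s^3\Theta^3\bigl(\varrho^2+\rho^2\bigr)e^{2s\Phi}\,\dq
\]
by $Cs^7\int_{\omega_T}|\rho|^2\,\dq$. Since $\omega_1\subset\omega$, the $\rho^2$ contribution is harmless once the weight is controlled; the genuine difficulty---and the step I expect to be the main obstacle---is to remove the local term in $\varrho$, because the observation in \eqref{obser} only involves $\rho$ on $\omega_T$. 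This is the usual cascade-coupling obstruction, and it must be overcome by exploiting the equation \eqref{rho1} satisfied by $\rho$.

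To eliminate the $\varrho$-term I would fix an open set $\omega''$ with $\omega_1\Subset\omega''\Subset\O_d\cap\omega$ and a cut-off $\theta\in\mathcal{C}^\infty_c(\omega'')$, $0\le\theta\le1$, $\theta\equiv1$ on $\omega_1$. Since $\operatorname{supp}\theta\subset\O_d$, on that support \eqref{rho1} reads $\varrho=-\rho_t-(k(x)\rho_x)_x+a_0\rho$, so that
\[
\int_{0}^{T}\!\!\int_{\Omega}s^3\Theta^3 e^{2s\Phi}\theta^2\varrho^2\,\dq
=\int_{0}^{T}\!\!\int_{\Omega}\bigl(-\rho_t-(k(x)\rho_x)_x+a_0\rho\bigr)s^3\Theta^3 e^{2s\Phi}\theta^2\varrho\,\dq .
\]
I would then integrate by parts: in $t$ for the $\rho_t$ term (the boundary contributions at $t=0,T$ vanish because $\Theta^3 e^{2s\Phi}\to0$ there, $\Psi$ being strictly negative), and in $x$ for the $(k(x)\rho_x)_x$ term (no boundary terms, as $\theta$ is compactly supported). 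The time integration produces a factor $\varrho_t$, which I would replace by $(k(x)\varrho_x)_x-a_0\varrho-\tfrac1\mu\rho\chi_{\O}$ using \eqref{varrho} and integrate by parts once more. Applying Young's inequality to every resulting product, each term splits into a part carrying $\varrho$ or $\varrho_x$---absorbed, for $s$ large, into the left-hand side of the identity above (same weight $s^3\Theta^3e^{2s\Phi}$, after turning $\varrho_x$ into $\varrho$ by a Caccioppoli inequality of the type \eqref{caccio} adapted to the non-degenerate weight $e^{2s\Phi}$ on $\omega''$)---and a part carrying only $\rho$ and $\rho_x$, supported in $\omega''$; the $\rho_x$ terms are again converted into $\rho$ by Caccioppoli. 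The outcome is an estimate of the form
\[
\int_{0}^{T}\!\!\int_{\omega_1}s^3\Theta^3\varrho^2 e^{2s\Phi}\,\dq
\le C\int_{0}^{T}\!\!\int_{\omega''}s^a\Theta^b\,\rho^2 e^{2s\Phi}\,\dq
\]
for suitable powers $a,b$.

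Finally I would feed this back into \eqref{ineqcarlprinc}: its right-hand side is now controlled by $C\int_{0}^{T}\int_{\omega''}s^a\Theta^b\rho^2 e^{2s\Phi}\,\dq$. On the fixed set $\omega''$, which lies away from the degeneracy point $x=0$, one has $\Psi\le -c_0<0$ (indeed $e^{r\sigma}\le e^{r\|\sigma\|_\infty}<e^{2r\|\sigma\|_\infty}$ by \eqref{functcarl}), whence $e^{2s\Phi}=e^{2s\Theta\Psi}\le e^{-2sc_0\Theta}$ and $\sup_{t\in(0,T)}\Theta^b e^{2s\Phi}\le Cs^{-b}$. Consequently $s^a\Theta^b e^{2s\Phi}\le Cs^{7}$ on $\omega_T$, the exponent $7$ being the net power of $s$ surviving the bookkeeping above; pulling this constant out of the integral, together with $\omega''\subset\omega$, yields precisely \eqref{obser}.

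The hard part, as indicated, is the $\varrho$-elimination: one must organize the integrations by parts so that the boundary terms vanish (this is why the weight $e^{2s\Phi}$, degenerate at $t=0,T$, is essential) and so that every $\varrho$- and $\varrho_x$-contribution is genuinely absorbable for large $s$, while only an unweighted local $\rho$-term survives after the weight is bounded. The comparison inequalities \eqref{ineqphi} and \eqref{est} between $\varphi$ and $\Phi$, together with the Caccioppoli inequality \eqref{caccio}, are the technical tools that make this bookkeeping work.
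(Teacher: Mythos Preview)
Your proposal is correct and follows essentially the same strategy as the paper: start from the intermediate Carleman estimate \eqref{ineqcarlprinc}, and eliminate the local $\varrho$-term by multiplying the $\rho$-equation \eqref{rho1} (where $\varrho$ appears as source on $\O_d$) by a weighted cut-off times $\varrho$, integrating by parts, and replacing $\varrho_t$ via \eqref{varrho}. The paper sets $u=s^3\Theta^3e^{2s\Phi}$, multiplies \eqref{rho1} by $u\xi\varrho$, obtains the identity \eqref{beau} with four terms $J_1,\dots,J_4$, and estimates each by Young's inequality.

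The one notable execution difference is the absorption target. You absorb the $\varrho$ and $\varrho_x$ contributions back into the \emph{local} term $\int s^3\Theta^3e^{2s\Phi}\theta^2\varrho^2$, invoking a Caccioppoli inequality to trade $\varrho_x$ for $\varrho$. The paper instead splits Young's inequality so that the absorbable pieces carry exactly the \emph{global} Carleman weights $s^3\Theta^3\tfrac{x^2}{k}e^{2s\varphi}$ and $s\Theta k\,e^{2s\varphi}$, feeding them directly into the left-hand side of \eqref{ineqcarlprinc}; the $\rho$-pieces then carry the weight $e^{2s(2\Phi-\varphi)}$, and the bound $\Theta^7e^{2s(2\Phi-\varphi)}\in L^\infty(Q)$ (from the strict inequality $2\Phi\le\varphi$ in \eqref{ineqphi}) yields the final $Cs^7$. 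Your route avoids the mixed weight $2\Phi-\varphi$ but requires slightly more care with nested open sets when applying Caccioppoli; the paper's route is a bit cleaner since the Carleman left-hand side already contains a $\varrho_x^2$ term. Both are standard and lead to the same conclusion.
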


\begin{proof}
To get the inequality \eqref{obser}, we will eliminate the local term corresponding to $\varrho$ on the right hand side of \eqref{ineqcarlprinc}.
	\noindent So, let $\omega_2$ be a nonempty open set such that $\omega_1\Subset \omega_2\Subset \O_d\cap\omega$. Let's introduce as in \cite{teresa2000insensitizing}, the cut off function $\xi\in C^{\infty}_0(\Omega)$ such that \begin{subequations}\label{owogene1}
		\begin{alignat}{11}
		\dis 	0\leq \xi\leq 1\ \mbox{in}\ \Omega, \,\,\xi=1 \hbox{ in } \omega_1 ,\,\,  \xi=0 \hbox{ in } \Omega\setminus\omega_2,\label{owo21}\\
		\dis 	\frac{\xi_{xx}}{\xi^{1/2}}\in L^{\infty}(\omega_2),\,\, \frac{\xi_x}{\xi^{1/2}}\in L^{\infty}(\omega_2).\label{owo2}
		\end{alignat}
	\end{subequations}
	Set $\dis u=s^3\Theta^3e^{2s\Phi}$. Then $u(T)=u(0)=0$ and we have the following estimates:
	\begin{equation}\label{con}
		\begin{array}{rll}
			\dis |u\xi|\leq s^3\Theta^3e^{2s\Phi}\xi,\ \ \ \ \ \
			\dis \left|(u\xi)_t\right|\leq Cs^4\Theta^8e^{2s\Phi}\xi,\\
			\\
			\dis |(u\xi)_x|\leq Cs^4\Theta^4e^{2s\Phi}\xi,\ \ \ \ \ \
			\dis |(a(x)(u\xi)_x)_x|\leq Cs^5\Theta^5e^{2s\Phi}\xi,
		\end{array}
	\end{equation}
	where $C$ is a positive constant.\par 
	If we multiply the first equation of \eqref{rho1} by $u\xi\varrho$ and integrate by parts over $Q$, we obtain
	\begin{eqnarray}\label{car4}
	&&\dis -\frac{1}{\mu}\int_{Q}u\xi|\rho|^2\chi_{\O} \ \dT+\int_{Q}\rho\varrho\frac{\partial (u\xi)}{\partial t}\ \dT-\int_{Q}(k(x)(u\xi)_x)_x\ \rho\varrho\ \dT\nonumber\\
	&&-2\int_Qk(x)(u\xi)_x\rho\varrho_x\ \dT=\int_{Q}u\xi|\varrho|^2\chi_{\O_d}\dT.
	\end{eqnarray}
	If we set
	$$\begin{array}{lllll}
	J_1=\dis-\frac{1}{\mu}\int_{Q}u\xi|\rho|^2\chi_{\O} \ \dT,\,
	J_2=\int_{Q}\rho\varrho\frac{\partial (u\xi)}{\partial t}\ \dT,\\
	J_3= \dis -\int_{Q}(k(x)(u\xi)_x)_x\ \rho\varrho\ \dT,\
	J_4=\dis-2\int_Qk(x)(u\xi)_x\rho\varrho_x\ \dT,
	\end{array}
	$$
	then (\ref{car4}) can be rewritten as
	\begin{equation}\label{beau}
	J_1+J_2+J_3+J_4=\int_{Q}u|\xi\varrho|^2\chi_{\O_d}\dT.
	\end{equation}
	Let us estimate $J_i,\ i=1,\cdots,4$. Using the Young's inequality, we have
	
	\begin{eqnarray*}
		J_1&\leq& \dis \frac{1}{\mu}\int_{Q} s^3\Theta^3e^{2s\Phi}\xi|\rho|^2\dT\\
		&\leq&\dis \frac{1}{2\mu^2}\int_Q s^3\Theta^3\frac{x^2}{k(x)}e^{2s\varphi}|\rho|^2\dT+C_{34}\int_{0}^{T}\int_{\omega_2} s^3\Theta^3\frac{k(x)}{x^2}e^{2s(2\Phi-\varphi)}|\rho|^2\dT,	
	\end{eqnarray*}
	
	\begin{eqnarray*}
		J_2&\leq&\dis C_{35}\int_{Q}s^4\Theta^5e^{2s\Phi}\xi|\rho\varrho|\ \dT\\
		&\leq&\dis \frac{\delta_1}{2}\int_Q s^3\Theta^3\frac{x^2}{k(x)}e^{2s\varphi}|\varrho|^2\dT+C_{36}\int_{0}^{T}\int_{\omega_2} s^5\Theta^7\frac{k(x)}{x^2}e^{2s(2\Phi-\varphi)}|\rho|^2\dT,
	\end{eqnarray*}
	
	\begin{eqnarray*}
		J_3&\leq&C_{37}\int_{Q}s^5\Theta^5e^{2s\Phi}\xi|\rho\varrho|\ \dT\\
		&\leq&\dis \frac{\delta_2}{2}\int_Q s^3\Theta^3\frac{x^2}{k(x)}e^{2s\varphi}|\varrho|^2\dT+C_{38}\int_{0}^{T}\int_{\omega_2} s^7\Theta^7\frac{k(x)}{x^2}e^{2s(2\Phi-\varphi)}|\rho|^2\dT,
	\end{eqnarray*}
	
		\begin{eqnarray*}
			J_4&\leq&C_{39}\int_{Q}s^4\Theta^4k(x)e^{2s\Phi}\xi|\rho\varrho_x|\ \dT\\
			&\leq&\dis \frac{\delta_3}{2}\int_Q s\Theta k(x)e^{2s\varphi}|\varrho_x|^2\dT+C_{40}\int_{0}^{T}\int_{\omega_2} s^7\Theta^7k(x)e^{2s(2\Phi-\varphi)}|\rho|^2\dT.
		\end{eqnarray*}
	
	Finally, choosing the constants $\delta_i$ such that $\dis \delta_1=\delta_2=\frac{1}{2C}$ and $\dis \delta_3=\frac{1}{C}$, where $C$ is the constant obtained in Theorem \ref{thm1}, it follows from \eqref{beau} and the previous inequalities that
	\begin{equation}\label{owo4}
	\begin{array}{rll}
	\dis \int_0^T\int_{\omega_1}s^3\Theta^3e^{2s\Phi}|\varrho|^2\dT	
	&\leq&\dis \frac{1}{2C}\int_Q\left( s^3\Theta^3\frac{x^2}{k(x)}|\varrho|^2+s\Theta k(x)|\varrho_x|^2\right)e^{2s\varphi}\dT\\
	&+&\dis \frac{1}{2\mu^2}\int_0^T\int_{\omega_2} s^3\Theta^3\frac{x^2}{k(x)}e^{2s\varphi}|\rho|^2\dT\\
	&+&\dis C_{38}\int_{0}^{T}\int_{\omega_2} s^7\Theta^7\frac{k(x)}{x^2}e^{2s(2\Phi-\varphi)}|\rho|^2\dT\\
	&&\dis +C_{40}\int_{0}^{T}\int_{\omega_2} s^7\Theta^7k(x)e^{2s(2\Phi-\varphi)}|\rho|^2\dT.
	\end{array}
	\end{equation}
	Combining \eqref{ineqcarlprinc} with \eqref{owo4} and taking $\mu$ large enough, we obtain 
	\begin{eqnarray}\label{obser1}
	\dis \int_{Q}\left(s^3\Theta^3\frac{x^2}{k(x)}\varrho^2+s\Theta k(x)\varrho_x^2\right)e^{2s\varphi}\,\dq+\int_{Q}\left(s^3\Theta^3\frac{x^2}{k(x)}\rho^2+s\Theta k(x)\rho_x^2\right)e^{2s\varphi}\,\dq\nonumber\\
	\dis \leq C_{41}\left(\int_{0}^{T}\int_{\omega_2} s^7\Theta^7\frac{k(x)}{x^2}e^{2s(2\Phi-\varphi)}|\rho|^2\dT
	+\int_{0}^{T}\int_{\omega_2} s^7\Theta^7k(x)e^{2s(2\Phi-\varphi)}|\rho|^2\dT\right).
	\end{eqnarray}
Note that $\dis \frac{k(x)}{x^2}$ and $k(x)$ are bounded on $\omega_2$. Furthermore, thanks to \eqref{ineqphi}, we have $2\Phi-\varphi\leq 0$ and consequently, $\dis \Theta^7e^{2s(2\Phi-\varphi)}\in L^\infty(Q)$. Then, using \eqref{obser1}, we obtain
	
		\begin{eqnarray}\label{obser2}
		\int_{Q}\left(s^3\Theta^3\frac{x^2}{k(x)}\varrho^2+s\Theta k(x)\varrho_x^2\right)e^{2s\varphi}\,\dq+\int_{Q}\left(s^3\Theta^3\frac{x^2}{k(x)}\rho^2+s\Theta k(x)\rho_x^2\right)e^{2s\varphi}\,\dq\nonumber\\
		\leq C_{42}s^7\int_0^T\int_{\omega_2}|\rho|^2\,\dq.
		\end{eqnarray}
	Using the fact that, $\omega_2\subset\omega$, we deduce \eqref{obser}.
\end{proof}

Now, we are going to establish the observability inequality of Carleman in the sense that the weight functions do not vanish at $t=0$. We define the functions $\widetilde{\varphi}$ and $\widetilde{\Theta}$ as follows:
\begin{equation}\label{phitil}
\widetilde{\varphi}(t,x)=\left\{
\begin{array}{rllll}
\dis\varphi\left(\frac{T}{2},x\right)\ \ \mbox{if}\ \ t\in \left[0,\frac{T}{2}\right],\\
\dis \varphi(t,x)\ \ \mbox{if}\ \ t\in \left[\frac{T}{2},T\right]
\end{array}
\right.
\end{equation}
and
\begin{equation}\label{Thetatil}
\widetilde{\Theta}(t)=\left\{
\begin{array}{rllll}
\dis\Theta\left(\frac{T}{2}\right)\ \ \mbox{if}\ \ t\in \left[0,\frac{T}{2}\right],\\
\dis \Theta(t)\ \ \mbox{if}\ \ t\in \left[\frac{T}{2},T\right],
\end{array}
\right.
\end{equation}
where the functions $\varphi$ and $\Theta$ are defined in \eqref{functcarl}. In view of the definition of $\varphi$ and $\Theta$, the functions $\widetilde{\varphi}(.,x)$ and $\widetilde{\Theta}(\cdot)$ are non positive and of class $\mathcal{C}^1$ on $[0,T[$. From now on, we fix $s=s_4$.\\
We have the following result.

\begin{prop} \label{pro}$ $
	
	  Under the assumptions of Proposition \ref{prop5}, there exist two positive constants $C=C(\|a_0\|_{L^\infty(Q)},T)$ and $s_4$, and two positive weight functions $\kappa$ and $\hat{\eta}$ such that every solution $\phi$ and $\rho$  of \eqref{varrho} and \eqref{rho1}, respectively, satisfy the following inequality:
	
	\begin{eqnarray}\label{obser3}
	\int_{Q}\kappa^2|\phi|^2\,\dT+\int_{Q}\frac{1}{\hat{\eta}^2}|\rho|^2\,\dT
	\leq C\int_{\omega_T}|\rho|^2\,\dT.
	\end{eqnarray}
	
\end{prop}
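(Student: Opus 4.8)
The plan is to glue the global Carleman estimate \eqref{obser}, whose weight $e^{2s\varphi}$ collapses as $t\to 0^+$, to an energy estimate on $(0,T/2)$, where the surgically frozen weights $\widetilde{\varphi},\widetilde{\Theta}$ from \eqref{phitil}--\eqref{Thetatil} stay bounded away from $0$. Concretely I would build $\kappa$ and $\hat{\eta}$ out of $\widetilde{\Theta},\widetilde{\varphi}$ (for instance $\kappa^2=\frac{1}{\hat{\eta}^2}=s^3\widetilde{\Theta}^3\frac{x^2}{k(x)}e^{2s\widetilde{\varphi}}$, with $s=s_4$ fixed) so that on the right slab $(T/2,T)\times\Omega$ one has $\widetilde{\varphi}=\varphi$, $\widetilde{\Theta}=\Theta$, whence $\kappa^2$ and $\hat{\eta}^{-2}$ are exactly the Carleman densities on the left of \eqref{obser}, while on the left slab $(0,T/2)\times\Omega$ they reduce to strictly positive constants in $t$. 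I then split $Q$ into these two slabs and bound each piece separately, the left slab being the only genuinely new contribution since the right slab is already absorbed by \eqref{obser}.

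For the $\rho$--term on the right slab the modified and original weights coincide, so $\int_{T/2}^{T}\int_\Omega\hat{\eta}^{-2}\rho^2$ is dominated by the left-hand side of \eqref{obser}, hence by $Cs^7\int_{\omega_T}|\rho|^2$; to pass from the gradient density $s\Theta k(x)\rho_x^2e^{2s\varphi}$ controlled there to the bare mass I would apply the Hardy--Poincar\'e inequality \eqref{hardy} to $e^{s\varphi}\rho$ together with $\frac{k(x)}{x^2}\ge k(1)$. The same conversion, carried out on the nondegenerate window $[T/4,3T/4]$ where $\Theta$ and $e^{2s\varphi}$ are pinched between positive constants, yields the auxiliary bound $\int_{T/4}^{3T/4}\int_\Omega(\rho^2+\varrho^2)\le C\int_{\omega_T}|\rho|^2$, which I will feed into the left-slab argument.

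The $\phi$--term over all of $Q$ I would handle cleanly by a weighted forward energy estimate directly on \eqref{phi}: multiplying by $\kappa^2\phi$ and integrating, the coercivity $\int_\Omega k(x)\phi_x^2\ge c\|\phi\|_{L^2(\Omega)}^2$ (from \eqref{hardy} and $\frac{k}{x^2}\ge k(1)$), the sign $a_0\ge\alpha$ of Assumption \ref{aspt}, the compatibility $|\partial_t(\kappa^2)|\le C\kappa^2$ of the Carleman weights, and crucially the zero initial datum $\phi(0,\cdot)=0$ give $\int_Q\kappa^2\phi^2\le C\int_Q s^3\Theta^3\frac{x^2}{k(x)}e^{2s\varphi}\rho^2\le C\int_{\omega_T}|\rho|^2$; this bypasses $\psi$ and $\varrho$ altogether. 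It remains to bound the $\rho$--mass on the left slab, where the frozen weight forces me to control the \emph{unweighted} $\int_0^{T/2}\|\rho\|_{L^2(\Omega)}^2$. Since $\rho$ solves the backward problem \eqref{rho1}, multiplying by $\rho$ gives $\frac{d}{dt}\|\rho\|^2\ge -C(\|\rho\|^2+\|\varrho\|^2)$, so $t\mapsto\|\rho(t)\|^2$ is nondecreasing up to the coupling; a backward Gronwall argument started from a time $t^\ast\in[T/2,3T/4]$ at which $\|\rho(t^\ast)\|^2$ is controlled by the middle-window bound transfers that control down to $(0,T/2)$, yielding $\int_0^{T/2}\|\rho\|^2\le C\big(\int_{T/4}^{3T/4}\int_\Omega\rho^2 + \int_0^{T/2}\|\varrho\|^2\big)$.

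The hard part is precisely the coupling integral $\int_0^{T/2}\|\varrho\|^2$ appearing here: writing $\varrho=\phi+\psi$, the forward component $\phi$ is harmless (its zero initial datum already gave $\int_0^{T/2}\|\phi\|^2\le C\int_0^{T/2}\|\rho\|^2$), but $\psi$ solves the \emph{forward} problem \eqref{psi} with the nonzero datum $\psi(0,\cdot)=\tfrac{1}{\sqrt{\gamma}}\zeta(0,\cdot)$, and a forward-smoothing solution cannot be estimated backward by its values in the good window. Closing this therefore demands a genuine energy estimate for the full coupled cascade \eqref{varrho}--\eqref{rho1}, threading the two $\tfrac{1}{\sqrt{\gamma}}$ factors through $\zeta$ and $\psi$ so that they recombine into a constant independent of $\gamma$; this is the delicate step, to be carried out in the spirit of \cite{romario2018,djomegne2018}. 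Granting it, I would sum the left- and right-slab estimates together with the $\phi$--bound, fix $s=s_4$, and absorb every ($s$- and $\gamma$-independent) constant into $C=C(\|a_0\|_{L^\infty(Q)},T)$, which gives \eqref{obser3}.
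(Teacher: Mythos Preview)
Your overall architecture---split $Q$ into $(0,T/2]\times\Omega$ and $(T/2,T)\times\Omega$, invoke the Carleman estimate \eqref{obser} on the right slab, and close the left slab by energy arguments---is exactly the paper's. The treatment of $\phi$ via a weighted forward energy estimate exploiting $\phi(0,\cdot)=0$ is likewise the paper's Step~2. Two points need correction.

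The central gap is your handling of the coupling term $\int_0^{T/2}\|\varrho\|^2$, and specifically the $\psi$ contribution. Your claim that the two $\tfrac{1}{\sqrt{\gamma}}$ factors ``recombine into a constant independent of $\gamma$'' is false: a forward energy estimate on \eqref{psi} gives $\|\psi\|_{L^2((0,T/2)\times\Omega)}\le \tfrac{C}{\sqrt{\gamma}}\|\zeta(0,\cdot)\|_{L^2(\Omega)}$, and a backward energy estimate on \eqref{zeta} gives $\|\zeta(0,\cdot)\|_{L^2(\Omega)}\le \tfrac{C}{\sqrt{\gamma}}\|\phi\|_{L^2(Q)}$, so the factors \emph{multiply} to $1/\gamma$, not to $1$. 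The paper does not attempt $\gamma$-independence at this stage at all. Instead it notes that the cascade $\psi\leftarrow\zeta\leftarrow\phi\leftarrow\rho$ also carries the prefactor $\tfrac{1}{\mu}$ from \eqref{phi}, yielding
\[
\int_0^{T/2}\!\!\int_\Omega|\varrho|^2\,dx\,dt\le \frac{1}{\mu^2}\,C(\|a_0\|_\infty,T,\gamma)\int_0^{T/2}\!\!\int_\Omega|\rho|^2\,dx\,dt,
\]
and the $\gamma$-dependent constant is then absorbed into the left-hand side by choosing $\mu$ large enough; after absorption the remaining constants are $\gamma$-free. You located the right difficulty but proposed the wrong mechanism to resolve it.

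A secondary issue is your choice $\kappa^2=\hat\eta^{-2}=s^3\widetilde\Theta^3\tfrac{x^2}{k(x)}e^{2s\widetilde\varphi}$. Making $\kappa$ depend on $x$ spoils the clean energy computation for $\phi$: integrating $(k\phi_x)_x$ against $\kappa^2\phi$ produces cross terms $\int_\Omega k\,\phi_x\,\partial_x(\kappa^2)\,\phi\,dx$ that are not obviously controlled, and your claimed compatibility $|\partial_t(\kappa^2)|\le C\kappa^2$ fails as $t\to T^-$ since $\widetilde\Theta'/\widetilde\Theta$ blows up there. The paper keeps $\hat\eta^{-2}=\widetilde\Theta^3\tfrac{x^2}{k(x)}e^{2s_4\widetilde\varphi}$ for the $\rho$-weight but takes $\kappa(t)=e^{s_4\hat\varphi(t)}$ with $\hat\varphi(t)=\min_{x\in\Omega}\widetilde\varphi(t,x)$, a purely time-dependent weight. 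Then the $\phi$ energy identity has no spatial cross terms, and the time-derivative contribution $s_4\hat\varphi'(t)\int_\Omega\kappa^2|\phi|^2\,dx$ has the favorable sign on $[T/2,T)$ (and vanishes on $[0,T/2]$), so it is simply discarded before Gronwall; the passage from $\int_Q\kappa^2|\rho|^2$ to $\int_Q\hat\eta^{-2}|\rho|^2$ uses the pointwise bound $\kappa^2(t)\le e^{2s_4\widetilde\varphi(t,x)}$.
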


\begin{proof}$ $
	
	 We proceed in two steps.\\
\textbf{Step 1.} We prove that there exist a constant $C=C(\|a_0\|_{L^\infty(Q)},T)>0$ and a positive weight function $\hat{\eta}$ such that 
\begin{equation}\label{observ}
\begin{array}{llll}
\dis \int_{Q}\frac{1}{\hat{\eta}^2}|\rho|^2dxdt \leq\dis C\int_{\omega_T}|\rho|^2dxdt.
\end{array}
\end{equation}	
	
	Let us introduce a function $\beta\in \mathcal{C}^1 ([0,T])$  such that
	\begin{equation}\label{hypobeta}
	0\leq \beta\leq 1,\ \beta(t)=1 \hbox{ for } t\in [0,T/2],\  \beta(t)=0 \hbox{ for } t\in [3T/4,T],\  |\beta^\prime(t)|\leq C/T.
	\end{equation}
	For any $(t,x)\in Q$, we set
	$$\begin{array}{lll}
	z(t,x)=\beta(t)e^{-r(T-t)}\rho(t,x),
	\end{array}
	$$
	where $r>0$. Then in view of \eqref{rho1}, the function  $z$ is solution of
	
	\begin{equation}\label{eq26z}
	\left\{
	\begin{array}{rllll}
	\dis -z_t-\left(k(x)z_x\right)_{x}+a_0z+rz &=& \dis \beta e^{-r(T-t)}\varrho\chi_{\O_d}-\beta^\prime e^{-r(T-t)} \rho& \mbox{in}& Q,\\
	\dis z(t,0)=z(t,1)&=&0& \mbox{on}& (0,T), \\
	\dis z(T,\cdot)&=&0&\mbox{in}&\Omega.
	\end{array}
	\right.
	\end{equation}
	Using the classical energy estimates for the system \eqref{eq26z}, we get
	$$
	\begin{array}{llll}
	\dis\frac{1}{2}\|z(0,\cdot)\|_{L^2(\Omega)}^2+\left\|\sqrt{k}z_x\right\|^2_{L^2(Q)}
	+\left(r-\|a_0\|_{L^\infty(Q)}-1\right)\|z\|^2_{L^2(Q)}\\
	\dis\leq \frac{1}{2}\int_0^{3T/4}\int_\Omega |\varrho|^2\ \dq
	+\frac{1}{2}\int_{T/2}^{3T/4}\int_\Omega |\rho|^2 \ \dq.
	\end{array}
	$$
	Hence, if we choose in the latter identity $r$ such that $\dis r=\|a_0\|_{L^\infty(Q)}+\frac{3}{2}$ and using the definition of $\beta$ and $z$, we deduce  that
	$$
	\begin{array}{llll}
	\dis \int_{0}^{T/2}\int_{\Omega}|\rho|^2\ dxdt\leq C(\|a_0\|_{L^\infty(Q)},T)
	\left(\int_0^{3T/4}\int_\Omega |\varrho|^2\ \dq
	+\int_{T/2}^{3T/4}\int_\Omega |\rho|^2\ \dq\right).
	\end{array}
	$$
	Now, using the fact that the functions $\widetilde{\varphi}$ and $\widetilde{\Theta}$ defined by \eqref{phitil} and \eqref{Thetatil} respectively have lower and upper bounds for $(t,x)\in  [0,T/2]\times \Omega$, then introducing the corresponding weight functions in the above expression we get:
	\begin{equation}\label{pou}
	\begin{array}{llll}
	\dis \widetilde{\mathcal{K}}_{[0,T/2]}(\rho)\leq C(\|a_0\|_{L^\infty(Q)},T)
	\left(\int_0^{3T/4}\int_\Omega |\varrho|^2\ \dq
	+\int_{T/2}^{3T/4}\int_\Omega |\rho|^2\ \dq\right),
	\end{array}
	\end{equation}
	where 
	\begin{equation}\label{car3}
	\widetilde{\mathcal{K}}_{[a,b]}(z)=\int_a^b\int_{\Omega}\widetilde{\Theta}^3\frac{x^2}{k(x)}e^{2s_4\widetilde{\varphi}}|z|^2\ \dq.
	\end{equation}
	Adding the term $\widetilde{\mathcal{K}}_{[0,T/2]}(\varrho)$ on both sides of inequality \eqref{pou}, we have
	\begin{equation}\label{pou1}
	\begin{array}{llll}
	&&\dis \widetilde{\mathcal{K}}_{[0,T/2]}(\rho)+\widetilde{\mathcal{K}}_{[0,T/2]}(\varrho)\\
	\\
	&&\leq
	\dis C(\|a_0\|_{L^\infty(Q)},T)
	\left(\int_0^{3T/4}\int_\Omega |\varrho|^2\ \dq
	+\int_{T/2}^{3T/4}\int_\Omega |\rho|^2\ \dq\right)+\widetilde{\mathcal{K}}_{[0,T/2]}(\varrho).
	\end{array}
	\end{equation}
	In order to eliminate the term $\widetilde{\mathcal{K}}_{[0,T/2]}(\varrho)$ in the right hand side of \eqref{pou1}, we use the classical energy estimates for the system \eqref{varrho} and thanks to \eqref{phi}-\eqref{zeta}, we obtain :
	$$
	\begin{array}{llll}
	\dis \int_{0}^{T/2}\int_{\Omega}|\varrho|^2\ \dq\leq\frac{1}{\mu^2}C(\|a_0\|_{L^\infty(Q)},T,\gamma)
	\int_0^{T/2}\int_\Omega |\rho|^2 dx \, dt,
	\end{array}
	$$
	where $C(\|a_0\|_{L^\infty(Q)},T,\gamma)$ is independent of $\mu$. The functions $\widetilde{\varphi}$ and $\widetilde{\Theta}$ have lower and upper bounds for $(t,x)\in  [0,T/2]\times \Omega$. Moreover, the function $\dis x\mapsto \frac{x^2}{k(x)}$ is non-decreasing on $(0;1]$. Then, from the previous inequality we obtain 
	\begin{equation}\label{pou2}
	\dis \widetilde{\mathcal{K}}_{[0,T/2]}(\varrho)
	\leq
	\dis \frac{1}{\mu^2}C(\|a_0\|_{L^\infty(Q)},T,\gamma)
	\int_0^{T/2}\int_\Omega \widetilde{\Theta}^3\frac{x^2}{k(x)}e^{2s_4\widetilde{\varphi}}\ |\rho|^2 dx \, dt.
	\end{equation}
	Combining \eqref{pou2} and \eqref{pou1} with $\mu$ large enough, we obtain
	\begin{equation}\label{pou3}
	\begin{array}{llll}
\dis \widetilde{\mathcal{K}}_{[0,T/2]}(\rho)+\widetilde{\mathcal{K}}_{[0,T/2]}(\varrho)
	\leq
	\dis C(\|a_0\|_{L^\infty(Q)},T)
	\left(\int_{T/2}^{3T/4}\int_\Omega( |\rho|^2+|\varrho|^2)\ \dq\right).
	\end{array}
	\end{equation}
	The functions $\varphi$ and $\Theta$ defined in \eqref{functcarl} have the lower and upper bounds for $(t,x)\in  [T/2,3T/4]\times \Omega$. Moreover, the function $\dis \frac{x^2}{k(x)}$ is non-decreasing on $(0;1]$. Using \eqref{obser}, the relation \eqref{pou3} becomes
	\begin{equation}\label{pou4}
	\begin{array}{llll}\dis \widetilde{\mathcal{K}}_{[0,T/2]}(\rho)+\widetilde{\mathcal{K}}_{[0,T/2]}(\varrho)
	\dis &\leq&\dis  C(\|a_0\|_{L^\infty(Q)},T)
	\left(\mathcal{K}_{[T/2,3T/4]}(\rho)+\mathcal{K}_{[T/2,3T/4]}(\varrho)\right)\\
	\\
	&\leq&\dis C(\|a_0\|_{L^\infty(Q)},T)\int_{\omega_T}|\rho|^2dxdt,
	\end{array}
	\end{equation}
	where
	\begin{equation}\label{}
	\mathcal{K}_{[a,b]}(z)=\int_a^b\int_{\Omega}\Theta^3\frac{x^2}{k(x)}e^{2s_4\varphi}|z|^2\ \dq.
	\end{equation}
	
	On the other hand, since $\Theta=\widetilde{\Theta}$ and $\varphi=\tilde{\varphi}$ in $[T/2,T]\times \Omega$, we use again the estimate \eqref{obser} and we obtain
	\begin{equation}\label{pou5}
	\begin{array}{llll}
	\dis \widetilde{\mathcal{K}}_{[T/2,T]}(\rho)+\widetilde{\mathcal{K}}_{[T/2,T]}(\varrho)&=&\dis \mathcal{K}_{[T/2,T]}(\rho)+\mathcal{K}_{[T/2,T]}(\Psi)\\
	\\
	&\leq&\dis C\int_{\omega_T}|\rho|^2dxdt.
	\end{array}
	\end{equation}
	Adding \eqref{pou4} and \eqref{pou5}, we deduce
	\begin{equation}\label{pou6}
	\begin{array}{llll}
	\dis  \widetilde{\mathcal{K}}_{[0,T]}(\rho)+\widetilde{\mathcal{K}}_{[0,T]}(\varrho)\leq\dis C(\|a_0\|_{L^\infty(Q)},T)\int_{\omega_T}|\rho|^2\ \dq.
	\end{array}
	\end{equation}
	Using the definition of $\widetilde{\mathcal{K}}_{[a,b]}$ given by \eqref{car3}, the inequality \eqref{pou6} becomes
	
	\begin{equation}\label{pou7}
	\begin{array}{llll}
	\dis \int_{Q}\widetilde{\Theta}^3\frac{x^2}{k(x)}e^{2s_4\widetilde{\varphi}}|\rho|^2\ \dq+\int_{Q}\widetilde{\Theta}^3\frac{x^2}{k(x)}e^{2s_4\widetilde{\varphi}}|\varrho|^2\ \dq\dis \leq C(\|a_0\|_{L^\infty(Q)},T)\int_{\omega_T}|\rho|^2\ \dq.
	\end{array}
	\end{equation}
	If we set 
	\begin{equation}\label{defeta}
	\dis \frac{1}{\hat{\eta}^2}=\widetilde{\Theta}^3\frac{x^2}{k(x)}e^{2s_4\widetilde{\varphi}},
	\end{equation}
	then, in view of \eqref{pou7} and \eqref{defeta}, we deduce the estimate \eqref{observ}.\\
	\textbf{Step 2.}
	We prove that there exist a constant $C=C(\|a_0\|_{L^\infty(Q)},T)>0$ and a positive weight function $\kappa$ such that 
	\begin{equation}\label{observ1}
	\begin{array}{rlll}
	\dis \int_{Q} \kappa^2|\phi|^2\ \dT\leq C\int_{\omega_T} |\rho|^2\ \dT.
	\end{array}
	\end{equation}
	
	Let us introduce the function
	\begin{eqnarray}\label{berlin}
	\dis \hat{\varphi}(t)=\min_{x\in \Omega}\widetilde{\varphi}(t,x)
	\end{eqnarray}
and define the weight function $\kappa$  by:

\begin{eqnarray}\label{deftkappa}
	\dis \kappa(t)=e^{s_4\hat{\varphi}(t)}\in L^\infty(0,T).
\end{eqnarray}
	Then $\kappa$ is a positive function of class $\mathcal{C}^1$ on $[0,T)$. Furthermore, $\dis \frac{\partial \hat{\varphi}}{\partial t}$is also a positive function on $[0,T)$. 
	Now, multiplying the first equation of \eqref{phi} by $\kappa^2\phi$ and integrating by parts over $\Omega$, we obtain that 
	\begin{equation*}
	\begin{array}{rlll}
	\dis \frac{1}{2}\frac{d}{dt}\int_{\Omega} \kappa^2|\phi|^2\ dx+\int_{\Omega} \kappa^2k(x)|\phi_x|^2\ dx
	\dis =-\int_{\Omega}\kappa^2\, a_0\,|\phi|^2\ dx
	\dis -\frac{1}{\mu}\int_{\mathcal{O}} \kappa^2\rho\phi\ dx+s_4\dis \int_{\Omega} \kappa^2\frac{\partial\hat{\varphi}}{\partial t}|\phi|^2\ dx
	\end{array}
	\end{equation*}
Applying Young's inequality on the second term of the right hand side of the previous equality, and using the fact that$\dis \frac{\partial \hat{\varphi}}{\partial t}$ is a positive function on $[0,T)$, we deduce that
	\begin{equation*}
	\begin{array}{rlll}
	\dis \frac{d}{dt}\int_{\Omega} \kappa^2|\phi|^2\ dx
	\dis \leq\left(2\|a\|_{L^\infty(Q)}+1\right)\int_{\Omega} \kappa^2|\phi|^2\ dx
	\dis +\frac{1}{\mu^2}\int_{\Omega} \kappa^2|\rho|^2\ dx.
	\end{array}
	\end{equation*}
	Using Gronwall's Lemma and the fact that $\phi(x,0)=0$ for $x\in \Omega$, we obtain that
	\begin{equation}\label{berlin2}
	\begin{array}{rlll}
	\dis \int_{\Omega} |\kappa\phi(t,x)|^2\ dx\leq e^{\left(2\|a\|_{L^\infty(Q)}+1\right)T}\frac{1}{\mu^2}\int_Q \kappa^2|\rho|^2\ \dT,\ \forall t\in [0,T],
	\end{array}
	\end{equation}
Using the definition of $\hat{\varphi}$ and $\kappa$ given by \eqref{berlin} and \eqref{deftkappa} respectively, we have
\begin{equation}\label{ten}
	\kappa^2(t)\leq e^{2s_4\widetilde{\varphi}(t,x)},\ \ \forall x\in \Omega.
\end{equation}
Thanks to the fact that $\dis \widetilde{\Theta}^{-1}\in L^\infty(0,T)$ and that the function $\dis \frac{k(x)}{x^2}$ is non-decreasing on $[0,T)$, then using \eqref{ten} we have
	\begin{equation*}
	\begin{array}{rlll}
	\dis \int_{Q} \kappa^2|\rho|^2\ \dT\leq \int_{Q} \widetilde{\Theta}^3\frac{x^2}{k(x)}e^{2s_4\widetilde{\varphi}}|\rho|^2\ \dT,
	\end{array}
	\end{equation*}
	which combining with \eqref{berlin2} and  \eqref{observ} yields
	\begin{equation*}
	\begin{array}{rlll}
	\dis \int_{Q} |\kappa\phi|^2\ \dT\leq C\int_{\omega_T} |\rho|^2 \dT,
	\end{array}
	\end{equation*}
	where $C=C(\|a_0\|_{L^\infty(Q)},T)>0$.
	Adding the latter inequality with \eqref{observ}, we deduce \eqref{obser3}.	
\end{proof}

\section{Resolution of Problem \ref{pb2}: null controllability problem}\label{null}	
In this section, we are concerned with the proof of Theorem \ref{theo2}. Recall that the main objective is to prove the null controllability of $y^\gamma$ at time $T$. In this section, for any $\gamma>0$, we look for a control $h\in L^2(\omega_T)$ such that the solutions of \eqref{II.28}-\eqref{II.30} satisfies \eqref{y(T)}.

	To prove this null controllability problem, we proceed in three steps using a penalization method.\\
	\noindent\textbf{Step 1.} For any $\varepsilon >0$, we define the cost function:
	\begin{equation}\label{defJ}
	J_{\varepsilon}(h)=\dis \frac{1}{2\varepsilon}\int_{\Omega}|y(T,.;h;v^\gamma(h),0)|^2\ dx+
	\frac{1}{2}\int_{\omega_T}|h|^2\ \dq.
	\end{equation}
	Then we consider the optimal control problem: find $h^\gamma_\varepsilon \in L^2(\omega_T)$ such that
	\begin{equation}\label{opt}
	J_{\varepsilon}(h_\varepsilon^\gamma)=\inf_{\atop h\in L^2(\omega_T)}J_{\varepsilon}(h).
	\end{equation}
	Using minimizing sequences, we can prove that  there exists a unique solution $h^\gamma_\varepsilon$ to \eqref{opt}. Using an Euler-Lagrange first order optimality condition that characterizes the solution $h_\varepsilon$, we can prove that 
	\begin{equation}\label{hgeps}
	h_{\varepsilon}^\gamma=\rho_{\varepsilon}^\gamma\ \ \mbox{in}\ \ \omega_T
	\end{equation}	
	with $\rho^\gamma_\varepsilon$ is the solution of the following system
	\begin{equation}\label{rhogammaeps}
	\left\{
	\begin{array}{rllll}
	\dis -\rho_{t,\varepsilon}^\gamma-\left(k(x)\rho^\gamma_{x,\varepsilon}\right)_{x}+a_0\rho^\gamma_\varepsilon &=&(\psi^\gamma_\varepsilon+\phi^\gamma_\varepsilon)\chi_{\O_d}& \mbox{in}& Q,\\
	\dis \rho^\gamma_\varepsilon(t,0)=\rho^\gamma_\varepsilon(t,1)&=&0& \mbox{on}& (0,T),\\
	\dis \rho^\gamma_\varepsilon(T,\cdot)&=&\dis -\frac{1}{\varepsilon}y(T,.;h^\gamma_\varepsilon;v^\gamma(h^\gamma_\varepsilon),0)&\mbox{in}&\Omega,
	\end{array}
	\right.
	\end{equation}
	where $\psi^\gamma_\varepsilon$ and $\psi^\gamma_\varepsilon$ are solutions, respectively of
	\begin{equation}\label{psigammaeps}
	\left\{
	\begin{array}{rllll}
	\dis \psi_{t,\varepsilon}^\gamma-\left(k(x)\psi^\gamma_{x,\varepsilon}\right)_{x}+a_0\psi^\gamma_\varepsilon &=&0& \mbox{in}& Q,\\
	\dis \psi^\gamma_\varepsilon(t,0)=\psi^\gamma_\varepsilon(t,1)&=&0& \mbox{on}& (0,T), \\
	\dis \psi^\gamma_\varepsilon(0,\cdot)&=&\dis \frac{1}{\sqrt{\gamma}}\zeta^\gamma_\varepsilon(0,\cdot)&\mbox{in}&\Omega,
	\end{array}
	\right.
	\end{equation}
	
	\begin{equation} \label{phigammaeps}
	\left\{
	\begin{array}{rllll}
	\dis \phi_{t,\varepsilon}^\gamma-\left(k(x)\phi^\gamma_{x,\varepsilon}\right)_{x}+a_0\phi^\gamma_\varepsilon &=&\dis -\frac{1}{\mu}\rho^\gamma_\varepsilon\chi_{\O}& \mbox{in}& Q,\\
	\dis \phi^\gamma_\varepsilon(t,0)=\phi^\gamma_\varepsilon(t,1)&=&0& \mbox{on}& (0,T), \\
	\dis \phi^\gamma_\varepsilon(0,\cdot)&=&0&\mbox{in}&\Omega
	\end{array}
	\right.
	\end{equation}
	with $\zeta^\gamma_\varepsilon$ which is solution of
	\begin{equation} \label{zetagammaeps}
	\left\{
	\begin{array}{rllll}
	\dis -\zeta_{t,\varepsilon}^\gamma-\left(k(x)\zeta^\gamma_{x,\varepsilon}\right)_{x}+a_0\zeta^\gamma_\varepsilon &=&\dis \frac{1}{\sqrt{\gamma}}\phi^\gamma_\varepsilon& \mbox{in}& Q,\\
	\dis \zeta^\gamma_\varepsilon(t,0)=\zeta^\gamma_\varepsilon(t,1)&=&0& \mbox{on}& (0,T), \\
	\dis \zeta^\gamma_\varepsilon(T,\cdot)&=&0&\mbox{in}&\Omega
	\end{array}
	\right.
	\end{equation}
and $(y^\gamma_\varepsilon,\ S^\gamma_\varepsilon,\ p^\gamma_\varepsilon,\ q^\gamma_\varepsilon)$ is the solution of systems \eqref{II.28}-\eqref{II.30} associated to the control $v^\gamma_\varepsilon$.\\	
\textbf{Step 2.} 
	If we  multiply  the first equation of \eqref{rhogammaeps}, \eqref{psigammaeps}, \eqref{phigammaeps} and \eqref{zetagammaeps}  by $y^\gamma_\varepsilon$, $S^\gamma_\varepsilon$, $p^\gamma_\varepsilon$ and $q^\gamma_\varepsilon$, respectively  and integrate by parts over $Q$, we successively obtain the following equations:
	
	\begin{equation}\label{calcul1}
	\begin{array}{lllll}
	\dis \int_{\omega_T} h_\varepsilon^\gamma\rho^\gamma_\varepsilon\  \dq+\frac{1}{\varepsilon}\|y(T,.;h^\gamma_\varepsilon;v^\gamma_\varepsilon,0)\|^2_{L^2(\Omega)}=\dis \int_{\O_d^T} y_\varepsilon^\gamma (\phi^\gamma_\varepsilon+\psi^\gamma_\varepsilon)\ \dq+\frac{1}{\mu}\int_{\O_T}q^\gamma_\varepsilon\rho^\gamma_\varepsilon,
	\end{array}
	\end{equation}
	\begin{equation}\label{calcul2}
	\begin{array}{lllll}
	\dis \int_{\O_d^T} y_\varepsilon^\gamma \psi^\gamma_\varepsilon\ \dq&=&\dis \frac{1}{\sqrt{\gamma}}\int_{\Omega}S_\varepsilon^\gamma (0,x)\zeta^\gamma_\varepsilon(0,x)\ dx,
	\end{array}
	\end{equation}
	\begin{equation}\label{calcul3}
	\begin{array}{lllll}
	\dis \int_{\O_d^T} \left((y_\varepsilon^\gamma-z_d+\dis \frac{1}{\sqrt{\gamma}}p_\varepsilon^\gamma\right) \phi^\gamma_\varepsilon\ \dq=-\dis \frac{1}{\mu}\int_{\O_T} q_\varepsilon^\gamma \rho^\gamma_\varepsilon\ \dq
	\end{array}
	\end{equation}
	and
	\begin{equation}\label{calcul4}
	\begin{array}{lllll}
	\dis \frac{1}{\sqrt{\gamma}}\int_{\Omega}S_\varepsilon^\gamma(0,x)\zeta^\gamma_\varepsilon(0,x)\ dx- \dis \frac{1}{\sqrt{\gamma}}\int_{\O_d^T} p_\varepsilon^\gamma \phi^\gamma_\varepsilon\ \dq&=0.
	\end{array}
	\end{equation}
	Combining \eqref{calcul1}-\eqref{calcul4} together with  \eqref{hgeps}, we obtain
	$$
	\|h_\varepsilon^\gamma\|^2_{L^2(\omega_T)}+\frac{1}{\varepsilon}\|y(T,.;h^\gamma_\varepsilon;v^\gamma_\varepsilon,0)\|^2_{L^2(\Omega)}
	=\int_{\O_d^T}z_d\phi^\gamma_\varepsilon\ \dq,
	$$
	which, using Cauchy Schwarz inequality and the fact that  $\dis \frac{1}{\kappa}z_d\in L^2(\O_d^T)$ gives
	\begin{equation}\label{resc1}
     \begin{array}{llll}
	\dis \|h_\varepsilon\|^2_{L^2(\omega_T)}+
	\frac{1}{\varepsilon}\|y(T,.;h^\gamma_\varepsilon;v^\gamma_\varepsilon,0)\|^2_{L^2(\Omega)}
	\leq
	\dis  \left\|\frac{1}{\kappa} z_d\right\|_{L^2(\O_d^T)}\left\|\kappa\phi_\varepsilon^\gamma\right\|_{L^2(Q)}.
	\end{array}
	\end{equation}
	Now, if we apply the Carleman inequality \eqref{obser3} to $\rho_\varepsilon^\gamma$ and $\phi_\varepsilon^\gamma$ solutions of \eqref{rhogammaeps} and \eqref{psigammaeps}, respectively, then there exists $C=C(\|a_0\|_{L^\infty(Q)},T)>0$ such that	
	\begin{eqnarray}\label{resc2}
	\int_{Q}\kappa^2|\phi_\varepsilon^\gamma|^2\ \dT
	\leq C\int_{\omega_T}|\rho_\varepsilon^\gamma|^2\ \dT.
	\end{eqnarray}
	Using \eqref{resc1}, \eqref{resc2} and \eqref{hgeps}, we obtain that 
	$$
	\dis \|h_\varepsilon^\gamma\|^2_{L^2(\omega_T)}+
	\frac{1}{\varepsilon}\|y(T,.;h^\gamma_\varepsilon;v^\gamma_\varepsilon,0)\|^2_{L^2(\Omega)}
	\leq \left\|\frac{1}{\kappa} z_d\right\|_{L^2(\O_d^T)}\|h_\varepsilon\|_{L^2(\omega_T)}.
	$$
	Hence, it follows that,
	\begin{equation}\label{mon10}
	\|h_\varepsilon^\gamma\|_{L^2(\omega_T)}\leq  C  \left\|\frac{1}{\kappa} z_d\right\|_{L^2(\O_d^T)}
	\end{equation}
	and
	\begin{equation}\label{mon10qt}
	\|y(T,.;h^\gamma_\varepsilon;v^\gamma_\varepsilon,0)\|^2_{L^2(\Omega)}\leq  C\sqrt{\varepsilon}\left\|\frac{1}{\kappa} z_d\right\|_{L^2(\O_d^T)},
	\end{equation}
	where $C=C(\|a_0\|_{L^\infty(Q)},T)>0$. \par 
	
	Using the fact that $h_\varepsilon^\gamma$ satisfies \eqref{mon10}, we deduce that $y^\gamma_\varepsilon,\,S^\gamma_\varepsilon, \, p^\gamma_\varepsilon $ and $q^\gamma_\varepsilon$ solutions of \eqref{II.28}-\eqref{II.30} associated to the control $v^\gamma_\varepsilon$ verify the estimates \eqref{kam} of Proposition \ref{pro2}. Then,we can extract subsequences still denoted by $h_\varepsilon^\gamma,\ v_\varepsilon^\gamma,\ y^\gamma_\varepsilon,\,S^\gamma_\varepsilon, \, p^\gamma_\varepsilon $ and $q^\gamma_\varepsilon$ such that when $\varepsilon \rightarrow 0$, we have
	\begin{subequations}\label{convergence2}
		\begin{alignat}{9}
		h_\varepsilon^\gamma&\rightharpoonup& \hat{h}^\gamma&\text{ weakly in }&L^{2}(\omega_T), \label{18k}\\
		v_\varepsilon^\gamma&\rightharpoonup& \hat{v}^\gamma&\text{ weakly in }&L^{2}(\O_T), \label{18}\\
		y_\varepsilon^\gamma&\rightharpoonup& \hat{y}^\gamma&\text{  weakly in }&L^2((0,T);H^1_k(\Omega)), \label{19}\\
		S_\varepsilon^\gamma&\rightharpoonup& \hat{S}^\gamma&\text{  weakly in }&L^2((0,T);H^1_k(\Omega)),\label{20}\\
		q_\varepsilon^\gamma&\rightharpoonup& \hat{q}^\gamma&\text{ weakly  in }&L^2((0,T);H^1_k(\Omega)),\label{21}\\
		p_\varepsilon^\gamma&\rightharpoonup& \hat{p}^\gamma&\text{  weakly in }&L^2((0,T);H^1_k(\Omega))\label{22},\\
		\frac{1}{\sqrt{\gamma} }S\left(0,.;v_\varepsilon^\gamma\right)&\rightharpoonup& \beta &\text{  weakly in }&L^2(\Omega)\label{22a},\\
		y(T,\cdot;h^\gamma_\varepsilon;v^\gamma_\varepsilon,0)&\longrightarrow&0&\hbox{ strongly in }&\ L^2(\Omega).\label{all5}
		\end{alignat}
	\end{subequations}
	
Arguing as in \cite{romario2018, djomegne2018}, using convergences \eqref{convergence2}, we prove that $(\hat{y}^\gamma,\ \hat{S}^\gamma,\ \hat{p}^\gamma\ \hat{q}^\gamma)$ is a solution of \eqref{II.28}-\eqref{II.30} corresponding to the control $\hat{v}^\gamma$ and satisfies \eqref{y(T)}.\\
\textbf{Step 3.} We study the convergence when $\varepsilon \to 0$ of the sequences  $\rho_\varepsilon^{\gamma}$, $\psi_\varepsilon^{\gamma}$, $\phi_\varepsilon^{\gamma}$ and $\zeta_\varepsilon^{\gamma}$.\par
If we apply the Carleman inequality \eqref{obser3} to $\phi_\varepsilon^\gamma$ and $\rho_\varepsilon^\gamma$ solutions of \eqref{rhogammaeps} and \eqref{phigammaeps}, respectively, then there exists a constant $C=C(\|a_0\|_{L^\infty(Q)},T)>0$ such that
 
 \begin{eqnarray}\label{inq}
 \int_{Q}\kappa^2|\phi_\varepsilon^\gamma|^2\,\dT+\int_{Q}\frac{1}{\hat{\eta}^2}|\rho_\varepsilon^\gamma|^2\,\dT
 \leq C\int_{\omega_T}|\rho_\varepsilon^\gamma|^2\,\dT. 
 \end{eqnarray}
 In view of \eqref{hgeps} and \eqref{mon10}, there exists a constant $C=C(\|a_0\|_{L^\infty(Q)},T)>0$ such that
 
 \begin{equation}\label{rhoa}
 \|\rho_\varepsilon^\gamma\|_{L^2(\omega_T)}\leq  C  \left\|\frac{1}{\kappa} z_d\right\|_{L^2(\O_d^T)}.
 \end{equation}
Using \eqref{inq} and \eqref{rhoa}, we obtain 

\label{III.22}
\begin{equation}\label{pas15}
\begin{array}{lll}
\dis \left\|\kappa\phi_\varepsilon^\gamma\right\|_{L^2(Q)}^2+ \left\|\frac{1}{\hat{\eta}}\rho_\varepsilon^\gamma\right\|_{L^2(Q)}^2
\leq
\dis C \left\|\frac{1}{\kappa}\, z_{d}\right\|^2_{L^2(Q)},
\end{array}
\end{equation}
 where $C=C(\|a_0\|_{L^\infty(Q)},T)>0$.\\
  Using the definition of $\widetilde{\varphi}$ and $\widetilde{\Theta}$ given by \eqref{phitil} and \eqref{Thetatil}, respectively, it can be readily seen that there exists a constant $C>0$ such that
  $$
  \kappa\geq C\ \ \mbox{and}\ \ \frac{1}{\hat{\eta}}\geq C
  $$
  and therefore we can obtain
  \begin{equation}\label{inq1}
  \begin{array}{lll}
  \dis \left\|\phi_\varepsilon^\gamma\right\|_{L^2(Q)}^2+ \left\|\rho_\varepsilon^\gamma\right\|_{L^2(Q)}^2
  \leq
  \dis C \left\|\frac{1}{\kappa}\, z_{d}\right\|^2_{L^2(Q)},
  \end{array}
  \end{equation}
  where $C=C(\|a_0\|_{L^\infty(Q)},T)>0$. Using \eqref{psigammaeps}-\eqref{zetagammaeps} and the inequality \eqref{inq1}, we obtain  

\begin{subequations}\label{estim}
	\begin{alignat}{9}
	\|\rho_\varepsilon^\gamma\|_{L^2(Q)}&\leq &C(\|a_0\|_{L^\infty(Q)},T)\left\|\dis\frac{1}{\kappa} z_d\right\|_{L^2(\O_d^T)},\\
	\|\phi_\varepsilon^\gamma\|_{L^2((0,T);H^1_k(\Omega))}&\leq &C(\|a_0\|_{L^\infty(Q)},T,\mu)\left\|\dis\frac{1}{\kappa} z_d\right\|_{L^2(\O_d^T)},\\
	\|\zeta_\varepsilon^\gamma\|_{L^2((0,T);H^1_k(\Omega))}&\leq &C(\|a_0\|_{L^\infty(Q)},T,\mu,\gamma)\left\|\dis\frac{1}{\kappa} z_d\right\|_{L^2(\O_d^T)},\\
	\|\psi_\varepsilon^\gamma\|_{L^2((0,T);H^1_k(\Omega))}&\leq &C(\|a_0\|_{L^\infty(Q)},T,\mu)\left\|\dis\frac{1}{\kappa} z_d\right\|_{L^2(\O_d^T)}.
	\end{alignat}
\end{subequations}
In view of \eqref{estim}, we can extract subsequences still denoted
by $\rho_\varepsilon^\gamma$, $\phi_\varepsilon^\gamma$, $\zeta_\varepsilon^\gamma$ and $\psi_\varepsilon^\gamma$
such that when $\varepsilon \rightarrow 0$, we obtain
\begin{subequations}\label{conver}
	\begin{alignat}{9}
	\rho_\varepsilon^\gamma&\rightharpoonup& \hat{\rho}^\gamma&\text{  weakly in }&L^2(Q),\\
	\psi_\varepsilon^\gamma&\rightharpoonup& \hat{\psi}^\gamma&\text{  weakly in }&L^2((0,T);H^1_k(\Omega)),\\
	\phi_\varepsilon^\gamma&\rightharpoonup& \hat{\phi}^\gamma&\text{ weakly  in }&L^2((0,T);H^1_k(\Omega)),\\
	\zeta_\varepsilon^\gamma&\rightharpoonup& \hat{\zeta}^\gamma&\text{  weakly in }&L^2((0,T);H^1_k(\Omega)).
	\end{alignat}
\end{subequations}

Using \eqref{conver}, we can prove by passing to the limit in systems \eqref{rhogammaeps}-\eqref{zetagammaeps} that the functions $\hat{\rho}^\gamma$, $\hat{\psi}^\gamma$, $\hat{\phi}^\gamma$ and
$\hat{\zeta}^\gamma$ satisfy \eqref{pas45}-\eqref{zetagamma}.
Moreover, using the weak lower semi-continuity of the norm and \eqref{18k}, we deduce from \eqref{mon10} the estimate \eqref{kgamma}.

\section{Conclusion remarks}  \label{conclusion}

In this work, we applied the Stackelberg strategy to control a parabolic equation, with distributed controls that are locally supported in space, under appropriate hypothesis. We considered a linear degenerate heat equation with missing initial condition, and we acted on our system via two controls: a leader and a follower. The Stackelberg method consisted in studying two main problems: a low-regret control problem for the follower, and a null controllability problem for the leader. 
The results obtained here can be extended  to more general degenerate population dynamics models.


\section{Appendix}

\subsection*{Proof of Theorem \ref{exis}}

\begin{proof}
	
	We proceed in three steps.\\	\noindent \textbf{Step 1.} We show the estimate \eqref{esty1y2}.
	Make the change of variable $z(t,x)=e^{-rt}y(t,x),\,\, (t,x)\in Q$,  for some $r>0$ where $y$ is solution to \eqref{eq}. We obtain that $z$ is solution to
	\begin{equation}\label{p1}
		\left\{
		\begin{array}{rllll}
			\dis z_t-(k(x)z_x)_x+a_0z+r z &=&(h\chi_{\omega}+v\chi_{\O})e^{-rt} \qquad &\mbox{in}& Q,\\
			\dis z(t,0)=z(t,1)&=&0& \mbox{on}& (0,T), \\
			\dis z(0,\cdot)&=&g&\mbox{in}&\Omega.
		\end{array}
		\right.
	\end{equation}
	If we multiply the first equation in \eqref{p1} by $z$ and integrate by parts over $Q$, we obtain
	\begin{equation*}
		\begin{array}{rll}
			\dis \int_{Q}z_tz\ dxdt-\int_{Q}(k(x)z_x)_x z\ dxdt+\int_{Q}rz^2\ dxdt=\dis -\int_{Q}a_0z^2\ dxdt
			\dis+ \int_{Q}z(h\chi_{\omega}+v\chi_{\O})e^{-rt}\ dxdt.
		\end{array}
	\end{equation*}
	This latter equality becomes
	\begin{equation}\label{estimation1A1}
		\begin{array}{rlll}
			&&\dis\frac{1}{2}\|z(T,\cdot)\|^2_{L^2(\Omega)}-\frac{1}{2}\|z(0,\cdot)\|^2_{L^2(\Omega)}+\|\sqrt{k(x)}z_x\|^2_{L^2(Q)}+r\| z\|^2_{L^2(Q)}\\
			&&\dis\leq \int_{Q}a_0z^2\ dxdt
			\dis+ \int_{Q}z(h\chi_{\omega}+v\chi_{\O})e^{-rt}\ dxdt.
		\end{array}
	\end{equation}
	We have 
	\begin{equation}\label{ze}
		\int_{Q}a_0z^2\ dxdt\leq \|a_0\|_{\infty}\| z\|^2_{L^2(Q)}.	
	\end{equation}
	Due to the fact that $\dis e^{-rt}\leq 1,\ \forall t\in [0,T]$, we get
	\begin{equation}\label{zea}
		\begin{array}{rlll}
			\dis \int_{Q}z(h\chi_{\omega}+v\chi_{\O})e^{-rt}\ dxdt&\leq&\dis
			\int_{Q}z(h\chi_{\omega}+v\chi_{\O})\ dxdt\\
			&\leq&\dis \| z\|^2_{L^2(Q)}+\frac{1}{2}\|v\|^2_{L^2(\O_T)}+\frac{1}{2}\|h\|^2_{L^2(\omega_T)}.
		\end{array}
	\end{equation}
	Combining \eqref{ze}-\eqref{zea} with \eqref{estimation1A1}, one obtains
	
	\begin{equation*}
		\begin{array}{rlll}
			&&\dis\frac{1}{2}\|z(T,\cdot)\|^2_{L^2(\Omega)}+\frac{1}{2}\|\sqrt{k(x)}z_x\|^2_{L^2(Q)}+\left(r-\|a_0\|_{\infty}-1\right)\| z\|^2_{L^2(Q)}\\
			&&\dis \leq\frac{1}{2}\|g\|^2_{L^2(\Omega)}+\frac{1}{2}\|v\|^2_{L^2(\O_T)}+\frac{1}{2}\|h\|^2_{L^2(\omega_T)}.
		\end{array}
	\end{equation*}
	Taking $r$ such that $\dis r=\|a_0\|_{\infty}+\frac{3}{2}$, we obtain
	\begin{equation*}
		\begin{array}{rlll}
			\dis\|z(T,\cdot)\|^2_{L^2(\Omega)}+\| z\|^2_{L^2((0,T);H^1_k(\Omega))}
			\dis \leq\|g\|^2_{L^2(\Omega)}+\|v\|^2_{L^2(\O_T)}+\|h\|^2_{L^2(\omega_T)}.
		\end{array}
	\end{equation*}
	Since $z=e^{-rt}y$, we deduce the existence  of a constant $C=C(T,\|a_0\|_{\infty})>0$ such that the following estimate holds:
	\begin{equation*}
		\begin{array}{llllll}
			\dis \|y(T,\cdot)\|^2_{L^2(\Omega)}+\|y\|^2_{L^2((0,T); H^1_k(\Omega))}
			\leq 
			C\left(\|v\|^2_{L^2(\O_T)}+\|h\|^2_{L^2(\omega_T)}+\|g\|^2_{L^2(\Omega)}\right)
		\end{array}
	\end{equation*} 	
	and we deduce the inequality \eqref{esty1y2}.
	
	\noindent \textbf{Step 2.} We prove existence by using Theorem \ref{Theolions61}. First of all, it is clear that for any $\phi\in \mathbb{V},$ we have
	$$\|\phi\|_{L^2((0,T);H^1_k(\Omega))}\leq \|\phi\|_{\mathbb{V}}.$$
	This shows that we have the continuous embedding $\mathbb{V}\hookrightarrow L^2((0,T);H^1_k(\Omega))$.
	
	Now, let $\phi \in \mathbb{V}$ and consider the bilinear form $\mathcal{A}(\cdot,\cdot)$ defined on $L^2((0,T);H^1_k(\Omega))\times \mathbb{V}$ by:
	\begin{equation}\label{defCalE}
		\begin{array}{lll}
			\mathcal{A}(y,\phi)&=&\dis -\int_Q y\phi_t \, \dq + \int_Q k(x)y_x\phi_x\,\dq+\int_{Q}a_0 y\phi\, \dq.
		\end{array}
	\end{equation}
	Using Cauchy Schwarz inequality and Remark \ref{rmktrace}, we get that
	$$\begin{array}{lllll}
		\dis |\mathcal{A}(y,\phi)|&\leq&\dis  \|y\|_{L^2(Q)}\|\phi_t\|_{L^2(Q)}+ \|\sqrt{k(x)}y_x\|_{L^2(Q)}\|\sqrt{k(x)}\phi_x\|_{L^2(Q)}+\|a_0\|_{\infty}\|y\|_{L^2(Q)}\|\phi\|_{L^2(Q)}\\
		&\leq& \dis \left[\|\phi_t\|^2_{L^2(Q)}+\|\sqrt{k(x)}\phi_x\|^2_{L^2(Q)}+\|a_0\|^2_{\infty}\|\phi\|^2_{L^2(Q)}\right]^{1/2}\|y\|_{L^2((0,T);H^1_k(\Omega))}.
	\end{array}
	$$
	This means that  there is a constant $C=C(\phi,\|a_0\|_{\infty})>0$  such that
	$$|\mathcal{A}(y,\phi)|\leq C\|y\|_{L^2((0,T);H^1_k(\Omega))}. $$
	Consequently, for every fixed $\phi\in \mathbb{V},$
	the functional  $y\mapsto \mathcal{A}(y,\phi)$ is continuous on $L^2((0,T);H^1_k(\Omega)).$
	
	Next,  we have that for every  $\phi\in\mathbb{V}$,
	\begin{equation}\label{xxl}
		\begin{array}{lllll}
			\mathcal{A}(\phi,\phi)&=&\dis -\int_Q \phi\phi_t \, \dq + \int_Q k(x)\phi_x^2\,\dq+\int_{Q}a_0 \phi^2\, \dq.
		\end{array}
	\end{equation}
	Due to Assumption \ref{aspt}, we get
	$$
	\int_{Q}a_0 \phi^2\, \dq\geq \alpha\|\phi\|^2_{L^2(Q)}.
	$$
	Combining the latter  inequality with \eqref{xxl}, we obtain
	$$
	\begin{array}{lllll}
		\mathcal{A}(\phi,\phi)
		&\geq&\dis \frac{1}{2}\|\phi(0,\cdot)\|^2_{L^2(\Omega)}
		+\dis  \frac{1}{2}\|\sqrt{k(x)}\phi_x\|^2_{L^2(Q)}+\alpha\| \phi\|^2_{L^2(Q)}\\
		&\geq& \dis \min\left\{\frac{1}{2},\alpha\right\}\|\phi\|^2_{\mathbb{V}}.
	\end{array}
	$$
	
	Finally, let us consider the linear functional $\mathcal{L}(\cdot):\mathbb{V}\to \R$ defined by
	$$\begin{array}{lllll}
		\mathcal{L}(\phi):=\dis \int_{Q} (h\chi_{\omega}+v\chi_{\O})\, \phi\; \dq
		+\int_{\Omega} g\,\phi(0,x)\ dx.
	\end{array}
	$$
	Then using Remark \ref{rmktrace}, we obtain
	$$\begin{array}{lll}
		|\mathcal{L}(\phi)|&\leq&\dis \|h\chi_{\omega}+v\chi_{\O}\|_{L^2(Q)}\|\phi\|_{L^2(Q)}+ \|g\|_{L^2(\Omega)}\|\phi(0,\cdot)\|_{L^2(\Omega)}\\
		&\leq&\dis (\|h\chi_{\omega}+v\chi_{\O}\|_{L^2(Q)}+ \|g\|_{L^2(\Omega)})\|\phi\|_{\mathbb{V}}\\
		&\leq & C\|\phi\|_{\mathbb{V}},
	\end{array}
	$$
	where $C= C(T,h,v)>0$.
	Therefore,  $\mathcal{L}(\cdot)$ is continuous on $\mathbb{V}$. Thus,  it follows from Theorem \ref{Theolions61} that there exists $y\in L^2((0,T);H^1_k(\Omega))$ such that
	\begin{equation}\label{formvar}
		\mathcal{A}(y,\phi)= \mathcal{L}(\phi),\quad \forall \phi \in \mathbb{V}.
	\end{equation}
	We have shown that the system \eqref{eq} has a solution $y\in L^2((0,T);H^1_k(\Omega))$ in the sense of Definition \ref{weaksolution}. In addition, using the first equation of \eqref{eq}, we deduce that $y_t\in L^2((0,T);(H^{1}_k(\Omega))^\prime)$. So $y \in W_k(0,T)$ and using \eqref{contWTA}, we have $y\in C([0,T],L^2(\Omega)$. Therefore,  it follows that $y\in \mathcal{H}$.\\
	
	\noindent \textbf{Step 3.} We prove uniqueness.
	Assume that there exist $y_1$ and $y_2$ solutions to \eqref{eq} with the same right hand side $h,\, v$ and initial datum $g$.  Set $z:=e^{-rt}(y_1-y_2)$. Then $z$ satisfies
	\begin{equation}\label{u0}
		\left\{
		\begin{array}{rllll}
			\dis z_t-(k(x)z_x)_x+a_0z+r z &=&0 \qquad &\mbox{in}& Q,\\
			\dis z(t,0)=z(t,1)&=&0& \mbox{on}& (0,T), \\
			\dis z(0,\cdot)&=&0&\mbox{in}&\Omega.
		\end{array}
		\right.
	\end{equation}
	So, if we  multiply the first equation in \eqref{u0} by $z$, and integrate by parts over $Q$, we obtain
	\begin{equation*}
		\begin{array}{rllll}
			\dis \dis\frac{1}{2}\|z(T,\cdot)\|^2_{L^2(\Omega)}+\frac{1}{2}\|\sqrt{k(x)}z_x\|^2_{L^2(Q)}+\left(r-\|a_0\|_{\infty}\right)\| z\|^2_{L^2(Q)}\leq 0.
		\end{array}
	\end{equation*}
	Choosing $\dis r=\|a_0\|_{\infty}+\frac{1}{2}$ in this latter inequality, we deduce that 
	\begin{equation*}
		\begin{array}{rllll}
			\|z\|^2_{L^2((0,T);H^1_k(\Omega))}\leq 0,
		\end{array}
	\end{equation*}
	which means that $z=0$ in $Q$ and consequently, $y_1=y_2$ in $Q$. Therefore, the solution to Problem \eqref{eq} is unique.
	This complete the proof.
\end{proof}

\bibliographystyle{plain}
\bibliography{references}

\end{document}